\newtheorem{theorem}{Theorem}[section]
\newtheorem{corollary}[theorem]{Corollary}
\newtheorem{proposition}[theorem]{Proposition}
\newtheorem{lemma}[theorem]{Lemma}
\theoremstyle{definition}
\newtheorem{definition}[theorem]{Definition}
\theoremstyle{definition}
\newtheorem{remark}[theorem]{Remark}
\newcommand\alphas{\mbox{\boldmath$\alpha$}}
\newcommand\betas{\mbox{\boldmath$\beta$}}
\newcommand\gammas{\mbox{\boldmath$\gamma$}}
\newcommand{\cM}{\mathcal{M}}
\newcommand{\del}{\partial}
\newcommand{\ZZ}{\mathbb{Z}}
\newcommand{\cD}{\mathcal{D}}
\newcommand{\cL}{\mathcal{L}}
\newcommand{\cI}{\mathcal{I}}
\newcommand{\cT}{\mathcal{T}}
\newcommand{\cA}{\mathcal{A}}
\newcommand{\cC}{\mathcal{C}}
\newcommand{\cK}{\mathcal{K}}
\newcommand{\cB}{\mathcal{B}}
\newcommand{\cH}{\mathcal{H}}
\newcommand{\cG}{\mathcal{G}}
\newcommand{\CP}{\mathbb{CP}}
\begin{document}

\title[{Trisections, intersection forms, and the Torelli group}]{Trisections, intersection forms and the Torelli group}

\author[P. Lambert-Cole]{Peter Lambert-Cole}
\address{School of Mathematics \\ Georgia Institute of Technology}
\email{plc@math.gatech.edu}
\urladdr{\href{https://www.math.gatech.edu/~plambertcole3/}{https://www.math.gatech.edu/\~{}plambertcole3}}

\keywords{4-manifolds}
\subjclass[2010]{57M27}
\maketitle


\begin{abstract}

We apply mapping class group techniques and trisections to study intersection forms of smooth 4-manifolds.  Johnson defined a well-known homomorphism from the Torelli group of a compact surface.  Morita later showed that every homology 3-sphere can be obtained from the standard Heegaard decomposition of $S^3$ by regluing according to a map in the kernel of this homomorphism.  We prove an analogous result for trisections of 4-manifolds.  Specifically, if $X$ and $Y$ admit handle decompositions without 1- or 3-handles and have isomorphic intersection forms, then a trisection of $Y$ can be obtained from a trisection of $X$ by cutting and regluing by an element of the Johnson kernel.  We also describe how invariants of homology 3-spheres can be applied, via this result, to obstruct intersection forms of smooth 4-manifolds.  As an application, we use the Casson invariant to recover Rohlin's Theorem on the signature of spin 4-manifolds.

\end{abstract}

\section{Introduction}

Every closed, oriented 3-manifold $Y$ admits a Heegaard decomposition into the union of two genus-$g$handlebodies.  Such a decomposition can be obtained from the standard Heegaard splitting of $S^3$ by cutting and regluing by some element of the mapping class group $\cM(\Sigma)$ of a closed, genus-$g$ surface.  When $Y$ is an integral homology 3-sphere, the map can be chosen in the Torelli group $\cI_g$, which consists of mapping classes that act trivially on homology.  This connection has proven enormously useful in understanding both the mapping class group and integral homology 3-spheres.

Trisections of smooth 4-manifolds are analogous to Heegaard splittings in dimension 3.  Specifically, a $(g;k_1,k_2,k_3)$-{\it trisection} $\cT$ of a smooth 4-manifold $X$ is a decomposition $X = Z_1 \cup Z_2 \cup Z_3$ such that
\begin{enumerate}
\item the sector $Z_{\lambda}$ is diffeomorphic to the 4-dimensional 1-handlebody $\natural_{k_{\lambda}} S^1 \times B^3$,
\item each double intersection $H_{\lambda} = Z_{\lambda-1} \cap Z_{\lambda}$ is diffeomorphic to the 3-dimensional 1-handlebody $\natural_{g} S^1 \times B^2$,
\item the triple intersection $\Sigma_g = Z_1 \cap Z_2 \cap Z_3$ is a closed, oriented surface of genus $g$,
\end{enumerate} 
It follows immediately from the definition that $H_{\lambda} \cup_{\Sigma} H_{\lambda+1}$ is a genus $g$ Heegaard splitting of the boundary $\del Z_{\lambda} = \#_{k_{\lambda}} S^1 \times S^2$.  Conversely, take a genus-$g$ handlebody $H_g$ and three homeomorphisms $\{ \phi_{\lambda}: \del H_g \rightarrow \Sigma_g \}$.  Suppose the union
\[Y_{\lambda} = H_g \cup_{\phi_{\lambda}\phi^{-1}_{\lambda+1}} (-H_g)\]
is homeomorphic to $\#_{k_{\lambda}} S^1 \times S^2$ for $\lambda = 1,2,3$.  Then a result of Laudenbach and Po\'enaru \cite{Laudenbach-Poenaru} states that we can build a unique smooth 4-manifold, up to diffeomorphism, by gluing three handlebodies $H_1,H_2,H_3$ to $\Sigma_g$ via the homeomorphisms $\phi_1,\phi_2,\phi_3$ and then capping off with three 4-dimensional 1-handlebodies.  

\pagebreak

Given the analogy between Heegaard splittings and trisections, it is natural to ask: 
\begin{enumerate}
\item Do results about Heegaard splittings and 3-manifolds extend to 4-dimensions?  
\item Can we apply 3-dimensional techniques to answer questions in 4-manifold topology?
\end{enumerate}

Of specific interest in this paper is the work of Birman and Craggs \cite{Birman-Craggs}, Johnson \cite{Johnson-BC,Johnson2,Johnson3}, Casson, and Morita \cite{Morita-1,Morita-2} that relates the Torelli group to invariants of integral homology 3-spheres.  Birman and Craggs used the Rohlin invariant of homology 3-spheres to define a collection of homomorphisms from the Torelli group to $\ZZ/2 \ZZ$.  Johnson later completely classified these homomorphisms and subsequently used these maps, along with his namesake homomorphism, to determine the abelianization of Torelli group.  Casson lifted the Rohlin invariant and defined a famous $\ZZ$-valued invariant of homology 3-spheres.  Morita then applied Johnson's work to reinterpret the Casson invariant in terms of the mapping class group.  In particular, every integral homology 3-sphere can be obtained by taking the standard Heegaard decomposition of $S^3$, cutting along the Heegaard surface, and regluing by an element of $\cK_g$, the kernel of the Johnson homomorphism.  As Johnson showed, this subgroup is precisely the subgroup generated by separating twists.  The change in the Casson invariant can then be computed using the surgery formula.

The main result of this paper is the following 4-dimensional analogue of Morita's result.
\begin{theorem}
\label{thrm:homeo-Johnson}
Let $X$ and $Y$ be homeomorphic, closed smooth 4-manifolds.  Suppose that $X$ and $Y$ admit $(g;k,0,0)$-trisections $\cT_X$ and $\cT_Y$, respectively.  Then $\cT_Y$ can be obtained from $\cT_X$ by cutting and regluing by an element of the Johnson kernel $\cK_g$.
\end{theorem}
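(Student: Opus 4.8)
The plan is to reduce the theorem to a statement about Heegaard splittings of the sectors' boundaries and then apply Morita's theorem fiberwise. Recall that a $(g;k,0,0)$-trisection is determined by a triple of cut systems $(\alphas,\betas,\gammas)$ on $\Sigma_g$, where $(\Sigma_g,\alphas,\betas)$ is a genus-$g$ Heegaard diagram for $\#_k S^1\times S^2$, and $(\Sigma_g,\betas,\gammas)$ and $(\Sigma_g,\gammas,\alphas)$ are both Heegaard diagrams for $S^3$. Equivalently, fixing a standard handlebody $H_g$ and the standard parametrization of $\partial H_g = \Sigma_g$, the trisection data is captured by the gluing maps $\phi_1,\phi_2,\phi_3 \in \cM(\Sigma_g)$ as in the excerpt, taken up to the appropriate equivalence (stabilization, handlebody-group actions on each side, and global conjugation). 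The goal is to show that, after these allowed moves, we can arrange $\cT_Y$ and $\cT_X$ to agree except in the $H_1 = H_3 \cap H_1$ wall, where they differ by a single map in $\cK_g$.

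First I would use the hypothesis $k_2 = k_3 = 0$: the sectors $Z_2$ and $Z_3$ each have boundary $S^3$, so the pairs $(\betas,\gammas)$ and $(\gammas,\alphas)$ are \emph{genus-$g$ Heegaard diagrams for $S^3$}. Since any two genus-$g$ Heegaard splittings of $S^3$ are isotopic (Waldhausen), I can act by the handlebody groups on the $\gamma$- and $\alpha$-sides to \emph{normalize} both $Z_2$ and $Z_3$ to the standard picture, reducing all the nontriviality of the trisection into the $\betas$-curves — equivalently, into the single gluing map $\psi_X := \phi_3\phi_1^{-1} \in \cM(\Sigma_g)$ that reconstructs $H_1 = Z_3\cap Z_1$, and which must satisfy that $H_g\cup_{\psi_X}(-H_g) \cong \#_k S^1\times S^2$. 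Do the same for $\cT_Y$, producing $\psi_Y$. Because $X$ and $Y$ are homeomorphic — and here is where I would invoke a classification input, presumably Freedman's theorem together with the fact that the relevant $X,Y$ have handle decompositions without $1$- or $3$-handles so that their intersection forms (plus Kirby–Siebenmann, which vanishes in the simply-connected case after the necessary genus considerations) determine them up to homeomorphism — the two resulting $4$-manifolds built from $\psi_X$ and $\psi_Y$ over the normalized $Z_2,Z_3$ are homeomorphic, and in fact the normalized trisection is essentially recording the intersection form on the $\alpha$-$\beta$ homology pairing.

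Next I would run the $3$-dimensional argument on the wall: both $\psi_X$ and $\psi_Y$ yield the \emph{same} $3$-manifold $\#_k S^1\times S^2$ upon regluing, and after acting by the handlebody group on the bounding $H_g$ (which is still free to use) I can assume both are \emph{homologically standard}, i.e.\ act on $H_1(\Sigma_g;\ZZ)$ the same way. Then $\psi_Y\psi_X^{-1}$ acts trivially on $H_1(\Sigma_g;\ZZ)$, so it lies in the Torelli group $\cI_g$; the refinement to the Johnson kernel $\cK_g$ is exactly the content of Morita's argument — the obstruction to lying in $\cK_g$ is the Johnson homomorphism, which is detected by the Rohlin/Birman–Craggs invariant of the resulting homology sphere, and since both regluings give the same manifold $\#_k S^1\times S^2$ this obstruction vanishes. (This step needs care because the ambient manifold is $\#_k S^1\times S^2$ rather than a homology sphere, so I would either pass to the appropriate $\ZZ$-homology statement or quote the version of Johnson's classification valid for these reducible manifolds.)

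The main obstacle I anticipate is the \emph{interaction between the three walls}: normalizing $Z_2$ and $Z_3$ simultaneously uses up the handlebody-group freedom on the $\gamma$- and $\alpha$-sides, and I must check that this does not over-constrain the picture — in particular that after normalization the residual gluing $\psi$ really does admit a representative differing between $X$ and $Y$ by a Torelli element (and not merely by a mapping class that acts correctly on homology only after re-stabilizing). Handling this cleanly will likely require a stabilization step: pass to a common stabilization of $\cT_X$ and $\cT_Y$ (adding trivial $(1;0,0,0)$ and $(1;1,0,0)$ pieces as needed) so that there is enough room to isotope the normalizations into place, then carefully track that the stabilized gluing maps differ by an element of $\cK_g$ and that this descends. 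The rest — translating "homeomorphic with no $1$- or $3$-handles and isomorphic intersection forms" into the statement that the normalized $Z_2\cup Z_3$ pieces match — is essentially bookkeeping with the handle decomposition and Freedman's classification.
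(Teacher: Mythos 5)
There is a genuine gap, in fact two. First, your normalization step is not available. You propose to use Waldhausen's theorem to make \emph{both} $S^3$ walls, i.e.\ both pairs $(\betas,\gammas)$ and $(\gammas,\alphas)$, simultaneously geometrically standard, pushing all of the trisection data into a single residual gluing map. Waldhausen standardizes each pair only up to its own $\cG_2$-action, and the two $\cG_2$-actions overlap on the shared handlebody $H_{\gamma}$ (and on the global reparametrization of $\Sigma$), so the two normalizations cannot in general be performed at once; the paper's remark after Corollary \ref{cor:Waldhausen} makes exactly this point, noting that triples in which every pair is standard exist only for connected sums of $\CP^2$, $\overline{\CP}^2$ and $S^2\times S^2$. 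The paper's actual route standardizes only the pair $(\alphas,\betas)$ geometrically and controls $\gammas$ purely homologically, via Theorem \ref{thrm:q-formula} (FKSZ) and Proposition \ref{prop:cut-system-torelli}, which is what yields the Torelli-level comparison (Corollary \ref{cor:trisection-torelli-equiv}); your stronger geometric normalization would need a separate (and probably false) argument.

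Second, and more seriously, your passage from $\cI_g$ to $\cK_g$ is not a valid argument. You assert that the obstruction to lying in $\cK_g$ is the Johnson homomorphism and that it "vanishes" because both regluings produce the same $3$-manifold, being detected by the Rohlin/Birman--Craggs invariants. This is incorrect on two counts: $\tau$ takes values in the torsion-free group $\Lambda^3 H_1(\Sigma)$ and is not determined by the homeomorphism type of the reglued manifold, and the Birman--Craggs maps are $\ZZ/2$-valued homomorphisms independent of $\tau$, so they cannot detect it. Morita's theorem is not proved by showing an obstruction vanishes; it is proved by showing that a given Torelli element can be multiplied by Torelli elements that extend across the relevant handlebodies (hence do not change the decomposition) so as to kill its Johnson image, i.e.\ by a surjectivity statement for $\tau$ restricted to extendable subgroups. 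In the trisection setting this is precisely where new work is required, because the only absorbable elements are those extending over \emph{both} $H_{\alpha}$ and $H_{\beta}$ simultaneously, together with those extending over $H_{\gamma}$: the paper's Proposition \ref{prop:Johnson-surjective} proves $\tau(\cT\cA\cB)+\tau(\cT\cC)=\Lambda^3 H_1(\Sigma)$ by explicitly exhibiting $3$-chain bounding-pair maps that bound in the appropriate handlebodies (Lemma \ref{lemma:3-chain-twist}), and Lemma \ref{lemma:AB-equiv} is then used to absorb the correction without changing the trisection. Your proposal contains no substitute for this surjectivity computation, which is the heart of the proof.
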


Note that the condition that $X$ and $Y$ admit $(g;k,0,0)$-trisections is equivalent to the condition that they are simply-connected and admit perfect Morse functions (see Theorem \ref{thrm:trisection-existence}).

\subsection{Intersection forms}

A second goal is this paper is to describe a way in which, via trisections, mapping class group techniques may be used to obstruct intersection forms.  Let $X$ be a 4-manifold.  Multiplication in the cohomology ring $H^*(X;\ZZ)$ determines a symmetric, bilinear pairing $Q: H^2(X,\ZZ) \times H^2(X,\ZZ) \rightarrow H^4(X,\ZZ) \cong \ZZ$.  When $X$ is simply-connected, it is a result of Wall that $Q$ determines the homotopy-type of $X$ and a result of Freedman that $Q$ determines the homeomorphism type of $X$. Every unimodular quadratic form over $\ZZ$ is the intersection form of a topological 4-manifold.  However, this is not true for smooth 4-manifolds and determining which quadratic forms can be realized is an important open problem in 4-manifold theory.  

There is an extra subtlety that distinguishes trisections from Heegaard splittings.  It is not true that an arbitrary choice of gluing maps $\{\phi_{\lambda}\}$ determines a closed 4-manifold.  The pairwise union $Y_{\lambda} = H_g \cup_{\phi_{\lambda}\phi^{-1}_{\lambda+1}} (-H_g)$ may not be homeomorphic to $\#_{k_{\lambda}} S^1 \times S^2$, let alone have the same integral homology.  This gluing data only determines a more general {\it Heegaard triple} and only when each $Y_{\lambda}$ has the correct homeomorphism type does this construction describe a unique, closed 4-manifold.  When $Y_1$ is homeomorphic to $\#_k S^1 \times S^2$ and $Y_2$ and $Y_3$ are integral homology 3-spheres, we refer to the triple $\{\phi_{\lambda}\}$ as a {\it pseudotrisection}.

While this appears unfortunate, in fact it is an opportunity to show that some quadratic forms cannot be realized as the intersection form of any closed, smooth, oriented 4-manifold.  Let $\{\phi_{\lambda}\}$ be a triple of gluing maps.  Each map induces a map on homology $(\phi_{\lambda}^{-1})_*: H_1(\Sigma) \rightarrow H_1(H_g)$.  The kernel of this map is a $g$-dimensional subspace $L_{\lambda}$ that is Lagrangian with respect to the intersection form on $H_1(\Sigma)$.  As shown by Feller, Klug, Schirmer and Zemke \cite{FKSZ}, the triple of Lagrangian subspaces determines the cohomology ring of the induced 4-manifold.  Moreover, even if the triple $\{\phi_{\lambda}\}$ is only a pseudotrisection, we can formally compute the `intersection form' using the same algebraic formula.

We then have a strategy to obstruct $Q$ as the intersection form of smooth 4-manifold.  Take the set of all pseudotrisections $\{\phi_{\lambda}\}$ whose `intersection form' is $Q$.  As in Theorem \ref{thrm:homeo-Johnson}, any two pseudotrisections with the same `intersection form' are related by an element of the Johnson kernel.  To exclude $Q$, it now suffices to show that after cutting and regluing by any element $\rho$ in $\cK_g$, at least one of the resulting 3-manifolds $Y_{\lambda,\rho}$ is always a nontrivial homology 3-sphere.

\subsection{Rohlin's Theorem}

A standard fact in 4-manifold topology is Rohlin's Theorem.

\begin{theorem}[Rohlin \cite{Rohlin}]
\label{thrm:Rohlin}
Let $X$ be a closed, spin 4-manifold.  Then $\sigma(X) = 0 \text{ mod } 16$.
\end{theorem}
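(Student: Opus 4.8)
Suppose, for contradiction, that $X$ is a closed spin $4$-manifold with $\sigma(X) \not\equiv 0 \pmod{16}$; then $\sigma(X) \equiv 8 \pmod{16}$. The plan is to run the strategy of the introduction --- now with a genuine trisection in hand, so as to pin down the signature rather than exclude a form --- using the Casson invariant as the obstructing $3$-manifold invariant. First reduce to a convenient model: surgering embedded circles carrying generators of $\pi_1(X)$, with framings chosen so the result stays spin, produces a simply-connected closed spin $4$-manifold with the same signature; standard handle-trading and stabilization by copies of $S^2 \times S^2$ (spin, signature $0$) then yield a perfect Morse function, hence by Theorem \ref{thrm:trisection-existence} a $(g;k,0,0)$-trisection; and connect sums with copies of the $K3$ surface (with either orientation) --- which admit perfect Morse functions, hence such trisections --- reduce the intersection form, via the classification of even unimodular forms, to $\pm E_8 \oplus qH$. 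So we may assume $X$ admits a $(g;k,0,0)$-trisection $\cT_X = Z_1 \cup Z_2 \cup Z_3$ with $\sigma(X) = \pm 8$, and it suffices to derive a contradiction. For such a trisection the boundary pieces are $Y_1 = \partial Z_1 = \#_k S^1 \times S^2$ and $Y_2 = \partial Z_2 = Y_3 = \partial Z_3 = S^3$, each of which bounds a spin $4$-manifold with vanishing Rohlin invariant.

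By the theorem of Feller--Klug--Schirmer--Zemke \cite{FKSZ}, the formal intersection form attached to gluing data $\{\phi_\lambda\}$ --- and in particular its signature --- depends only on the triple of Lagrangians $(L_1,L_2,L_3) \subset H_1(\Sigma_g)$. View $\cT_X$ as such gluing data and cut and reglue by $\rho \in \cK_g$: since $\cK_g \subseteq \cI_g$ fixes $H_1(\Sigma_g)$, the Lagrangians --- hence the signature of the formal intersection form --- are unchanged, while the boundary pieces become homology spheres $Y_{2,\rho}, Y_{3,\rho}$ and a manifold $Y_{1,\rho}$ with the homology of $\#_k S^1 \times S^2$, each obtained from the corresponding $Y_\lambda$ by regluing a Heegaard splitting by an element of the Johnson kernel. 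The technical core is the identity
\[
\tfrac18\,\sigma(X)\ \equiv\ \mu(Y_{1,\rho}) + \mu(Y_{2,\rho}) + \mu(Y_{3,\rho}) \pmod 2,
\]
where $\mu$ is the Rohlin invariant (set to $0$ on summands of $S^1 \times S^2$). One establishes it by verifying it directly for an explicit model pseudotrisection with formal intersection form $\pm E_8 \oplus qH$ --- so that the right-hand side becomes a concrete computation of Rohlin invariants of the model's boundary pieces --- and then showing it is preserved under regluing by $\cK_g$; by the pseudotrisection analogue of Theorem \ref{thrm:homeo-Johnson}, every pseudotrisection with this formal form is obtained from the model in this way, so the identity holds for all of them, in particular for $\cT_X$. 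Since the left-hand side is manifestly $\rho$-independent, what must be shown is that the sum of the three Rohlin invariants is $\cK_g$-invariant even though the individual terms are not; this is exactly where the Casson invariant is needed, as each $\mu(Y_{\lambda,\rho})$ is the mod-$2$ reduction of the Casson invariant of $Y_{\lambda,\rho}$, whose dependence on $\rho$ is governed by Morita's form of the Casson surgery formula \cite{Morita-1}, and the three Heegaard embeddings of a trisection are coherent enough that these dependences cancel in the sum. Applying the identity with $\rho = \mathrm{id}$, all of $Y_1, Y_2, Y_3$ have vanishing Rohlin invariant, so $\sigma(X) \equiv 0 \pmod{16}$ --- contradicting $\sigma(X) = \pm 8$.

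The main obstacle is the displayed identity: it is the $4$-dimensional analogue of Morita's surgery formula, and proving it requires expressing the signature of the formal intersection form through Wall-type correction terms for the trisection decomposition (equivalently, a triple Maslov index of $(L_1,L_2,L_3)$), identifying the mod-$2$ part of those terms with values of Birman--Craggs homomorphisms on the gluing maps, and verifying the cocycle relation that makes their sum depend only on the unchanging Lagrangian data; the Casson invariant supplies the integral lift that keeps this step robust under regluing and independent of Rohlin's theorem itself. A secondary, more routine point is checking that the initial reductions --- surgery along $\pi_1(X)$, stabilization by $S^2 \times S^2$, and the quadratic-form simplification --- stay within the spin category, preserve $\sigma(X) \bmod 16$, and culminate in a $4$-manifold admitting a $(g;k,0,0)$-trisection.
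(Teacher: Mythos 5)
Your overall architecture matches the paper's: reduce to a spin 4-manifold with a $(g;k,0,0)$-trisection and even indefinite form, build a model pseudotrisection realizing $mE_8\oplus nH$ whose boundary pieces have known mod-2 Casson invariants, use the Torelli/Johnson-kernel standardization to say every honest trisection with that form is a $\cK_g$-regluing of the model, and conclude from the $\cK_g$-invariance of the sum of the mod-2 Casson invariants of the boundary pieces. But the step you yourself flag as ``the main obstacle'' is exactly the mathematical content, and your proposal does not supply it. You assert that the individual terms $\mu(Y_{\lambda,\rho})$ change under regluing but that ``the three Heegaard embeddings of a trisection are coherent enough that these dependences cancel in the sum,'' and you gesture at Wall correction terms, a triple Maslov index, Birman--Craggs homomorphisms and a cocycle relation. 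None of that is carried out, and, more importantly, nothing in your sketch uses the hypothesis that the intersection form is even at this step --- yet the cancellation is \emph{false} without it (the standard genus-1 diagram of $\CP^2$ already shows the two quadratic enhancements differ when $Q$ has odd diagonal). The paper's mechanism is concrete: an element of $\cK_g$ is a product of separating twists, so $Y_{2,\rho}$ and $Y_{3,\rho}$ are obtained from $Y_2,Y_3$ by surgery on boundary links; the Casson surgery formula turns the change in $\mu$ into a sum of Arf invariants of the separating curves viewed in $Y_2$ and in $Y_3$; and Lemmas \ref{lemma:Arf-quadratic} and \ref{lemma:same-Arf} (computed from the linking forms $l_2,l_3$ on the central surface) show these Arf invariants agree precisely because every diagonal entry $Q_{i,i}$ is even. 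That is where spin enters, and it is the ingredient your argument is missing.

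Two smaller points. First, your opening appeal to ``each boundary piece bounds a spin 4-manifold with vanishing Rohlin invariant'' is circular as stated, since well-definedness of the spin-bounding Rohlin invariant is usually deduced from Rohlin's theorem; the paper avoids this by working throughout with the mod-2 Casson invariant and its surgery formula, which is what you should do consistently (you say this later, but the identity should be phrased from the start in those terms). Second, your reduction via surgeries on $\pi_1$, $S^2\times S^2$-stabilizations and $K3$ connected sums to $\pm E_8\oplus qH$ is workable but heavier than needed; the paper's Lemma \ref{lemma:spin-cobordant} does the whole reduction with one 5-dimensional surgery argument, and then proves the general statement $\mu(Y_{2,\rho})+\mu(Y_{3,\rho})\equiv m \pmod 2$ for $mE_8\oplus nH$ rather than normalizing to $m=\pm1$. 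Also note that in the paper's setup the regluing acts only on the $\gamma$-handlebody, so $Y_1$ is untouched and does not belong in the sum.
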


There exist several alternate proofs of this result in the literature (see \cite{Kervaire-Milnor,Freedman-Kirby,Kirby-book,Lawson-Michelsohn,Kirby-Melvin}).    We present a new proof here using the Casson invariant $\lambda$ of homology 3-spheres, whose reduction mod 2 is better known as the Rohlin invariant $\mu$.  A standard approach uses Theorem \ref{thrm:Rohlin} to show that $\mu$ is well-defined.  However, using trisections we can reverse the logical causality and apply the mod 2 Casson invariant to exclude spin 4-manifolds whose signature is $8 \text{ mod }16$.

By standard surgery theory techniques, every spin 4-manifold is spin-cobordant to a simply-connected 4-manifold $X$ that has indefinite intersection form and a handle decomposition without $1-$ and $3$-handles (Lemma \ref{lemma:spin-cobordant}).   Since $X$ is spin, the $2^{\text{nd}}$ Stiefel--Whitney class $w_2(X)$ vanishes and so the intersection form is even.  By the classification of unimodular, indefinite and even symmetric bilinear forms, the intersection form $Q_X$ is isomorphic to $m E_8 \oplus n H$, where a negative value of $m$ corresponds to summands of $-E_8$.  Rohlin's Theorem is then equivalent to the statement that $m$ must be even.  Thus, it suffices to prove the following theorem.

\begin{theorem}
\label{thrm:gsc-Rohlin}
Suppose that  $X$ be a spin 4-manifold that admits a $(g;k,0,0)$-trisection.  Then the intersection form of $X$ has the form
\[Q_X \cong 2m E_8 \oplus n H.\]
for some integer $m$.
\end{theorem}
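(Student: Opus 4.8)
The plan is to attach to each (pseudo)trisection a $\ZZ/2$-valued quantity, built from the Casson--Rohlin invariants of its three double intersections, show that it depends only on the (formal) intersection form, and then evaluate it in two ways. Since $X$ is spin and, by Theorem~\ref{thrm:trisection-existence}, simply connected with a perfect Morse function, its intersection form is even and unimodular, so $Q_X\cong mE_8\oplus nH$ for integers $m,n$ (if $Q_X$ is definite it vanishes, by Donaldson's theorem, and there is nothing to prove), and it suffices to show that $m$ is even. I use Casson's invariant $\lambda$ of integral homology $3$-spheres together with its surgery formula, which are defined via $SU(2)$-representation varieties and are available independently of Rohlin's Theorem~\ref{thrm:Rohlin}; write $\mu:=\lambda\bmod 2$ for the Rohlin invariant.

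\emph{An invariant of the intersection form.} For a (pseudo)trisection $\cT$, let $Y_\lambda=\partial Z_\lambda=H_\lambda\cup_{\Sigma_g}H_{\lambda+1}$ be the three boundary $3$-manifolds (each carrying a genus-$g$ Heegaard splitting), and set $I(\cT)=\mu(Y_1)+\mu(Y_2)+\mu(Y_3)\in\ZZ/2$, where for $Y_\lambda=\#_{k_\lambda}S^1\times S^2$ we take $\mu$ of the spin structure bounding $\natural_{k_\lambda}S^1\times B^3$ (namely $0$). By Feller--Klug--Schirmer--Zemke~\cite{FKSZ} the formal intersection form of $\cT$ is determined by the Lagrangian triple $(L_1,L_2,L_3)$, and by the pseudotrisection version of Theorem~\ref{thrm:homeo-Johnson} any two pseudotrisections with the same formal intersection form are related, after stabilization, by cutting along the central surface and regluing by an element $\rho\in\cK_g$; stabilization changes no $Y_\lambda$. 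By Johnson~\cite{Johnson3}, $\cK_g$ is generated by separating twists, so it is enough to treat one such twist $\tau_\gamma$. Regluing by $\tau_\gamma$ alters exactly the two of the $Y_\lambda$ whose Heegaard gluing involves the modified identification map, and by the work of Birman--Craggs~\cite{Birman-Craggs}, Johnson~\cite{Johnson-BC}, and Morita~\cite{Morita-1,Morita-2} the induced change in $\mu$ depends only on the curve $\gamma$ in the central surface (through a Birman--Craggs--Johnson-type homomorphism on $\cK_g$), hence is the same for both affected $Y_\lambda$. The two changes cancel in $I(\cT)$, so $I$ descends to a function $I(Q)$ of the formal intersection form.

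\emph{Evaluating $I$.} Since double intersections connect-sum and $\mu$ is additive, $I$ is additive under connected sum. A $(2;0,0,0)$-trisection of $S^2\times S^2$ realizes $Q=H$ with $Y_1=Y_2=Y_3=S^3$, so $I(H)=0$. From the $E_8$-plumbing $W_{E_8}$ (one $0$-handle, eight $2$-handles, with boundary the Poincar\'e sphere $P=\Sigma(2,3,5)$) one builds a pseudotrisection with formal intersection form $E_8$ exactly one of whose $Y_\lambda$ is $P$ and the other two $S^3$; since $\mu(P)=\lambda(P)\bmod 2$ and $\lambda(P)=\pm1$ by Casson's surgery formula (as $P$ is $\pm1$-surgery on a trefoil), $I(E_8)=1$. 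Hence $I(mE_8\oplus nH)\equiv m\pmod 2$. On the other hand, $\cT_X$ is a pseudotrisection with formal intersection form $Q_X\cong mE_8\oplus nH$, so $I(\cT_X)\equiv m\pmod 2$; but since $\cT_X$ is a $(g;k,0,0)$-trisection, $Y_2=\partial Z_2=S^3$, $Y_3=\partial Z_3=S^3$, and $Y_1=\partial Z_1=\#_kS^1\times S^2$, giving $I(\cT_X)=0$. Therefore $m$ is even and $Q_X\cong 2m'E_8\oplus nH$.

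\emph{The main obstacle.} The crux is the $\cK_g$-invariance of $I$: one must verify precisely that cutting and regluing a trisection by a separating twist modifies exactly the two Heegaard splittings adjacent to the modified gluing, by the same surgery on a push-off of the twisting curve, and that the induced changes in the Rohlin invariant agree --- for which Morita's mapping-class-group description of the Casson invariant~\cite{Morita-1,Morita-2} is the natural tool. A related subtlety is extending $\mu$ to the non-homology-sphere pieces $Y_1$ occurring at intermediate stages of a separating-twist factorization of $\rho$, together with the correct choice of spin structure; this can be sidestepped by working with $\mu(Y_2)+\mu(Y_3)$ alone, provided one checks that the regluing in Theorem~\ref{thrm:homeo-Johnson} can be taken to modify the one identification map not used by $Y_1$. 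One must also construct the $E_8$-model pseudotrisection and confirm that precisely one of its double intersections is the Poincar\'e sphere.
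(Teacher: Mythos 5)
Your overall architecture matches the paper's: build model pseudotrisections realizing $mE_8\oplus nH$ whose two homology-sphere pieces have $\mu$-sum $\equiv m$, invoke the Johnson-kernel regluing statement, control the change of $\mu$ under separating twists via the Casson surgery formula and Arf invariants, and compare with an honest $(g;k,0,0)$-trisection where those pieces are $S^3$. But the step you yourself flag as the crux is where your justification fails. You assert that the change in $\mu$ caused by regluing along a separating curve ``depends only on the curve $\gamma$ in the central surface (through a Birman--Craggs--Johnson-type homomorphism), hence is the same for both affected $Y_\lambda$.'' That is not true in general: the Birman--Craggs homomorphisms depend on the \emph{embedding} of the surface as a Heegaard surface, not just on the abstract curve. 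Concretely, the change in $\mu$ is the Arf invariant of the knot obtained by pushing the separating curve into $Y_2$ (resp.\ $Y_3$), computed from the quadratic enhancement $q_2$ (resp.\ $q_3$) of the mod $2$ linking form of the Seifert surface cut off by the curve, and $q_2\neq q_3$ in general: the paper computes $q_2([x_i])=Q_{i,i}\bmod 2$ while $q_3([x_i])=0$ (Lemma~\ref{lemma:Arf-quadratic}, Propositions~\ref{prop:linking-l2} and following). So the cancellation you need is exactly where the spin hypothesis must enter, and your argument never uses evenness of $Q$ at this point --- as written it would apply verbatim to odd forms, which is a sign something is missing.

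The paper closes this gap with Lemmas~\ref{lemma:Arf-quadratic} and~\ref{lemma:same-Arf}: when $Q$ is even, the two quadratic enhancements $q_2$ and $q_3$ on $H_1(\Sigma;\ZZ/2\ZZ)$ coincide, hence the Arf invariants of the two knots induced by the same separating curve agree, hence $\mu(Y_{2,\phi})+\mu(Y_{3,\phi})$ is unchanged under regluing by any product of separating twists. To repair your proof you must insert this computation (or an equivalent comparison of the two linking forms of the central surface in $Y_2$ and $Y_3$). Two smaller points: the paper avoids your bookkeeping with $Y_1$ and its spin structure entirely by working with $\mu(Y_2)+\mu(Y_3)$ only, which is legitimate because Theorem~\ref{thrm:torelli-trisection} produces a regluing acting on the $\gammas$-curves alone, so $Y_1=Y_{\alpha\beta}$ is literally unchanged (no stabilization needed); and your appeal to Donaldson's theorem for the definite case is an avoidable heavy import --- the paper instead reduces Rohlin's theorem to the indefinite case by a spin cobordism (Lemma~\ref{lemma:spin-cobordant}) before applying the theorem, and the theorem itself is proved for forms already presented as $mE_8\oplus nH$.
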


The first step is find a triple $\{\phi_{\lambda}\}$ of maps that determine $(8;0,0,0)$ pseudotrisection with intersection form $E_8$ (Figure \ref{fig:pseudoE8}).  The resulting 3-manifold $Y_3$ can be immediately identified as the Poincar\'e homology sphere, whose Casson invariant is $-1$.  In particular, since $Y_2$ is homeomorphic to $S^3$, we see that
\[\mu(Y_2) + \mu(Y_3) = 1 \text{ mod } 2\]
Then, we use the surgery formulas for the Casson invariant to prove that
\[\mu(Y_{2,\rho}) + \mu(Y_{3,\rho}) = 1 \text{ mod } 2\]
for all $\rho \in \cK_{8}$. More generally, if we start with a $(g;k,0,0)$ pseudotrisection with intersection form $m E_8 \oplus n H$, we show that
\[\mu(Y_{2,\rho}) + \mu(Y_{3,\rho}) = m \text{ mod }2\]
for any $\rho \in \cK_{g}$.  This suffices to prove Theorem \ref{thrm:gsc-Rohlin} and therefore Rohlin's Theorem.

It would be interesting to apply a stronger invariant of homology 3-spheres, in order to obstruct other intersection forms.  A naive hope is that, since the Casson invariant counts $SU(2)$ representations of the fundamental group and Donaldson invariants count $SU(2)$ instantons, one might recover Donaldson's Diagonalization theorem \cite{Donaldson}.  However, the full Casson invariant does not appear to add more information than its mod 2 version.  For example, it does not obstruct the intersection forms $E_8 \oplus E_8$ or $E_8 \oplus \langle 1 \rangle$, which are definite but not diagonal and therefore excluded by Donaldson.

\subsection{Acknowledgements}

I would like to thank Dan Margalit for his excellent class on the Torelli group.  I would also like to thank Justin Lanier, Agniva Roy and Trent Schirmer for helpful discussions.

\section{Torelli group and the Johnson homomorphism}

\subsection{Torelli group}

Let $\Sigma_{g}$ be a closed, oriented surface of genus $g$ and let $\Sigma_{g,1}$ denote a compact, oriented surface of genus $g$ and 1 boundary component.  Let $\cM_g$ denote the mapping class group of $\Sigma_g$ and let $\cI_{g}$ denote the Torelli group of $\Sigma_g$.  It is the subgroup of $\cM_g$ consisting of classes of homeomorphisms of $\Sigma_g$ to itself that induce the trivial homomorphism on homology.  Let $\cM_{g,1}$ be the mapping class group of $\Sigma_{g,1}$, consisting of isotopy classes of homeomorphisms of $\Sigma_{g,1}$ that fix the boundary pointwise.  Let $\cI_{g,1}$ denote the Torelli group of $\Sigma_{g,1}$. For technical reasons, it is often convenient to work with $\cI_{g,1}$ instead of $\cI_g$.  The groups are related by the exact sequence
\[1 \rightarrow \pi_1(UT\Sigma_g) \rightarrow \cI_{g,1} \rightarrow \cI_g \rightarrow 1\]
where $UT \Sigma_g$ denote the unit tangent bundle of $\Sigma_g$.  Let $\cK_{g}$ and $\cK_{g,1}$ be the subgroups of $\cI_g$ and $\cI_{g,1}$ generated by Dehn twists on separating curves.

The following is a basic fact in the theory of mapping class groups.

\begin{proposition}
\label{prop:cut-system-torelli}
Let $\alphas = (\alpha_1,\dots,\alpha_g)$ be a collection of disjoint, simple closed curves representing linearly independent classes in $H_1(\Sigma)$.  Let $\alphas' = (\alpha'_1,\dots,\alpha'_g)$ be a second collection of disjoint, simple closed curves satisfying $[\alpha_i] = [\alpha'_i]$ for all $i=1,\dots,g$.  There exists an element $\phi \in \cI_g$ such that  $\phi(\alpha_i) =\alpha'_i$ for all $i = 1,\dots,g$.
\end{proposition}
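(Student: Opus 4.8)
The plan is to first realize the correspondence $\alpha_i \mapsto \alpha'_i$ by \emph{some} homeomorphism of $\Sigma$, and then to correct that homeomorphism within the stabilizer of the configuration $\alphas'$ until it acts trivially on $H_1(\Sigma)$. The starting point is the standard fact that a collection of $g$ disjoint simple closed curves on $\Sigma_g$ spans a $g$-dimensional subspace of $H_1(\Sigma)$ if and only if it is a \emph{cut system}: the complement of a multicurve $(\alpha_1,\dots,\alpha_k)$ has $1+k-\mathrm{rk}\langle[\alpha_1],\dots,[\alpha_k]\rangle$ components, so linear independence of $[\alpha_1],\dots,[\alpha_g]$ forces the complement to be connected, and an Euler characteristic count then forces it to be a sphere with $2g$ boundary components. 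In particular both $\alphas$ and $\alphas'$ are cut systems.

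Next I would apply the change-of-coordinates principle for surfaces to obtain an orientation-preserving homeomorphism $\psi_0 \colon \Sigma \to \Sigma$ with $\psi_0(\alpha_i) = \alpha'_i$ for all $i$ (as unoriented curves); this uses only that $\alphas$ and $\alphas'$ are cut systems with the same combinatorial pattern. Composing $\psi_0$ with homeomorphisms fixing every $\alpha'_j$ setwise --- for instance, maps acting as $-I$ on a single symplectic summand and supported in a one-holed torus neighbourhood of $\alpha'_i$ together with a dual arc disjoint from the other curves --- I can upgrade to a homeomorphism $\psi$ with $\psi_*[\alpha_i] = [\alpha'_i]$, so that $\psi_*$ fixes the Lagrangian $L = \langle [\alpha'_1],\dots,[\alpha'_g]\rangle$ pointwise. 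Extending $a_i := [\alpha'_i]$ to a symplectic basis $a_1,\dots,a_g,b_1,\dots,b_g$ of $H_1(\Sigma;\ZZ)$, the symplectic condition forces $\psi_*(b_i) = b_i + \sum_j B_{ij} a_j$ for a symmetric integer matrix $B$.

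It remains to produce $\chi \in \cM_g$ with $\chi(\alpha'_i) = \alpha'_i$ for all $i$ and $\chi_* = \psi_*^{-1}$; then $\phi := \chi \circ \psi$ satisfies $\phi(\alpha_i) = \alpha'_i$ and $\phi_* = \mathrm{id}$, i.e.\ $\phi \in \cI_g$. The group of matrices $\{\,a_k \mapsto a_k,\ b_i \mapsto b_i + \sum_j B_{ij} a_j : \ B = B^{T}\,\}$ is the additive group of symmetric integer matrices, hence generated by the $E_{ii}$ and the $E_{ij}+E_{ji}$ ($i \neq j$), so it suffices to realize each of these by a mapping class fixing every $\alpha'_k$. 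The Dehn twist about $\alpha'_i$ fixes every $\alpha'_k$ (being supported away from them) and acts on $H_1$ by $x \mapsto x + \langle x, a_i\rangle a_i$, realizing $\pm E_{ii}$. For the off-diagonal matrices, the planar surface $\Sigma \setminus (\alpha'_1 \cup \dots \cup \alpha'_g)$ contains a simple closed curve $c$ homologous to $a_i + a_j$ (encircle the two boundary circles coming from $\alpha'_i$ and $\alpha'_j$ with suitable orientations); $T_c$ fixes every $\alpha'_k$ and acts by $x \mapsto x + \langle x, a_i + a_j\rangle(a_i + a_j)$, and $T_c T_{\alpha'_i}^{\mp1} T_{\alpha'_j}^{\mp1}$ then realizes $\pm(E_{ij}+E_{ji})$. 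A suitable product of these twists (together with the $-I$-maps above to absorb any residual signs) yields the required $\chi$.

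The step I expect to be the main obstacle is the orientation bookkeeping threaded through the second and third paragraphs: arranging $\psi_*$ to fix $L$ \emph{pointwise} rather than merely setwise, and tracking signs when assembling $\chi$ from Dehn twists. Once that is set up, the argument reduces to the elementary structure of $\mathrm{Sp}(2g,\ZZ)$ and of simple closed curves on a holed sphere.
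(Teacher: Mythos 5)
Your proof is correct, but it takes a genuinely different route from the paper's. The paper extends the two cut systems to geometric symplectic bases $\{\alphas,\betas\}$ and $\{\alphas',\betas'\}$ chosen so that $[\beta_i]=[\beta'_i]$, and then applies the change-of-coordinates principle once to the full bases, so the resulting homeomorphism visibly acts trivially on a basis of $H_1(\Sigma)$ (with the key choice of the dual curves deferred to Farb--Margalit). You instead apply change of coordinates only to the cut systems and then repair the homological defect inside the setwise stabilizer of $\alphas'$: once $\psi_*$ fixes the Lagrangian $L$ pointwise, the symplectic condition confines the error to a unipotent block given by a symmetric matrix $B$, and you realize an arbitrary such $B$ (hence $\psi_*^{-1}$) by twists about the $\alpha'_i$ together with twists about curves in the complementary $2g$-holed sphere homologous to $a_i+a_j$. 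What the paper's approach buys is brevity: the entire correction is absorbed into the claim that the $\betas'$ can be chosen with prescribed homology classes, which is essentially the same unipotent adjustment performed at the level of curves by band-summing dual curves with $\alpha$-curves. What your approach buys is that this hidden step is made completely explicit through the symplectic representation, at the cost of the sign bookkeeping you flag; that bookkeeping does go through, since the $-I$ corrections can be taken to be $(T_aT_b)^3$ supported in a one-holed torus neighborhood of $\alpha'_i$ and a dual curve, which fixes every $\alpha'_k$ up to isotopy, and both signs of each generator $E_{ii}$ and $E_{ij}+E_{ji}$ are available from the twists and their inverses. One minor caveat: your component-count formula $1+k-\mathrm{rk}$ should be read with the rank taken mod $2$; for your purposes it is cleaner to note that a separating subcollection would give an integral relation $\sum\pm[\alpha_i]=0$, contradicting independence, after which the Euler characteristic count gives the planar complement as you say.
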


\begin{proof}
We sketch the proof; more details can be found in \cite[Chapter 6.3.2]{FM-primer}.  The cut systems $\alphas,\alphas'$ can be extended to geometric symplectic bases $\{\alphas,\betas\}$ and $\{\alphas',\betas'\}$ such that $[\beta_i] = [\beta'_i]$.  Each complement $\Sigma \smallsetminus (\alphas \cup \betas)$ and $\Sigma \smallsetminus (\alphas' \cup \betas')$ is a sphere with $g$ boundary components.  We can choose a homeomorphism between the spheres that identifies $\alpha_i$ with $\alpha'_i$ and $\beta_i$ with $\beta'_i$ and extends to a homeomorphism $\phi$ of $\Sigma$.  The induced map on homology sends $[\alpha_i]$ to $[\alpha'_i]$ and $[\beta_i]$ to $[\beta'_i]$.  In other words, $\phi$ acts trivially on homology and is therefore in the Torelli group.
\end{proof}

\subsection{Johnson homomorphism}

Let $\Gamma = \Gamma_0$ denote the fundamental group of $\Sigma_{g,1}$, with a basepoint chosen in the boundary, let $\Gamma_k = [\Gamma,\Gamma_{k-1}]$ denote the $k^{\text{th}}$ term in the lower central series for $\Gamma$ and let $N_k = \Gamma/\Gamma_{k}$ be the $k^{\text{th}}$ nilpotent quotient. Note that $N_1 = \Gamma/ [\Gamma,\Gamma] \cong H_1(\Sigma_{g,1};\ZZ)$.  There is an exact sequence of groups
\[ 1 \rightarrow \cL_{k+1} \rightarrow N_{k+1} \rightarrow N_k \rightarrow 1\]
where $\cL_{k+1}$ is the center of $N_{k+1}$.  There is a natural action of $\cM_{g,1}$ on $\Gamma$ and the canonical morphism $\cM_{g,1} \rightarrow Aut(\Gamma)$ is injective.  The subgroups $\Gamma_k$ are characteristic and therefore preserved by any automorphism of $\Gamma$.  Thus, we obtain a map $\cM_{g,1} \rightarrow Aut(N_k)$ and denote its kernel by $\cM_{g,1}(k)$.  The Torelli group $\cI_{g,1}$ is precisely $\cM_{g,1}(1)$.  There are sequence of homomorphisms
\[\tau_k: \cM_{g,1}(k) \rightarrow \text{Hom}(N_1,\cL_{k+1})\]
For $k = 1$, the homomorphism
\[\tau:  \cI_{g,1} \cong \cM_{g,1}(1) \rightarrow \text{Hom}(N_1,\cL_2) \cong \wedge^3 H_1(\Sigma)\]
is known as the {\it the} Johnson homomorphism.  

\begin{theorem}[\cite{Johnson2}]
The kernel of $\tau$ is $\cK_{g,1}$.
\end{theorem}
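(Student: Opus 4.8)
The plan is to split the statement into the containment $\cK_{g,1}\subseteq\ker\tau$, which is elementary, and the reverse containment, which is the real content and which I would reduce to the injectivity of an induced homomorphism to $\wedge^{3}H$, where $H=H_{1}(\Sigma_{g,1};\ZZ)$. For the easy inclusion: if $\gamma$ is a separating simple closed curve then $\gamma$ bounds a subsurface, so $[\gamma]\in\Gamma_{1}=[\Gamma,\Gamma]$; a short computation shows that for every $x\in\Gamma$ the element $T_{\gamma}(x)\,x^{-1}$ is a product of conjugates of commutators $[y,\gamma]$, and hence lies in $[\Gamma,\Gamma_{1}]=\Gamma_{2}$. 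Therefore $T_{\gamma}$ acts trivially on $N_{2}=\Gamma/\Gamma_{2}$, i.e.\ $T_{\gamma}\in\cM_{g,1}(2)$, which is exactly the statement $\tau(T_{\gamma})=0$. As $\ker\tau$ is a subgroup, it contains $\cK_{g,1}$, so $\tau$ descends to a homomorphism
\[\overline{\tau}\colon \cI_{g,1}/\cK_{g,1}\longrightarrow \wedge^{3}H,\]
and the theorem is equivalent to the assertion that $\overline{\tau}$ is injective. (For $g\le 2$ the statement can be verified separately by classical arguments, so below I assume $g\ge 3$.)

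To prove that $\overline{\tau}$ is injective I would follow Johnson \cite{Johnson2}. The first step is to secure a manageable generating set for $\cI_{g,1}$ modulo $\cK_{g,1}$: by Johnson's generation theorem, $\cI_{g,1}$ is generated by bounding-pair maps, and --- using the lantern relation to rewrite separating twists as products of bounding-pair maps --- in fact by \emph{genus-one} bounding-pair maps $T_{a}T_{b}^{-1}$, where $a\cup b$ cobounds a subsurface of genus one. The second step is to compute $\tau$ on such a generator directly from the action on $N_{2}$: if the genus-one piece carries the symplectic pair $x,y$ and $c=[a]=[b]$, then
\[\tau\!\left(T_{a}T_{b}^{-1}\right)=c\wedge x\wedge y,\]
and as the configuration varies these classes span $\wedge^{3}H$ (so, incidentally, $\overline{\tau}$ is surjective).

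It then remains to show that any product of genus-one bounding-pair maps representing $0$ in $\wedge^{3}H$ is trivial in $\cI_{g,1}/\cK_{g,1}$. I would carry this out by reducing an arbitrary such word to a normal form, using three families of moves: (i) commutation of bounding-pair maps with disjoint support; (ii) ``change-of-handle'' moves rewriting a genus-one bounding-pair map with data $(c;x,y)$ in terms of bounding-pair maps supported on a fixed reference family of subsurfaces; and (iii) three-term relations among bounding-pair maps sharing a handle which, modulo $\cK_{g,1}$, exactly reproduce the multilinearity and antisymmetry of $\wedge^{3}$. Verifying that each move is a genuine identity in $\cI_{g,1}/\cK_{g,1}$, and that the normal form it produces is faithfully recorded by $\overline{\tau}$, forces $\ker\overline{\tau}=0$.

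I expect the main obstacle to be precisely this last step, namely establishing that $\cI_{g,1}/\cK_{g,1}$ carries \emph{no relations beyond} those forced by $\wedge^{3}H$. This is not a formal manipulation: it requires genuine surface topology --- arranging subsurfaces into prescribed disjoint or nested positions, producing explicit isotopies that realize the claimed identities, and checking at each stage that the class modulo separating twists is unchanged. This is the technical heart of Johnson's ``Structure'' series, which I would import essentially wholesale; by comparison, the easy inclusion, the reduction to $\overline{\tau}$, and the evaluation $\tau(T_{a}T_{b}^{-1})=c\wedge x\wedge y$ are routine.
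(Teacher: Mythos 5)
The paper itself offers no proof of this statement: it is quoted verbatim from Johnson \cite{Johnson2}, so the only meaningful comparison is with Johnson's own argument, which your outline tracks faithfully at the level of strategy. Your easy inclusion is fine: for a separating curve $\gamma$ one has $[\gamma]\in\Gamma_1$, and the computation $T_\gamma(x)x^{-1}=$ product of conjugates of commutators $[\gamma,\cdot]\in[\Gamma,\Gamma_1]=\Gamma_2$ does show $T_\gamma\in\cM_{g,1}(2)=\ker\tau$, hence $\cK_{g,1}\subseteq\ker\tau$ and $\tau$ descends to $\overline{\tau}$ on $\cI_{g,1}/\cK_{g,1}$. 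The evaluation $\tau(T_aT_b^{-1})=c\wedge x\wedge y$ on a genus-one bounding pair is consistent with the $3$-chain formula recorded in Lemma \ref{lemma:Johnson-3chain}, and the reduction to generation by genus-one bounding-pair maps (for $g\ge 3$, via the lantern relation) is also the correct first move.

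The genuine gap is that the entire content of the theorem, namely $\ker\tau\subseteq\cK_{g,1}$, i.e.\ injectivity of $\overline{\tau}$, is never actually established. Your moves (i)--(iii) are named but not defined or verified; in particular (iii), the assertion that the relations among bounding-pair maps modulo separating twists ``exactly reproduce'' the multilinearity and antisymmetry of $\wedge^3 H$, is precisely the theorem in disguise. The honest formulation of what must be done is to construct a well-defined homomorphism $\sigma:\wedge^3 H\rightarrow \cI_{g,1}/\cK_{g,1}$ sending $c\wedge x\wedge y$ to the class of the corresponding genus-one bounding-pair map and to check $\sigma\circ\overline{\tau}=\mathrm{id}$; the well-definedness of $\sigma$ (independence of the geometric realization, additivity in each slot modulo $\cK_{g,1}$) is the many-page surface-topology core of Johnson's Structure series, and your appeal to a ``normal form faithfully recorded by $\overline{\tau}$'' is circular unless that construction is carried out. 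Since you state explicitly that this step would be imported from Johnson wholesale, what you have is an accurate citation-level roadmap of \cite{Johnson2} --- which is exactly how the paper treats the statement --- but not an independent proof of it.
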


Recall that a {\it bounding pair} consists of two disjoint, homologous curves $d,d'$ and the corresponding bounding pair map is $T_d T^{-1}_{d'}$ is an element of the Torelli group.  The genus of a bounding pair is the genus of the subsurface cut off by $d \cup d'$. A {\it $k$-chain} $(c_1,\dots,c_k)$ is a collection of simple closed curves such that $i(c_j,c_{j+1}) = 1$ and $i(c_j,c_k) = 0$ for $| j - k| > 1$.  If $k$ is odd, then the boundary of a neighborhood of $c_1 \cup \cdots \cup c_k$ is a bounding pair of genus $\frac{1}{2}(k-1)$.  If $k$ is even, then the boundary of a neighborhood of $c_1 \cup \cdots \cup c_k$ is a separating curve that cuts off a subsurface of genus $\frac{1}{2}k$.

\begin{lemma}
\label{lemma:Johnson-3chain}
Let $(a,b,c)$ be a 3-chain and let $T_{d}T^{-1}_{d'}$ be the corresponding bounding pair.  Then
\[\tau(T_{d}T^{-1}_{d'}) = [a] \wedge [b] \wedge [c]\].
\end{lemma}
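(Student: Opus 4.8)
The plan is to reduce the computation to a single standard model by exploiting the naturality of $\tau$, and then to specialize Johnson's formula for the value of the Johnson homomorphism on a bounding pair map.

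\emph{Step 1: Naturality and reduction to a model.} Straight from the definition of $\tau$ via the action of $\cM_{g,1}$ on $\pi_1(\Sigma_{g,1})$, the Johnson homomorphism is $\cM_{g,1}$-equivariant: for $\psi \in \cM_{g,1}$ and $\phi \in \cI_{g,1}$ one has $\tau(\psi\phi\psi^{-1}) = \psi_*\tau(\phi)$, where $\psi_*$ is the induced action on $\wedge^3 H_1(\Sigma)$. Since $T_{\psi(d)}T_{\psi(d')}^{-1} = \psi(T_dT_{d'}^{-1})\psi^{-1}$ and $\psi$ sends the 3-chain $(a,b,c)$ to the 3-chain $(\psi(a),\psi(b),\psi(c))$ with $[\psi(a)] = \psi_*[a]$, etc., both sides of the asserted identity transform by $\psi_*$. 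So it suffices to check the identity for one explicitly chosen 3-chain. By the change-of-coordinates principle (see \cite[Ch.~1]{FM-primer}), any two genus-1 3-chains on $\Sigma_{g,1}$ are in a common $\cM_{g,1}$-orbit: the regular neighborhood $S = N(a\cup b\cup c)$ is a genus-1 subsurface with boundary the bounding pair $d\cup d'$, and its complement is a connected genus-$(g-2)$ surface with three boundary circles, one of them $\partial\Sigma_{g,1}$; this data pins down the embedding up to homeomorphism.

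\emph{Step 2: Computation on the model.} Put $(a,b,c)$ in standard position so that $[a],[b],[c]$ are read off at once, and write $S = N(a\cup b\cup c)$, $\partial S = d\cup d'$. In the basis $[a],[b],[c]$ of $H_1(S)$ the algebraic intersection form has rank $2$, its radical is generated by $[a]+[c]$ (orient $d$ so that $[d] = [a]+[c]$ in $H_1(S)$), and $\{[a],[b]\}$ descends to a symplectic basis of $H_1(S)/\mathrm{rad}$ with symplectic element $[a]\wedge[b]$. Johnson's bounding pair formula (\cite{Johnson2}; see also \cite[Ch.~6]{FM-primer}) then gives
\[\tau(T_dT_{d'}^{-1}) \;=\; ([a]\wedge[b])\wedge[d] \;=\; [a]\wedge[b]\wedge([a]+[c]) \;=\; [a]\wedge[b]\wedge[c],\]
the last step using $[a]\wedge[b]\wedge[a] = 0$. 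If one prefers a hands-on argument instead of citing this formula: pick generators of $\pi_1(\Sigma_{g,1})$ adapted to $S$, compute the action of $T_d$ and of $T_{d'}$ on each by inserting a based copy of $d$, resp.\ $d'$, at every intersection point with the appropriate sign, form $T_dT_{d'}^{-1}(x)\,x^{-1}$, and reduce modulo $\Gamma_3$; the first-order terms cancel because $[d] = [d']$, and the surviving commutator term is precisely the element of $\mathrm{Hom}(N_1,\cL_2)$ that corresponds to $[a]\wedge[b]\wedge[c]$.

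\emph{Main obstacle.} The mathematical content is light; the real work is keeping the orientation conventions consistent. One must fix which boundary component of $S$ is called $d$ and which $d'$, fix the signs of the algebraic intersection numbers implicit in the notion of a 3-chain, and fix the sign in the identification $\mathrm{Hom}(N_1,\cL_2)\cong\wedge^3 H_1(\Sigma)$, all so that the answer comes out as $+[a]\wedge[b]\wedge[c]$ rather than its negative. In the hands-on variant, the delicate computation is the reduction modulo $\Gamma_3$ of the twisted generators.
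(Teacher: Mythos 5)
Your argument is correct in substance, but be aware of how it sits relative to the paper: the paper offers no proof of this lemma at all -- it is quoted as Johnson's known computation of $\tau$ on genus-1 bounding pair maps (from \cite{Johnson2}, cf.\ also the primer \cite{FM-primer}) -- so any comparison is between your writeup and that implicit citation. Your Step 1 (equivariance of $\tau$ plus change of coordinates for 3-chains) is fine, with the small remark that transitivity of $\cM_{g,1}$ on 3-chains with connected complement uses that $d\cup d'$ is a genuine bounding pair, i.e.\ $[a]+[c]\neq 0$; this is built into the hypothesis of the lemma, and in the degenerate case $[a]=-[c]$ both sides vanish anyway. The more substantive caveat is in Step 2: the ``Johnson bounding pair formula'' you invoke, $\tau(T_dT_{d'}^{-1})=\omega_S\wedge[d]$ for the genus-1 subsurface $S=N(a\cup b\cup c)$, \emph{is} the lemma (up to the substitution $[d]=[a]+[c]$), so citing it makes the proof a reduction to exactly the same source the paper leans on, and in fact renders your Step 1 reduction unnecessary, since the formula applies to an arbitrary bounding pair directly. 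The only genuinely independent content is your sketched hands-on computation (twisting the $\pi_1$-generators and reducing mod $\Gamma_3$), and that is the part left unexecuted; if you want a self-contained proof rather than a citation, that computation on the standard model -- with the sign conventions you rightly flag -- is the step that must actually be carried out.
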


To understand equivalences between Heegaard splittings and between trisections, we are interested in bounding pair maps that extend across a handlebody $H_g$.

\begin{lemma}
\label{lemma:annular-twist}
Let $d,d'$ be a bounding pair that bounds an annulus in a handlebody $H_g$.  Then the bounding pair map $T_d T^{-1}_{d'}$ extends across the handlebody.
\end{lemma}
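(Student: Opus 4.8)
The plan is to realize the extension explicitly as a twist along the annulus, supported in a regular neighborhood of it. Write $A \subset H_g$ for the properly embedded annulus with $\partial A = d \sqcup d'$, and let $N$ be a closed regular neighborhood of $A$ in $H_g$. Since $A$ is an annulus, $N \cong A \times [-1,1]$ is a solid torus and $\partial N$ is a torus. As $A$ is properly embedded, $N \cap \partial H_g$ is a regular neighborhood of $\partial A$, that is, a disjoint union of annular neighborhoods $\nu_d, \nu_{d'}$ of $d$ and $d'$ in $\partial H_g$; the frontier of $N$ in $H_g$ (the rest of $\partial N$) consists of two annuli $A_+, A_-$ parallel to $A$ and lying, apart from their boundaries, in the interior of $H_g$.

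I would then define a self-homeomorphism $\psi$ of the torus $\partial N$ to be the identity on $A_+ \cup A_-$, the Dehn twist $T_d$ supported in the interior of the annulus $\nu_d$, and the inverse twist $T_{d'}^{-1}$ supported in the interior of $\nu_{d'}$; shrinking the twist supports away from the corners makes the pieces agree on overlaps, so $\psi$ is well defined. The key point is that $d$ and $d'$, regarded now as curves on $\partial N$, are disjoint and essential there — each is isotopic in $N$ to the core circle of the solid torus, hence represents a nonzero class on $\partial N$ — and any two disjoint essential simple closed curves on a torus are isotopic. Therefore $T_d$ and $T_{d'}$ are isotopic homeomorphisms of $\partial N$, so $\psi = T_d \circ T_{d'}^{-1}$ is isotopic to the identity on $\partial N$. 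A self-homeomorphism of $\partial N$ that is isotopic to the identity extends over the solid torus $N$: insert an isotopy from the identity to $\psi$ into a collar $\partial N \times [0,1] \subset N$ and take the identity on the rest of $N$. Extending by the identity across $H_g \setminus N$ yields a homeomorphism $f$ of $H_g$, and by construction $f|_{\partial H_g}$ is the identity off $\nu_d \cup \nu_{d'}$ and equals $T_d$, respectively $T_{d'}^{-1}$, on those annuli; hence $f|_{\partial H_g} = T_d T_{d'}^{-1}$.

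I expect the only real content to be the isotopy $T_d \simeq T_{d'}$ on $\partial N$: although $d$ and $d'$ are not isotopic on $\partial H_g$ — which is precisely why $T_d T_{d'}^{-1}$ is a nontrivial element of the Torelli group — they do become isotopic on the boundary torus of a neighborhood of the annulus they cobound, and this is what lets the two twists cancel. It also explains why the lemma concerns $T_d T_{d'}^{-1}$ rather than $T_d T_{d'}$: the latter restricts on $\partial N$ to $T_{d'}^{2}$, the square of a Dehn twist about an essential curve of the torus $\partial N$, which does not extend over the solid torus $N$. The remaining ingredients — the local product structure on $N$ and the localization of the twists away from $A_+ \cup A_-$ — are routine.
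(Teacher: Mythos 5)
Your proof is correct and is essentially the paper's argument: the paper simply cuts $H_g$ along the annulus $A$ and reglues by a Dehn twist, and your regular-neighborhood construction (extending $T_d T_{d'}^{-1}$ over the solid torus $N \cong A \times [-1,1]$ because $d$ and $d'$ become isotopic on the torus $\partial N$) is a detailed implementation of that same annulus twist, supported near $A$ and restricting to $T_d T_{d'}^{-1}$ on $\partial H_g$.
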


\begin{proof}
Cut $H_g$ along the annulus and reglue via a Dehn twist.  This gives a homeomorphism of the handlebody to itself that restricts to the bounding pair map on boundary surface.
\end{proof}

\begin{lemma}
\label{lemma:3-chain-twist}
Let $(a,b,c)$ be a 3-chain and suppose that one of the three curves bounds in the handlebody $H_g$.  Then the corresponding boundary pair $T_d T^{-1}_{d'}$ extends across the handlebody.
\end{lemma}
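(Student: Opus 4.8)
The plan is to show that, after an isotopy, the 3-chain bounding pair map is actually an annular twist in the sense of Lemma \ref{lemma:annular-twist}, so that we may invoke that lemma directly. Suppose $(a,b,c)$ is a 3-chain in $\Sigma = \partial H_g$ and one of the three curves bounds a disk in $H_g$. Let $N$ be a regular neighborhood of $a \cup b \cup c$; since the chain has odd length $3$, the boundary $\partial N$ consists of two disjoint homologous curves $d, d'$ forming the bounding pair, and $T_d T^{-1}_{d'}$ is the bounding pair map in question. The subsurface $N$ is a genus-$1$ surface with two boundary components, and $d \cup d'$ cuts off a genus-$1$ piece $N$ on one side and a genus-$(g-1)$ piece on the other.

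The key step is to produce a compressing disk for $H_g$ whose boundary is one of $d$ or $d'$, which then exhibits $d \cup d'$ as bounding an annulus in $H_g$ (the annulus being a collar of either boundary curve pushed slightly into the handlebody, or more precisely: once $d$ bounds a disk $D$ in $H_g$, so does $d'$ since it is isotopic to $d$ in $H_g$ — they cobound an annulus in $H_g$, namely the frontier of a neighborhood of $D$ union a sub-band). Concretely, I would proceed by cases on which curve of the 3-chain bounds. If the middle curve $b$ bounds a disk in $H_g$, compress along it: the handlebody decomposes and one checks directly that the chain neighborhood $N$, after compression, becomes an annular neighborhood, so $d$ and $d'$ cobound an annulus in $H_g$. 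If an end curve, say $a$, bounds a disk $D_a$ in $H_g$, then because $i(a,b) = 1$ one can use $D_a$ to slide $b$ off its intersection with $a$ at the cost of modifying $b$ by a curve isotopic (in $H_g$) to a band sum; tracking this through, $d$ or $d'$ is seen to bound in $H_g$. In either case, once one of $d, d'$ bounds a disk in $H_g$, the pair $d, d'$ bounds an annulus in $H_g$ and Lemma \ref{lemma:annular-twist} applies to conclude that $T_d T^{-1}_{d'}$ extends across $H_g$.

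The main obstacle is the end-curve case: when $a$ bounds but $b, c$ are geometrically dual to it, one must carefully verify that the disk for $a$ can be used to guide a compression or isotopy of the chain neighborhood without introducing new intersections that obstruct the annulus. I expect the cleanest argument is to change coordinates so that $a$ is a standard meridian of $H_g$ and the chain $(a,b,c)$ sits in a standard genus-$2$ summand $S^1 \times B^2 \, \natural \, (S^1 \times B^2)$, reducing to an explicit model computation; the general case then follows because any compressing disk can be taken standard after an ambient homeomorphism of $H_g$, and the bounding pair map transforms equivariantly. Alternatively, one can observe that the bounding pair $d, d'$ of a 3-chain with one curve compressible is itself, up to the handlebody group action, conjugate to a bounding pair cobounding an annulus, and then cite Lemma \ref{lemma:annular-twist}.
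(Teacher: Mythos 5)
Your overall strategy---exhibit an annulus in $H_g$ cobounded by $d$ and $d'$ and then invoke Lemma \ref{lemma:annular-twist}---is the same as the paper's, but your stated key step is false and it sinks your end-curve case. You propose to ``produce a compressing disk for $H_g$ whose boundary is one of $d$ or $d'$.'' No such disk exists in general: up to sign, $[d]=[d']=[a]+[c]$ in $H_1(\Sigma)$, so if only the end curve $a$ bounds in $H_g$ then $[d]=[c]$ in $H_1(H_g)$, and nothing in the hypotheses forces $[c]$ to vanish there (in a genus-two handlebody take $a$ a meridian, $b$ its dual, and $c$ a curve with $[c]=[\alpha_1]+[\beta_2]$, meeting $b$ once and disjoint from $a$; then $[d]$ maps to a generator of $H_1(H_g)$). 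So $d$ and $d'$ cobound an annulus in $H_g$ without either of them compressing, the sentence ``tracking this through, $d$ or $d'$ is seen to bound in $H_g$'' cannot be repaired, and the fallback of standardizing the disk for $a$ and reducing to a genus-two model is unjustified, since making $a$ a standard meridian says nothing about how $b$ and $c$ sit relative to $H_g$.

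The correct mechanism is already in your middle-curve case, and it needs no case division: whichever chain curve bounds a properly embedded disk $D$ in $H_g$, that curve is non-separating in the genus-one subsurface $N=N(a\cup b\cup c)$ (it is crossed once by an adjacent chain curve), so pushing the interior of $N$ into $H_g$ rel $\partial N=d\cup d'$ and compressing along $D$ yields an embedded genus-zero surface in $H_g$ with boundary $d\cup d'$, i.e.\ the desired annulus; now apply Lemma \ref{lemma:annular-twist}. The paper argues slightly differently: after possibly reversing the chain it assumes $a$ or $b$ bounds, observes that the separating curve $s=\partial N(a\cup b)$ then bounds a disk in $H_g$, and that $d'$ is a band sum of $d$ with $s$, so $d$ and $d'$ cobound an annulus. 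Either route works; as written, though, your argument has a genuine gap in the end-curve case.
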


\begin{proof}
Without loss of generality, assume that either $a$ or $b$ bound in the handlebody.  Then the separating curve $s$, which is the boundary of a neighborhood of the 2-chain $(a,b)$, bounds in the handlebody.  Consequently, the curve $d'$ is obtained from $d$ by a band sum with this curve $s$.  It is now clear that $d$ and $d'$ bound an annulus in the handlebody.  Thus by Lemma \ref{lemma:annular-twist}, the bounding pair $T_d T^{-1}_{d'}$ extends across the handlebody.
\end{proof}

\section{Heegaard splittings and Trisections}

Throughout this section, let $H_g$ denote a 3-dimensional, genus $g$ handlebody and let $\Sigma_g$ be an abstract, closed surface of genus $g$.  Let $\cM(\del H_g)$ denote the mapping class group of $\del H_g$ and let $\cM(\Sigma_g)$ denote the mapping class group of $\Sigma_g$.  Let $\cH_g$ denote the subgroup of $\cM(\del H_g)$ consisting of classes that can be represented by a homeomorphism that extends across $H_g$.

A {\it cut system of disks} for $H_g$ is a collection of $g$ disjoint, properly embedded disks $\cD = \{D_i\}$ in $H_g$ whose complement is homeomorphic to $B^3$.    The boundaries of the disks are a collection $\alphas = \{\alpha_1,\dots,\alpha_g\}$ of disjoint, simple closed curves in $\del H_g$ that generate a $g$-dimensional subspace in $H_1(\del H_g;\ZZ)$.  If $\phi \in \cH_g$, then the cut system $\phi(\cD)$ can be connected to $\cD$ by a sequence of handleslides.  A {\it handleslide} of $D_i$ over $D_j$ consists of replacing $D_i$ by $D'_i$, which is obtained by joining $D_i$ to a second copy of $D_j$ by an embedded arc in $\del H_g$.

A collection $\alphas$ of $g$ disjoint, simple closed curves in $\Sigma_g$ whose complement has genus 0 is known as a {\it cut system of curves}.  A homeomorphism $\phi: \del H_g \rightarrow \Sigma_g$ sends the boundaries of a cut system of disks in $H_g$ to a cut system of curves $\alphas$.

We assume, once and for all, that we have fixed a cut system of disks $\cD$ with boundary $\alphas$ for $H_g$.

\subsection{Heegaard splittings}

Let $\Theta_g$ denote the set of pair $\theta = \{\theta_1,\theta_2\}$ where each $\theta_{\lambda}: \del H_g \rightarrow \Sigma$ is an isotopy class of homeomorphisms.  Given any element $\theta$, we can build a closed 3-manifold
\[Y_{\theta} = H_g \cup_{\theta_1\theta_2^{-1}} (-H_g)\]
This decomposition of $Y_{\theta}$ into two handlebodies is a {\it Heegaard decomposition}.

A collection of $2g$ simple closed curves $\{x_i,y_i\}$ on $\Sigma$ is a {\it geometric symplectic basis} if the geometric intersection numbers satisfy
\begin{align*}
i(x_i,y_j) &= \delta_{i,j} & i(x_i,x_j) &= 0 & i(y_i,y_j) &= 0.
\end{align*}

\begin{definition}
A Heegaard decomposition $S^3 = Y_{\theta}$, with $\theta = \{\theta_1,\theta_2\}$, is  {\it standard} if $\{\theta_1(\alpha_i),\theta_2(\alpha_i)\}$ is a geometric symplectic basis for $\Sigma_g$.
\end{definition}

\begin{definition}
A Heegaard decomposition $\#_k S^1 \times S^2 = Y_{\theta}$, with $\theta = \{\theta_1,\theta_2\}$, is  {\it standard} if there exists a geometric symplectic basis $\{x_i,y_i\}$ such that
\[\theta_1(\alpha_i) = x_i \qquad \theta_2(\alpha_i) = \begin{cases} x_i & \text{if } 1 \leq i \leq k \\ y_i & \text{if } k + 1 \leq i \leq g \end{cases}\]
\end{definition}

Many pairs $\{\theta_1,\theta_2\}$ specify the same Heegaard decomposition up to homeomorphism.  Recall that $\cH_g$ denotes the subgroup of $\cM(\del H_g)$ consisting of the classes of homeomorphisms that extend across the handlebody $H_g$.  Then $\cH_g \times \cH_g$ acts on $\Theta_g$ on the left as
\[ (\mu_1,\mu_2) \cdot \{\theta_1,\theta_2\} = \{\mu_1 \theta_1,\mu_2 \theta_2\}\]
and the mapping class group $\cM(\Sigma)$ acts on $\Theta_g$ on the right as
\[ \{\theta_1,\theta_2\} \cdot \rho = \{\theta_1 \rho, \theta_2 \rho\}\]
Let $\cG_2 = \cH_g \times \cH_g \times \cM(\Sigma)^{op}$.

\begin{lemma}
The $\cG_2$-orbits of $\Theta_g$ are precisely the equivalence classes of Heegaard splittings.
\end{lemma}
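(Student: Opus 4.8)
The plan is to prove the two inclusions between $\cG_2$-orbits and equivalence classes separately, matching the three generating subgroups of $\cG_2$ with three ways of altering a Heegaard splitting without changing its equivalence class, and using throughout the standard fact that isotopic gluing maps produce homeomorphic splittings (the isotopy extension theorem), which is what makes the assignment $\theta \mapsto Y_\theta$ well defined on isotopy classes. One should also record that the correspondence is onto: every genus-$g$ Heegaard splitting $Y = A \cup_S B$ with handlebodies $A,B$ arises as some $Y_\theta$, obtained by choosing any homeomorphism $S \to \Sigma$ together with identifications of $A$ and $B$ with $H_g$ and $-H_g$ and composing.

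For the inclusion that a $\cG_2$-orbit lies in a single equivalence class, I would handle the three generators in turn. The right action of $\rho \in \cM(\Sigma)^{\mathrm{op}}$ leaves the gluing map untouched, since $(\theta_1\rho)(\theta_2\rho)^{-1} = \theta_1\theta_2^{-1}$; thus $Y_{\theta\cdot\rho}$ is literally equal to $Y_\theta$, only the auxiliary identification with the model surface $\Sigma$ having changed. For a left factor $(\mu_1,\mu_2)$ with $\mu_\lambda \in \cH_g$, choose extensions $\bar\mu_\lambda \colon H_g \to H_g$ across the two handlebodies; the gluing map of $Y_{(\mu_1,\mu_2)\cdot\theta}$ is obtained from that of $Y_\theta$ by composition with $\mu_1$ and $\mu_2$ on the two sides, so $\bar\mu_1$ and $\bar\mu_2$ (precomposed with inverses as dictated by the gluing) assemble into a homeomorphism $Y_\theta \to Y_{(\mu_1,\mu_2)\cdot\theta}$ taking handlebodies to handlebodies. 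Composing these three model cases handles an arbitrary element of $\cG_2$.

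For the reverse inclusion, begin with an equivalence $\Phi \colon Y_\theta \to Y_{\theta'}$, i.e.\ a homeomorphism carrying Heegaard surface to Heegaard surface and each handlebody of $Y_\theta$ to the corresponding handlebody of $Y_{\theta'}$. Restricting $\Phi$ to each handlebody gives self-homeomorphisms of $H_g$, whose boundary restrictions $\psi_1,\psi_2$ therefore extend across $H_g$ and so lie in $\cH_g$. Compatibility of $\Phi$ with the two gluings yields a commuting square relating $\psi_1,\psi_2$ and the gluing maps of $Y_\theta$ and $Y_{\theta'}$. Transporting this square to the model surface via the maps $\theta_\lambda$ and $\theta'_\lambda$, a short computation with the definitions shows that $\theta'_1\psi_1\theta_1^{-1}$ and $\theta'_2\psi_2\theta_2^{-1}$ represent one and the same mapping class $\rho \in \cM(\Sigma)$. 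Taking $\mu_\lambda = \psi_\lambda^{-1} \in \cH_g$ together with this $\rho$, one reads off $\theta' = (\mu_1,\mu_2)\cdot\theta\cdot\rho$, so $\theta'$ lies in the $\cG_2$-orbit of $\theta$.

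The one step that requires genuine care is the final computation: one must organize the restrictions of $\Phi$ into the correct commuting square and then verify that the mapping class of $\Sigma$ extracted from the first handlebody agrees with the one extracted from the second, so that a single reparametrization $\rho$ of $\Sigma$ simultaneously accounts for the changes in $\theta_1$ and in $\theta_2$. Everything else is either a direct gluing construction or an application of the isotopy extension theorem.
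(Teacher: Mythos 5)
The paper states this lemma without proof, treating it as a standard fact, and your argument is a correct write-up of exactly the intended standard reasoning: the right $\cM(\Sigma)$-action leaves the gluing identification literally unchanged, the left $\cH_g\times\cH_g$-action is absorbed by extensions over the handlebodies, and conversely an equivalence of splittings restricts to elements of $\cH_g$ on the two handlebodies whose transports to $\Sigma$ agree, giving a single reparametrization $\rho$. Only cosmetic caution is needed with composition conventions (the paper's products such as $\mu_1\theta_1$ and $\theta_1\rho$ are written in the opposite order from your $\theta_1'\psi_1\theta_1^{-1}$), which does not affect the argument.
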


An important and well-known fact, due to Waldhausen, is that the standard Heegaard splitting of $S^3$ is essentially unique.

\begin{theorem}[\cite{Waldhausen}]
\label{thrm:Waldhausen}
Suppose that the pair $\theta = \{\theta_1,\theta_2\}$ determines a Heegaard splitting of $S^3$.  Then
\[\{\theta_1,\theta_2\} \sim \{\iota_1,\iota_2\}\]
where $\iota = \{\iota_1,\iota_2\}$ is standard.
\end{theorem}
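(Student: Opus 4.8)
The plan is to deduce the statement from Waldhausen's classical uniqueness theorem for Heegaard splittings of $S^3$, together with the dictionary recorded in the lemma just above --- namely that the $\cG_2$-orbits in $\Theta_g$ are exactly the homeomorphism classes of genus-$g$ Heegaard splittings.

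First I would recall Waldhausen's theorem in geometric form: every genus-$g$ Heegaard splitting of $S^3$ is ambient isotopic to the $g$-fold stabilization of the genus-$0$ splitting, i.e.\ to the splitting in which the Heegaard surface bounds a standardly embedded, unknotted genus-$g$ handlebody in $S^3$. An ambient isotopy of $S^3$ is in particular a self-homeomorphism carrying one splitting to the other, so by the preceding lemma the pair $\theta$ lies in the same $\cG_2$-orbit as any pair $\iota'$ representing this standard stabilized splitting.

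Second I would check that a pair $\iota = \{\iota_1,\iota_2\}$ for which $\{\iota_1(\alpha_i),\iota_2(\alpha_i)\}$ is a geometric symplectic basis does in fact represent the standard stabilized splitting, so that $\iota \sim \iota'$ and hence $\theta \sim \iota$. In $Y_\iota = H_g \cup_{\iota_1\iota_2^{-1}}(-H_g)$ the attaching curves of the two handlebodies on $\Sigma_g$ are the cut systems $\{\iota_1(\alpha_i)\}$ and $\{\iota_2(\alpha_i)\}$, and since $i(\iota_1\alpha_i,\iota_2\alpha_j) = \delta_{ij}$ one can assemble the union one genus-$1$ piece at a time along the dual pairs $(\iota_1\alpha_i,\iota_2\alpha_i)$ to see directly that $Y_\iota\cong S^3$ with the iterated genus-$1$ (i.e.\ stabilized) splitting. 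Equivalently, any two geometric symplectic bases of $\Sigma_g$ are carried to one another by a self-homeomorphism of $\Sigma_g$, so all such $\iota$ lie in a single $\cG_2$-orbit and it is enough to exhibit one such $\iota$ realizing the standard genus-$g$ Heegaard diagram of $S^3$, which is immediate.

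The real content is Waldhausen's theorem, which is quoted; the main thing to be careful about is the bookkeeping. One must confirm that the equivalence "$\sim$" given by $\cG_2$-orbits coincides with the homeomorphism-of-splittings relation under which Waldhausen's theorem is phrased (this is precisely the preceding lemma), and that "standard" in the sense of the geometric-symplectic-basis definition agrees with "standard" in the sense of iterated stabilization. Both are routine once the definitions are unwound, and no genuine $3$-manifold topology beyond the citation is needed. It is also worth noting that Waldhausen's theorem provides the uniqueness of the genus-$g$ splitting of $S^3$ for every $g$, which is what lets us reach the target pair $\iota$ no matter what the genus of the given splitting is.
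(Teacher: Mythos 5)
The paper offers no proof of this statement at all---it is quoted directly as Waldhausen's theorem---so your argument, which cites Waldhausen's classical uniqueness result and then checks that the $\cG_2$-orbit relation matches homeomorphism of splittings and that ``standard'' in the geometric-symplectic-basis sense agrees with the iterated stabilization of the genus-$0$ splitting, is exactly the intended reading and the bookkeeping is correct. No discrepancy with the paper's approach.
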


By inductively applying Haken's Lemma, a similar statement holds for connected sums of $S^1 \times S^2$.

\begin{corollary}
\label{cor:Waldhausen}
Suppose that the pair $\theta = \{\theta_1,\theta_2\}$ determines a Heegaard splitting of $\#_k S^1 \times S^2$.  Then
\[\{\theta_1,\theta_2\} \sim \{\iota_1,\iota_2\}\]
where $\iota = \{\iota_1,\iota_2\}$ is standard.
\end{corollary}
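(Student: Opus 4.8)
The plan is to prime-decompose the Heegaard splitting and reduce to Waldhausen's theorem (Theorem~\ref{thrm:Waldhausen}), using Haken's Lemma as the engine. Recall that Haken's Lemma says that a Heegaard splitting of a reducible $3$-manifold admits a \emph{reducing sphere}: an essential embedded $2$-sphere $S$ meeting the Heegaard surface in a single circle $c$, where $c$ is essential in the surface and bounds a properly embedded disk in each of the two handlebodies. Cutting the surface along $c$ and the two handlebodies along those disks exhibits the given splitting as a connected sum of two Heegaard splittings of strictly smaller genus: when $c$ separates the surface these are splittings of the two factors of a nontrivial connected-sum decomposition of the ambient manifold, and when $c$ is nonseparating one splits off the genus-$1$ Heegaard splitting of $S^1\times S^2$ together with a splitting of the manifold obtained by surgering $S$. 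Since $\#_k S^1\times S^2$ is reducible for $k\geq 1$, iterating this decomposition shows that every Heegaard splitting of $\#_k S^1\times S^2$ is equivalent to a connected sum
\[
\cT_1 \# \cdots \# \cT_k \# \cS,
\]
where each $\cT_i$ is a genus-$1$ Heegaard splitting of $S^1\times S^2$ and $\cS$ is a Heegaard splitting of $S^3$ of genus $g-k$ (a stabilization of the trivial one when $g>k$); the iteration terminates since each reduction strictly lowers the genus of the pieces that are not yet of one of these two atomic types.

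I would then identify each factor. The factor $\cS$ is standard by Theorem~\ref{thrm:Waldhausen}. Each $\cT_i$ is standard for an elementary reason: a genus-$1$ Heegaard splitting is a union of two solid tori, and for the underlying manifold to be $S^1\times S^2$ rather than a lens space the two meridians $\theta_1(\alpha_1)$ and $\theta_2(\alpha_1)$ must have geometric intersection number $0$ on the torus, hence be isotopic; completing this common curve to a geometric symplectic basis puts $\cT_i$ into the normal form demanded of a standard splitting.

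Finally I would check that a connected sum of standard splittings is standard. Under the boundary connected sum of handlebodies the fixed cut systems of disks concatenate, so the cut system $\alphas$ of the connected sum is the concatenation of those of the factors, and a geometric symplectic basis of the connected sum is obtained by concatenating geometric symplectic bases of the factors. Permuting the $S^1\times S^2$-summands to the front, which is realized by a homeomorphism of $\Sigma$ and hence allowed within $\cG_2$-equivalence, then puts the pairs $\{\theta_1(\alpha_i),\theta_2(\alpha_i)\}$ into exactly the form in the definition of a standard splitting of $\#_k S^1\times S^2$. The main obstacle is bookkeeping rather than geometry: one has to verify that the connected-sum decomposition produced by Haken's Lemma is compatible, at the level of the pairs $\{\theta_1,\theta_2\}$ and the action of $\cG_2 = \cH_g \times \cH_g \times \cM(\Sigma)^{op}$, with the abstract connected sum of Heegaard splittings, and to keep careful track of the genus and summand counts so that the induction bottoms out cleanly at Waldhausen's theorem and at the genus-$1$ splitting of $S^1\times S^2$.
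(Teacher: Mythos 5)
Your proof is correct and follows essentially the same route as the paper, which likewise deduces the corollary by inductively applying Haken's Lemma to split off summands and then invoking Theorem~\ref{thrm:Waldhausen}; the paper simply leaves implicit the details you spell out (identifying the genus-$1$ $S^1\times S^2$ factors and the connected-sum bookkeeping within the $\cG_2$-action).
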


The homology of $Y_{\theta}$ can be computed using the following chain complex
\[ \xymatrix{
0 \ar[r] & \ZZ \ar[r]^0 & \ZZ^g \ar[r]^Q & \ZZ^g \ar[r]^0 & \ZZ \ar[r] & 0
}\]
All of the maps are nonzero except for $Q$.  Let $\{x_i\}$ denote a basis for $\ZZ^g$ and $\{y_j\}$ denote a second basis.  The linear map $Q$ is defined by the formula
\[Q y_j = \sum_{i = 1}^g \langle \theta_1(\alpha_i), \theta_2(\alpha_j)\rangle_{\Sigma}\]
where $\langle,\rangle_{\Sigma}$ denotes the intersection pairing on $H_1(\Sigma;\ZZ)$.  

\pagebreak

\begin{proposition}
\label{prop:heegaard-ZHS}
Let $\theta = \{\theta_1,\theta_2\} \in \Theta_g$ and $Y_{\theta}$ the associated 3-manifold.
\begin{enumerate}
\item $Y_{\theta}$ is an integral homology 3-sphere if and only if $Q$ is unimodular.
\item If $\rho \in \cI_g$ and $\iota = \{\iota_1,\iota_2\}$ is a standard Heegaard decomposition of $S^3$, then the pair $\{\iota_1,\iota_2 \rho\}$ determines an integral homology 3-sphere $Y_{\rho}$.
\item If $Y_{\theta}$ is an integral homology, then there exists some standard Heegaard decomposition of $S^3$ $\iota = \{\iota_1,\iota_2\}$ and an element $\rho \in \cI_g$ such that
\[\{ \theta_1,\theta_2\} \sim \{\iota_1,\iota_2 \rho\}.\]
\end{enumerate}
\end{proposition}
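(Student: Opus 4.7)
The plan is to treat parts (1) and (2) as direct chain-complex calculations and to concentrate effort on part (3), which I would prove by successive reductions inside the $\cG_2$-orbit of $\theta$. For part (1), the displayed chain complex gives $H_1(Y_\theta) = \coker Q$ and $H_2(Y_\theta) = \ker Q$, both of which vanish simultaneously precisely when the square integer matrix $Q$ is invertible over $\ZZ$, i.e.\ $\det Q = \pm 1$, the definition of unimodular. For part (2), the standardness of $\iota$ forces $Q_{ij} = \langle \iota_1(\alpha_i), \iota_2(\alpha_j)\rangle_\Sigma = \delta_{ij}$, so $Q = I$; since any $\rho \in \cI_g$ acts trivially on $H_1(\Sigma)$, the pair $\{\iota_1, \iota_2 \rho\}$ also has intersection matrix $I$, and part (1) then applies.

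For part (3), I would first use the right $\cM(\Sigma)$-action with $\rho_0 = \theta_1^{-1}\iota_1$ to bring $\theta$ to the form $(\iota_1, \xi)$ for some chosen $\iota_1$, leaving a compatible $\iota_2$ to be selected later. The cut systems $\iota_1(\alphas)$ and $\xi(\alphas)$ on $\Sigma$ then have unimodular intersection matrix by part (1) and the ZHS hypothesis. Next I invoke the left $\cH_g$-action on $\theta_2$, which realizes arbitrary $GL_g(\ZZ)$ handleslides on the second cut system, and choose handleslides so that $\{[\iota_1(\alpha_i)], [\xi(\alpha_j)]\}$ becomes a symplectic basis of $H_1(\Sigma)$. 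I then realize this symplectic basis geometrically: extend $\iota_1(\alphas)$ to a geometric symplectic basis $\{\iota_1(\alpha_i), b_i\}$ with $[b_i] = [\xi(\alpha_i)]$, using the surjection $\cM(\Sigma) \twoheadrightarrow \mathrm{Sp}(2g, \ZZ)$ to find any geometric symplectic basis with the correct homology followed by a Torelli adjustment from Proposition \ref{prop:cut-system-torelli} to restore the first half to $\iota_1(\alphas)$. Defining $\iota_2$ by $\iota_2(\alpha_i) = b_i$ completes $\iota_1$ to a standard Heegaard splitting $\{\iota_1, \iota_2\}$ of $S^3$.

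Finally, I would apply Proposition \ref{prop:cut-system-torelli} a second time, now inside $\del H_g$, to the cut systems $\alphas$ and $\iota_2^{-1}\xi(\alphas)$, which have matching homology classes by construction since $[\iota_2^{-1}\xi(\alpha_i)] = [\alpha_i]$; this produces $\rho \in \cI_g$ with $\rho(\alpha_i) = \iota_2^{-1}\xi(\alpha_i)$, whence $(\iota_2\rho)(\alpha_i) = \xi(\alpha_i)$. The Heegaard splittings $(\iota_1, \iota_2\rho)$ and $(\iota_1, \xi)$ then have identical cut systems on $\Sigma$ and so represent the same Heegaard splitting of the same 3-manifold, placing them in a common $\cG_2$-orbit and giving $\theta \sim (\iota_1, \iota_2 \rho)$. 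The most delicate step, I expect, is the geometric realization of the symplectic basis with one Lagrangian half prescribed as specific curves rather than merely as homology classes: it is this step that genuinely combines the $\mathrm{Sp}(2g,\ZZ)$ surjection with the integral fine structure supplied by Proposition \ref{prop:cut-system-torelli}.
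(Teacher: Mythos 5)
The paper states Proposition \ref{prop:heegaard-ZHS} without proof, treating it as standard, so there is no argument of record to compare with; judged on its own, your proof is correct, and it is precisely the argument the surrounding text sets up: the chain complex gives (1) and (2), and for (3) you combine the $\cH_g$-action (handleslides realizing $GL_g(\ZZ)$ on the second cut system), the surjection $\cM(\Sigma)\rightarrow \mathrm{Sp}(2g,\ZZ)$, and Proposition \ref{prop:cut-system-torelli}, exactly the ingredients the paper later reuses in Proposition \ref{prop:pseudo-tri-standardize}. The one step you leave implicit is the final one: two homeomorphisms $\del H_g \rightarrow \Sigma$ carrying the fixed cut system to the same curves differ by precomposition with an element of $\cH_g$ (a homeomorphism of $\del H_g$ preserving a cut system extends over the handlebody), which is what places $\{\iota_1,\xi\}$ and $\{\iota_1,\iota_2\rho\}$ in a common $\cG_2$-orbit; this is the same standard fact underlying the paper's unlabeled lemma that a Heegaard triple determines a unique $\cG_3$-orbit, so asserting it is consistent with the paper's level of detail.
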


Morita proved a stronger version of Proposition \ref{prop:heegaard-ZHS}, which we state in the current formalism.

\begin{theorem}[\cite{Morita-1}]
\label{thrm:Morita}
If $Y_{\theta}$ is an integral homology 3-sphere, then there exists some standard Heegaard decomposition of $S^3$ $\iota = \{\iota_1,\iota_2\}$ and an element $\rho \in \cK_g$ such that
\[\{ \theta_1,\theta_2\} \sim \{\iota_1,\iota_2 \rho\}.\]
\end{theorem}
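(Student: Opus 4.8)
The plan is to take the Torelli element already supplied by Proposition~\ref{prop:heegaard-ZHS}(3) and upgrade it to one lying in the Johnson kernel, by cancelling its Johnson image against contributions coming from the handlebody groups of the two sides of the splitting. Write $H = H_1(\Sigma;\ZZ)$ with its symplectic intersection pairing, and recall the closed Johnson homomorphism $\bar\tau\colon \cI_g \to \wedge^3 H / H$, obtained from the homomorphism $\tau$ of \cite{Johnson2} through the exact sequence $1 \to \pi_1(UT\Sigma_g) \to \cI_{g,1} \to \cI_g \to 1$. On $\cI_g$ it is an honest homomorphism, since the Torelli group acts trivially on $H$ so the cocycle term vanishes, and $\ker\bar\tau = \cK_g$ (this descends from the bounded statement in \cite{Johnson2}). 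We may assume $g \ge 3$: for $g \le 2$ one has $\cI_g = \cK_g$ and the conclusion is already Proposition~\ref{prop:heegaard-ZHS}(3) (or one may stabilize). So it is enough to move $\rho$, without leaving its $\cG_2$-equivalence class, into $\ker\bar\tau$.

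First I would record the room the equivalence relation provides. By Proposition~\ref{prop:heegaard-ZHS}(3), $\{\theta_1,\theta_2\} \sim \{\iota_1, \iota_2\rho\}$ with $\iota = \{\iota_1,\iota_2\}$ a standard Heegaard decomposition of $S^3$ and $\rho \in \cI_g$. For $\lambda = 1,2$ let $\cA_\lambda \le \cM(\Sigma)$ be the subgroup of classes extending over the handlebody in which the curves $\iota_\lambda(\alpha_1),\dots,\iota_\lambda(\alpha_g)$ bound disks, and let $L_\lambda = \langle [\iota_\lambda(\alpha_i)]\rangle \subset H$ be its Lagrangian. Since $\iota$ is standard, $\{\iota_1(\alpha_i), \iota_2(\alpha_i)\}$ is a geometric symplectic basis, so $L_1 \oplus L_2 = H$. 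Tracking the $\cG_2$-action — a left $\cH_g$-move on the second coordinate, and a left $\cH_g$-move on the first coordinate compensated by the $\cM(\Sigma)^{op}$-move — one checks that $\{\iota_1, \iota_2\rho\} \sim \{\iota_1, \iota_2(\sigma\rho\nu)\}$ for any $\sigma \in \cA_1$ and $\nu \in \cA_2$. Restricting to $\sigma \in \cA_1 \cap \cI_g$ and $\nu \in \cA_2 \cap \cI_g$ keeps the middle factor in $\cI_g$, and then $\bar\tau(\sigma\rho\nu) = \bar\tau(\sigma) + \bar\tau(\rho) + \bar\tau(\nu)$.

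The heart of the matter is the claim that $\bar\tau(\cA_\lambda \cap \cI_g)$ contains the image of $L_\lambda \wedge \wedge^2 H$ in $\wedge^3 H / H$. To produce such elements I would use bounding-pair maps arising from $3$-chains: if $(a,b,c)$ is a $3$-chain in $\Sigma$ one of whose curves bounds a disk in the $\lambda$-th handlebody, then by Lemma~\ref{lemma:3-chain-twist} (via Lemma~\ref{lemma:annular-twist}) the associated bounding-pair map extends over that handlebody, so it lies in $\cA_\lambda$; being a bounding-pair map it lies in $\cI_g$; and by Lemma~\ref{lemma:Johnson-3chain} its Johnson image is $[a]\wedge[b]\wedge[c]$, in which the disk-bounding curve contributes a class in $L_\lambda$. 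One then checks, by a computation with the symplectic form (valid for $g \ge 3$, with low genus handled by stabilization), that as $(a,b,c)$ ranges over all such $3$-chains the vectors $[a]\wedge[b]\wedge[c]$ span $L_\lambda\wedge\wedge^2 H$ modulo $H$; this uses a realizability statement for $3$-chains with a prescribed intersection pattern, in the spirit of Proposition~\ref{prop:cut-system-torelli}.

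Granting the claim, the splitting $L_1 \oplus L_2 = H$ gives $\overline{L_1\wedge\wedge^2 H} + \overline{L_2\wedge\wedge^2 H} = \wedge^3 H / H$, since a wedge $\ell_1\wedge\ell_2\wedge\ell_3$ of basis vectors adapted to $H = L_1 \oplus L_2$ has its first factor in $L_1$ or in $L_2$, and these decomposables span $\wedge^3 H$. Hence we can choose $\sigma \in \cA_1\cap\cI_g$ and $\nu \in \cA_2\cap\cI_g$ with $\bar\tau(\sigma) + \bar\tau(\nu) = -\bar\tau(\rho)$, so that $\sigma\rho\nu \in \ker\bar\tau = \cK_g$; combining with $\{\theta_1,\theta_2\}\sim\{\iota_1,\iota_2\rho\}$ yields $\{\theta_1,\theta_2\}\sim\{\iota_1,\iota_2(\sigma\rho\nu)\}$ with $\sigma\rho\nu\in\cK_g$, as desired. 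I expect the spanning computation inside the claim to be the principal obstacle: one must verify that the $3$-chain constraints $i(a,b)=i(b,c)=1$, $i(a,c)=0$ — which on homology force $[a]\cdot[b]=[b]\cdot[c]=\pm1$ and $[a]\cdot[c]=0$ — do not, even together with the disk-bounding condition, prevent the $[a]\wedge[b]\wedge[c]$ from filling out the required subspace. A secondary, purely bookkeeping, point is the precise identification of the bounded and closed Johnson homomorphisms and the verification that the handlebody-group moves invoked above are exactly those realized inside the $\cG_2$-action.
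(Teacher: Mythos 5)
Your argument is essentially Morita's original one, which is also the route this paper takes implicitly: the paper does not reprove Theorem~\ref{thrm:Morita} (it cites \cite{Morita-1}), but your skeleton — start from the Torelli element of Proposition~\ref{prop:heegaard-ZHS}(3), then kill its Johnson image by composing with Torelli elements extending over the two handlebodies, produced from $3$-chain bounding pairs via Lemmas~\ref{lemma:Johnson-3chain} and~\ref{lemma:3-chain-twist} — is exactly the skeleton of the paper's own trisection analogue (Proposition~\ref{prop:Johnson-surjective} together with the correction step in the proof of Theorem~\ref{thrm:torelli-trisection}). The spanning computation you defer (that the classes $[a]\wedge[b]\wedge[c]$ from $3$-chains with a disk-bounding curve fill out enough of $\wedge^3 H_1(\Sigma)/H_1(\Sigma)$) is true and is precisely what the explicit element lists in the proof of Proposition~\ref{prop:Johnson-surjective} (following Morita's original lemma) verify, so your proposal is correct in approach with no essential deviation.
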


\subsection{Trisections of closed, smooth 4-manifolds}

Recall that a $(g;k_1,k_2,k_3)$ trisection $\cT_X$ of a closed, oriented, smooth 4-manifold is a decomposition $X = Z_1 \cup Z_2 \cup Z_3$, where each double intersection $H_{\lambda} = Z_{\lambda - 1} \cap Z_{\lambda}$ is a genus $g$ handlebody and $\Sigma = Z_1 \cap Z_2 \cap Z_3$ is a closed, oriented genus $g$ surface.  We can identify each double intersection with a fixed, abstract $H_g$ and the central surface with a fixed, abstract $\Sigma_g$.  These identification induce a triple of maps
\[\phi_{\lambda}: \del H_g \rightarrow \Sigma_g\]
for $\lambda = 1,2,3$.  Thus, every trisection $\cT_X$ determines a triple $\phi = \{\phi_1,\phi_2,\phi_3\}$ of homeomorphisms.  This triple uniquely describes $X$ up to diffeomorphism.  Furthermore, the pair $\{\phi_{\lambda},\phi_{\lambda+1}\}$ determines a Heegaard splitting of $\#_{k_{\lambda}} S^1 \times S^2$.  

The triple $\phi$ depends on choices.  For any choices $\mu_{\lambda} \in \cH_g$ and $\rho \in \cM(\Sigma_g)$, the triple $\{\mu_1 \phi_1 \rho, \mu_2 \phi_2 \rho, \mu_3 \phi_3 \rho\}$  determines a diffeomorphic 4-manifold.  Let $\Phi_g$ denote the set of triples $\phi = \{\phi_1,\phi_2,\phi_3\}$ where each $\phi_{\lambda}: \del H_g \rightarrow \Sigma$ is an isotopy class of homeomorphisms.  The group $\cH_g \times \cH_g \times \cH_g$ acts on $\Phi_g$ on the left by the rule
\[ (\mu_1,\mu_2,\mu_3) \cdot \{\phi_1,\phi_2,\phi_3\} = \{\mu_1 \phi_1, \mu_2 \phi_2, \mu_3 \phi_3\}\]
and the group $\cM(\Sigma)$ acts on $\Phi_g$ on the right by the rule
\[ \{\phi_1,\phi_2,\phi_3\} \cdot \rho = \{\phi_1 \rho,\phi_2 \rho, \phi_3 \rho\}\]
Let $\cG_{3} = \cH_g \times \cH_g \times \cH_g \times \cM(\Sigma)^{op}$.  We can combine the above actions into a single $\cG_3$ action on $\Phi_g$ and each diffeomorphism class of trisection corresponds with a unique $\cG_3$-orbits.  We refer to the action of $\cM(\Sigma)$ as {\it global reparametrization} and the action of $\cH_g \times \cH_g \times \cH_g$ as a {\it handlebody diffeomorphism}.

\begin{remark}
If $\phi = \{\phi_1,\phi_2,\phi_3\}$ is a triple arising from a trisection of a closed 4-manifold, then by Theorem \ref{thrm:Waldhausen} and Corollary \ref{cor:Waldhausen}, each triple $\{\phi_{\lambda},\phi_{\lambda+1}\}$ is $\cG_2$-equivalent to a standard $\{\iota_1,\iota_2\}$.  However, it is not true in general that $\phi$ is $\cG_3$-equivalent to some $\{\iota_1,\iota_2,\iota_3\}$ where every pair $\{\iota_{\lambda},\iota_{\lambda+1}\}$.  It is known that only connected sums of $\CP^2, \overline{\CP}^2$ and $S^2 \times S^2$ admit this property.
\end{remark}

It is often convenient to work just in $\cM(\Sigma_g)$ and interpret $\cH_g$ as a subgroup of $\cM(\Sigma_g)$.  Let $\phi =  \{\phi_{\lambda}\}$ be a fixed triple.  Define subgroups of $\cM(\Sigma)$
\begin{align*}
\cA_{\phi} & \coloneqq \{ \phi^{-1}_{1} \mu \phi_1 : \mu \in \cH_g \} \\
\cB_{\phi} & \coloneqq \{ \phi^{-1}_{2} \mu \phi_3 : \mu \in \cH_g \} \\
\cC_{\phi} & \coloneqq \{ \phi^{-1}_{3} \mu \phi_3 : \mu \in \cH_g \} \\
\end{align*}
Consequently, the orbits of the action of $\cH_g \times \{1\} \times \{1\}$ on the left are precisely the orbits of the action of $\cA$ on the right.  Similarly for the orbits of $\cB$ and $\cC$.  Furthermore, let $\cA\cB_{\phi} = \cA_{\phi} \cap \cB_{\phi}$.  

\begin{lemma}
\label{lemma:AB-equiv}
Fix a triple $\phi = \{\phi_1,\phi_2,\phi_3\}$.  If $\rho \in \cA\cB_{\phi}$, then 
\[ \{\phi_1,\phi_2,\phi_3\} \sim \{\phi_1,\phi_2,\phi_3 \rho\}\]
\end{lemma}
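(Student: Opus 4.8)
The plan is to exhibit the passage from $\{\phi_1,\phi_2,\phi_3\}$ to $\{\phi_1,\phi_2,\phi_3\rho\}$ as a composition of one global reparametrization and one handlebody diffeomorphism supported on the first two sectors. First I would apply the right $\cM(\Sigma)$-action by $\rho$, obtaining $\{\phi_1,\phi_2,\phi_3\}\cdot\rho = \{\phi_1\rho,\phi_2\rho,\phi_3\rho\}$. The third coordinate now has the desired form, but the first two have been disturbed by $\rho$; the idea is to repair them using left multiplication by maps in $\cH_g$, which is precisely what a handlebody diffeomorphism allows.

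The key point is that $\rho \in \cA_\phi$ means exactly that $\phi_1\rho\phi_1^{-1} \in \cH_g$, and since $\cA_\phi = \phi_1^{-1}\cH_g\phi_1$ is a subgroup of $\cM(\Sigma)$ (a conjugate of $\cH_g$), the same holds for $\rho^{-1}$. Hence $\mu_1 := \phi_1\rho^{-1}\phi_1^{-1}$ lies in $\cH_g$ and satisfies $\mu_1\phi_1\rho = \phi_1$. Symmetrically, $\rho \in \cB_\phi$ yields $\mu_2 := \phi_2\rho^{-1}\phi_2^{-1} \in \cH_g$ with $\mu_2\phi_2\rho = \phi_2$. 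Applying the handlebody diffeomorphism $(\mu_1,\mu_2,1) \in \cH_g \times \cH_g \times \cH_g$ on the left then gives
\[(\mu_1,\mu_2,1)\cdot\{\phi_1\rho,\phi_2\rho,\phi_3\rho\} = \{\phi_1,\phi_2,\phi_3\rho\},\]
which establishes the $\cG_3$-equivalence $\{\phi_1,\phi_2,\phi_3\} \sim \{\phi_1,\phi_2,\phi_3\rho\}$.

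I do not expect any genuine obstacle here beyond keeping the left versus right conventions straight: the entire content of the lemma is the observation that belonging to $\cA\cB_\phi = \cA_\phi \cap \cB_\phi$ is exactly the condition ensuring that a single global reparametrization by $\rho$ can be corrected, on the first and second sectors simultaneously, by maps that extend over $H_g$, leaving as net effect only a right twist of the third gluing map.
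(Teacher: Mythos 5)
Your proof is correct and is essentially the paper's own argument: the paper likewise applies the global reparametrization by $\rho$ and then cancels it on the first two coordinates using $a=\phi_1\rho\phi_1^{-1},\,b=\phi_2\rho\phi_2^{-1}\in\cH_g$ (your $\mu_1,\mu_2$ are just their inverses). The only cosmetic difference is that you spell out the inversion step and the left/right bookkeeping explicitly.
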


\begin{proof}
Since $\rho \in \cA\cB_{\phi} = \cA_{\phi} \cap \cB_{\phi}$, there exist $a,b \in \cH_g$ such that
\[ a \phi_1 = \phi_1 \rho \qquad b \phi_2 = \phi_2 \rho\]
Thus
\[ \{\phi_1,\phi_2,\phi_3\} \sim \{\phi_1 \rho,\phi_2 \rho,\phi_3 \rho\} \sim \{a \phi_1,b\phi_2,\phi_3 \rho\} \sim \{\phi_1,\phi_2,\phi_3 \rho \}\]
\end{proof}

\subsection{Heegaard diagrams}  For many purposes, such as computing the algebraic topology of $X$, it is often easier and sufficient to encode $\phi_i : \del H_g \rightarrow \Sigma_g$ by its action on a cut system. 

Given a collection of maps $\{ \phi_{\lambda}\}$ that determine a trisection of a 4-manifold $X$, we obtain a {\it trisection diagram} $(\Sigma, \alphas_1,\alphas_2,\alphas_3)$, where $\alpha_{\lambda} = \phi_{\lambda}(\alphas)$.  To agree with conventions in Heegaard Floer theory, we often denote the trisection diagram as $(\Sigma,\alphas,\betas,\gammas)$ instead.  A collection of the form $(\Sigma,\alphas,\betas,\gammas)$, where each of $\alphas,\betas$ and $\gammas$ is a cut system of curves, is known as a {\it Heegaard triple}.  Conversely, tiven a Heegaard triple $(\Sigma,\alphas_1,\alphas_2,\alphas_3)$, we can choose a (nonunique) triple of homeomorphisms $\{\phi_{\lambda}: \del H_g \rightarrow \Sigma_g\}$ such that $\alphas_{\lambda}$ is the image under $\phi_{\lambda}$ of a fixed cut system of curves.  

\begin{lemma}
A Heegaard triple $(\Sigma,\alphas,\betas,\gammas)$ determines a unique $\cG_3$-orbit in $\Phi_g$.
\end{lemma}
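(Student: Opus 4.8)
The plan is to show two things: first, that every Heegaard triple $(\Sigma,\alphas,\betas,\gammas)$ arises from \emph{some} triple $\phi = \{\phi_1,\phi_2,\phi_3\} \in \Phi_g$, and second, that any two such triples are $\cG_3$-equivalent. Existence is essentially the converse construction already recalled in the text: since each of $\alphas,\betas,\gammas$ is a cut system of curves on $\Sigma_g$ (its complement has genus $0$), and since the boundary $\del H_g$ of the fixed abstract handlebody carries the fixed cut system $\alphas_0 := \del\cD$, there is a homeomorphism $\del H_g \to \Sigma_g$ taking $\alphas_0$ to $\alphas$ — indeed, both complements are spheres with $g$ boundary components, so one builds the homeomorphism on the cut-open surfaces and checks it glues up. Do this for each of the three cut systems to produce $\phi_1,\phi_2,\phi_3$ with $\phi_\lambda(\alphas_0) = \alphas_\lambda$. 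This shows the assignment $\phi \mapsto (\Sigma, \phi_1(\alphas_0), \phi_2(\alphas_0), \phi_3(\alphas_0))$ from $\Phi_g$ to Heegaard triples is surjective, and it is visibly constant on $\cG_3$-orbits (a global reparametrization $\rho$ moves all three cut systems simultaneously to $\phi_\lambda\rho(\alphas_0)$... wait — one must be slightly careful: $\rho$ acts on the right on $\del H_g$, so $\phi_\lambda\rho(\alphas_0)$ need not equal $\alphas_\lambda$). The right resolution is to note that the \emph{diffeomorphism type} of the trisected $4$-manifold, hence the relevant equivalence, is what $\cG_3$ preserves, and a Heegaard triple records exactly the data that matters up to isotopy of the cut systems; I will phrase the statement as: the map sending a $\cG_3$-orbit to the isotopy class of the Heegaard triple is well-defined, and what remains is injectivity.

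So the heart of the matter is uniqueness: if $\phi, \phi' \in \Phi_g$ satisfy $\phi_\lambda(\alphas_0) \simeq \phi'_\lambda(\alphas_0)$ (isotopic, or even equal) as cut systems on $\Sigma$ for each $\lambda = 1,2,3$, then $\phi \sim \phi'$ under $\cG_3$. First I would reduce to the case of honest equality of cut systems, absorbing the isotopies into the $\cM(\Sigma)$-factor or into the $\cH_g$-factors (an isotopy of a cut system extends to an ambient isotopy of $\Sigma$, which is trivial in $\cM(\Sigma)$, so this step is cheap). Now, for each $\lambda$, the composite $\psi_\lambda := \phi_\lambda^{-1}\phi'_\lambda$ is a self-homeomorphism of $\del H_g$ that sends the fixed cut system $\alphas_0 = \del \cD$ to itself \emph{up to isotopy}. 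The key classical input is that such a homeomorphism — one preserving (the isotopy class of) a cut system of curves bounding disks in $H_g$ — extends across the handlebody $H_g$, i.e. $\psi_\lambda \in \cH_g$; equivalently, $\cH_g$ is exactly the stabilizer in $\cM(\del H_g)$ of the isotopy class of the disk system, which is precisely the content of the handleslide discussion earlier in this section (two cut systems of disks with isotopic boundaries differ by handleslides and an isotopy, and handleslides are realized by homeomorphisms of $H_g$). Granting this, write $\phi'_\lambda = \phi_\lambda \psi_\lambda^{-1}$... again a side issue: the $\cH_g$-action in $\cG_3$ is on the \emph{left}, as $\mu_\lambda\phi_\lambda$, so I want $\psi_\lambda := \phi'_\lambda\phi_\lambda^{-1}$ to lie in $\cH_g$ instead — but this is a self-homeomorphism of $\Sigma$, not of $\del H_g$; the correct formulation, matching the subgroups $\cA_\phi,\cB_\phi,\cC_\phi$ defined just above, is that $\phi'_\lambda\phi_\lambda^{-1}$ conjugates into $\cH_g$ exactly when $\phi_\lambda$ and $\phi'_\lambda$ carry $\alphas_0$ to isotopic cut systems. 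With this in hand, $(\mu_1,\mu_2,\mu_3)\cdot\phi = \phi'$ for suitable $\mu_\lambda \in \cH_g$, no global reparametrization needed, and we are done.

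The step I expect to be the genuine obstacle is the one I flagged: proving that a mapping class of $\del H_g$ fixing the isotopy class of the disk system boundaries lies in $\cH_g$. Everything else is bookkeeping about left/right actions and promoting isotopies of curves to ambient isotopies. For this step I would argue exactly as in the handleslide paragraph: given $\psi$ fixing the isotopy class of $\alphas_0$, the disk system $\cD$ and $\psi^{-1}(\cD)$ — or rather a disk system whose boundary is $\psi(\alphas_0)$, isotoped back to $\alphas_0$ — are two cut systems of disks in $H_g$ with the \emph{same} boundary curves, hence related by a sequence of handleslides (a standard fact for handlebodies, cf.\ the text's remark that $\phi(\cD)$ and $\cD$ are connected by handleslides when $\phi \in \cH_g$ — I'd want the converse, that any two cut-disk-systems with isotopic boundary are handleslide-equivalent, which is classical). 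Each handleslide is realized by a self-homeomorphism of $H_g$ restricting to the identity (up to isotopy) on the shared boundary curve system, and composing these realizes $\psi$ — up to an element already in $\cH_g$ — as the restriction of a homeomorphism of $H_g$. I would cite \cite{FM-primer} or the standard references for the handleslide fact and for "isotopic simple closed curves are ambient isotopic," and keep the write-up short, since the lemma is really a repackaging of facts assembled earlier in the section rather than something new.
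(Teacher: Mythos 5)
The paper states this lemma without proof, so there is no argument of its own to compare against; your overall route is the standard one it implicitly relies on: realize any Heegaard triple by some $\phi\in\Phi_g$ via the change-of-coordinates principle, and for uniqueness show that for each $\lambda$ the comparison map between two realizing triples preserves the isotopy class of $\alphas_0=\del\cD$ and hence lies in $\cH_g$, so that $\phi'=(\mu_1,\mu_2,\mu_3)\cdot\phi$ with no global reparametrization needed. That reduction and the conclusion are correct (minor slip: the complement of a cut system is a sphere with $2g$, not $g$, boundary circles).

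The problem is that the step you yourself flag as the genuine obstacle --- a mapping class of $\del H_g$ fixing the isotopy class of $\del\cD$ extends over $H_g$ --- is justified incorrectly. First, ``$\cH_g$ is exactly the stabilizer of the isotopy class of the disk system'' is false: a handleslide is realized by a homeomorphism of $H_g$, so it lies in $\cH_g$, yet it changes the isotopy class of $\del\cD$; only the containment (stabilizer $\subset\cH_g$) is true, and that containment is not ``equivalent'' to the extension statement --- it \emph{is} the extension statement, so the remark is circular. (The same overstatement recurs in your ``exactly when'' criterion for $\phi'_\lambda\phi_\lambda^{-1}$; only the ``if'' direction holds or is needed.) Second, the handleslide mechanism does not deliver the extension: two cut systems of disks in $H_g$ with the same boundary curves are already isotopic (the union of two such disks is a sphere bounding a ball --- no handleslides are involved), and a handleslide homeomorphism is \emph{not} isotopic to the identity on the shared boundary curve system, so ``composing these realizes $\psi$ as the restriction of a homeomorphism of $H_g$'' does not follow from what you wrote. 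The correct short argument is: isotope $\psi$ so that $\psi(\alphas_0)=\alphas_0$ setwise, extend $\psi$ across each disk of $\cD$ by coning, cut $H_g$ along $\cD$ to obtain a $3$-ball, extend the induced homeomorphism of its boundary $2$-sphere by Alexander's trick, and reassemble. Finally, your first-paragraph claim that a $\cG_3$-orbit determines the isotopy class of the Heegaard triple is false (acting by $\rho\in\cM(\Sigma)$, or by an $\cH_g$-factor realizing a handleslide, changes the diagram), but that direction is not what the lemma asserts, and your uniqueness argument never uses it; with the extension step repaired as above, the proof stands.
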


The cohomology ring of $X$ can be computed purely from the Heegaard triple $(\Sigma,\alphas,\betas,\gammas)$.  In particular, the cut systems $\alphas,\betas,\gammas$ determines a $g$-dimensional subspace $L_{\alpha},L_{\beta},L_{\gamma}$ in $H_1(\Sigma;\ZZ)$.  As detailed in \cite{FKSZ}, the cohomology ring $H^*(X;\ZZ)$ of $X$ is determined purely by the triple $L_{\alpha},L_{\beta},L_{\gamma}$, up to a symplectic automorphism of $H_1(\Sigma;\ZZ)$.

Let $(\Sigma,\alphas,\betas,\gammas)$ be a trisection diagram for a $(g;0,k,0)$-trisection.  Let $_{\alpha} Q_{\beta}$ be the $g \times g$-matrix of intersection numbers $\langle \alpha_i,\beta_j \rangle_{\Sigma}$ and define $_{\beta} Q_{\gamma}$ and $_{\gamma}  Q_{\alpha}$ similarly.  Then the intersection form $Q_X$ is given in matrix-form by the formula (\cite[Theorem 4.3]{FKSZ})
\begin{equation}
\label{eq:q-formula}
Q_X = (-1) \cdot _{\gamma} Q_{\beta} \cdot (_{\alpha} Q_{\beta})^{-1}  \cdot _{\gamma} Q_{\alpha}
\end{equation}

\begin{theorem}[\cite{FKSZ}]
\label{thrm:q-formula}
Let $\cT_X$ be a $(g;0,k,0)$-trisection of $X$.  Then $\cT_X$ admits a diagram $(\Sigma,\alphas,\betas,\gammas)$ such that
\begin{enumerate}
\item $(\Sigma,\alpha,\betas)$ is a standard Heegaard diagram for $S^3$.
\item In $H_1(\Sigma)$, we have
\[ [\gamma_i] = - [\alpha_i] - \sum_{j = 1}^g \widetilde{Q}_{i,j} [\beta_j]\]
where $Q$ is the intersection form of $X$ and $\widetilde{Q} = Q \oplus \langle 0 \rangle^k$.
\end{enumerate}
\end{theorem}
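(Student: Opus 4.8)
The plan is to normalize a diagram of $\cT_X$ in two stages and then read off the intersection form from formula~(\ref{eq:q-formula}); throughout I identify $\cT_X$ with its $\cG_3$-orbit of triples $\phi=\{\phi_1,\phi_2,\phi_3\}\in\Phi_g$ and am free to replace $\phi$ by anything in this orbit. \emph{Stage 1 (normalize $(\alphas,\betas)$).} The pair $\{\phi_1,\phi_2\}$ is exactly a Heegaard splitting of $\del Z_1=S^3$ (since $k_1=0$), so by Waldhausen's theorem (Theorem~\ref{thrm:Waldhausen}) it is $\cG_2$-equivalent to a standard pair: there are $\mu_1,\mu_2\in\cH_g$ and $\rho\in\cM(\Sigma)$ with $\{\mu_1\phi_1\rho,\mu_2\phi_2\rho\}$ standard. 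Applying the handlebody diffeomorphism $(\mu_1,\mu_2,1)$ and the global reparametrization $\rho$ to $\phi$, I may assume from the outset that $(\Sigma,\alphas,\betas)$ is a standard genus-$g$ Heegaard diagram of $S^3$, i.e.\ that $\{[\alpha_i],[\beta_i]\}$ is a symplectic basis of $H_1(\Sigma;\ZZ)$ with $\langle\alpha_i,\beta_j\rangle=\delta_{ij}$; this is conclusion~(1). The only further moves I use are handlebody diffeomorphisms $(1,1,\mu_3)$ with $\mu_3\in\cH_g$, which fix $\alphas$ and $\betas$ and alter $\gammas$ by a sequence of handleslides.

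\emph{Stage 2 (normalize $\gammas$ and compute $Q_X$).} Write $[\gamma_i]=\sum_j C_{ij}[\alpha_j]+\sum_j D_{ij}[\beta_j]$ with integer matrices $C,D$. I use three facts: (a) the subspace spanned by $[\gamma_1],\dots,[\gamma_g]$ is Lagrangian, so expanding $\langle\gamma_i,\gamma_j\rangle=0$ in the symplectic basis shows $CD^{T}$ is symmetric; (b) $(\Sigma,\gammas,\alphas)$ is a Heegaard diagram for $\del Z_3=S^3$, hence for a $\ZZ$-homology sphere, so by Proposition~\ref{prop:heegaard-ZHS} the matrix ${}_{\gamma}Q_{\alpha}$ of intersection numbers $\langle\gamma_i,\alpha_j\rangle$ is unimodular, and in the symplectic basis ${}_{\gamma}Q_{\alpha}=-D$, whence $D\in GL_g(\ZZ)$ (and, when $k=0$, the same applied to $(\Sigma,\betas,\gammas)=S^3$ gives $C\in GL_g(\ZZ)$); (c) a sequence of handleslides, reorderings, and curve-reorientations among the $\gamma_i$ realizes an arbitrary $A\in GL_g(\ZZ)$ acting on $([\gamma_1],\dots,[\gamma_g])$, i.e.\ replacing $(C,D)$ by $(AC,AD)$, and all these moves come from some $\mu_3\in\cH_g$, so they preserve $(\Sigma,\alphas,\betas)$. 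Now, in the case $k=0$, take $A=-C^{-1}$: this replaces $C$ by $-I$ and $D$ by $D'=-C^{-1}D$, and fact~(a) shows $D'$ is symmetric. Substituting into~(\ref{eq:q-formula}) the values ${}_{\alpha}Q_{\beta}=I$, ${}_{\gamma}Q_{\beta}=-I$, ${}_{\gamma}Q_{\alpha}=-D'$ yields $Q_X=-D'$, and hence $[\gamma_i]=-[\alpha_i]-\sum_j (Q_X)_{ij}[\beta_j]$, which is conclusion~(2) (with $\widetilde{Q}=Q_X$).

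\emph{The case $k>0$, and the main obstacle.} When $k>0$ the diagram $(\Sigma,\betas,\gammas)$ is $\#_k S^1\times S^2$ rather than $S^3$, the matrix $C$ has rank only $g-k$, and the step ``$A=-C^{-1}$'' is unavailable; here one first uses~(c) to arrange the last $k$ of the $\gamma$-curves to be isotopic to the last $k$ of the $\beta$-curves --- so that $(\Sigma,\betas,\gammas)$ is a standard $\#_k S^1\times S^2$ diagram on those handles --- and then runs the $k=0$ argument on the complementary $(g-k)\times(g-k)$ block, the direct-sum bookkeeping being exactly what $\widetilde{Q}=Q\oplus\langle 0\rangle^{k}$ records. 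The two genuinely non-formal points I anticipate are: verifying~(c), namely that every $A\in GL_g(\ZZ)$ is realized by an actual diffeomorphism of the handlebody $H_g$ (this is what makes the normal form attainable rather than merely algebraically desirable, and guarantees the altered diagram still presents $\cT_X$); and, for $k>0$, cleanly isolating the $S^1\times S^2$-summands so that constraints~(a) and~(b) pin down the remaining block, after which formula~(\ref{eq:q-formula}) delivers $\widetilde{Q}$ with no further choices.
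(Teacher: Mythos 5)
Your Stage~1 and the $k=0$ half of Stage~2 are sound, and they follow the natural route (the paper itself gives no proof of this statement --- it is imported from \cite{FKSZ}, cf.\ the remark in the proof of Proposition~\ref{prop:pseudo-tri-standardize}): Waldhausen standardizes $(\alphas,\betas)$; the two $S^3$ conditions make ${}_{\gamma}Q_{\alpha}$ and ${}_{\beta}Q_{\gamma}$ unimodular; handleslides, reorderings and reorientations of the $\gammas$ are cut-system moves inside $H_{\gamma}$, so they preserve the trisection and realize any $A\in GL_g(\ZZ)$ on $([\gamma_1],\dots,[\gamma_g])$ (your flagged point (c) is a standard fact); and Equation~(\ref{eq:q-formula}), which is prior input (\cite[Thm.\ 4.3]{FKSZ}), identifies the remaining symmetric matrix with $-Q_X$. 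No circularity there, and the computation ${}_{\alpha}Q_{\beta}=I$, ${}_{\gamma}Q_{\beta}=-I$, ${}_{\gamma}Q_{\alpha}=-D'$ is correct.

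The gap is the $k>0$ case, and it is not just a matter of ``bookkeeping.'' First, the normal form you aim for there ($\gamma_i$ isotopic to $\beta_i$ on the last $k$ handles) is \emph{not} what conclusion~(2) asserts: with $\widetilde{Q}=Q\oplus\langle 0\rangle^k$ the last $k$ relations read $[\gamma_i]=-[\alpha_i]$, not $[\gamma_i]=[\beta_i]$, so ``the direct-sum bookkeeping'' does not close the argument. Worse, under the paper's own conventions the printed statement cannot be reached at all when $k>0$: for a $(g;0,k,0)$-trisection the pair $(\Sigma,\betas,\gammas)$ is a Heegaard splitting of $\del Z_2=\#_k S^1\times S^2$, while conclusion~(2) together with conclusion~(1) forces $\langle\beta_i,\gamma_j\rangle=\delta_{ij}$, i.e.\ a unimodular $\beta\gamma$-pairing, so $Y_{\beta\gamma}$ would be a homology sphere (already the genus-one trisection of $S^4$ with parameters $(1;0,1,0)$ has $[\gamma]=\pm[\beta]$, never $-[\alpha]$). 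The stated formula is consistent only when the degenerate pair is $(\gammas,\alphas)$, i.e.\ for the $(g;0,0,k)$ labeling; so for $k>0$ you must either prove the corrected normal form (e.g.\ use unimodularity of $D={-}\,{}_{\gamma}Q_{\alpha}$, which still holds since $k_3=0$, to left-multiply by $D^{-1}$ and get $[\gamma_i]=[\beta_i]+\sum_j C'_{ij}[\alpha_j]$ with $C'$ symmetric of corank $k$) or note explicitly that the indices in the statement need to be permuted. Finally, even in your intended route, ``isotopic to the last $k$ of the $\beta$-curves'' is a geometric claim that does not follow from the homological moves in (c); it would require a Haken/Waldhausen-type argument as in Corollary~\ref{cor:Waldhausen}, carried out without disturbing the already-standardized $(\alphas,\betas)$ --- but since only a homological normal form is needed, that detour should be avoided rather than justified.
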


Combining the above theorem with Proposition \ref{prop:cut-system-torelli}, we obtain

\begin{corollary}
\label{cor:trisection-torelli-equiv}
Let $X,Y$ be closed, smooth, oriented 4-manifolds such that $Q_X \cong Q_y$.  Suppose that $X,Y$ admit $(g;0,k,0)$-trisections $\cT_X$ and $\cT_Y$.  Then there exists a diagram $(\Sigma,\alphas,\betas,\gammas)$ for $\cT_X$ and an element $\rho \in \cI_g$ such that $(\Sigma,\alphas,\betas,\rho(\gammas))$ is a diagram for $\cT_Y$.
\end{corollary}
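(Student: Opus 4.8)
The plan is to combine Theorem~\ref{thrm:q-formula} with Proposition~\ref{prop:cut-system-torelli}, as the sentence introducing the statement suggests. Theorem~\ref{thrm:q-formula} supplies, for each of $\cT_X$ and $\cT_Y$, a trisection diagram built on a standard genus-$g$ Heegaard diagram of $S^3$ in which the homology classes of the third cut system are completely determined by the intersection form. I would first arrange that the two diagrams are built on \emph{the same} triple $(\Sigma,\alphas,\betas)$ and record \emph{the same} intersection matrix, so that their $\gamma$-curves are homologous curve by curve, and then invoke Proposition~\ref{prop:cut-system-torelli} to produce a Torelli element carrying the first $\gamma$-system onto the second.

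In detail: let $Q$ be a fixed integer matrix representing the common isomorphism class $Q_X\cong Q_Y$ and put $\widetilde{Q}=Q\oplus\langle 0\rangle^k$. Since the intersection form, and hence the matrix appearing in part~(2) of Theorem~\ref{thrm:q-formula}, is well defined only up to $\mathrm{GL}_g(\ZZ)$-congruence, the theorem furnishes a diagram $(\Sigma,\alphas,\betas,\gammas)$ for $\cT_X$ with $(\Sigma,\alphas,\betas)$ standard for $S^3$ and $[\gamma_i]=-[\alpha_i]-\sum_j\widetilde{Q}_{ij}[\beta_j]$, together with a diagram $(\Sigma',\alphas',\betas',\gammas')$ for $\cT_Y$ with $(\Sigma',\alphas',\betas')$ standard for $S^3$ and $[\gamma'_i]=-[\alpha'_i]-\sum_j\widetilde{Q}_{ij}[\beta'_j]$, using the same $\widetilde{Q}$. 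Now $\{\alpha_i,\beta_i\}$ and $\{\alpha'_i,\beta'_i\}$ are geometric symplectic bases of $\Sigma_g$ (and cutting along either yields a disk), so the change-of-coordinates principle gives a homeomorphism $\Psi\colon\Sigma'\to\Sigma$ with $\Psi(\alpha'_i)=\alpha_i$ and $\Psi(\beta'_i)=\beta_i$ as oriented curves. Pushing the second diagram forward by $\Psi$ is a global reparametrization, so $(\Sigma,\alphas,\betas,\Psi(\gammas'))$ is again a diagram for $\cT_Y$, and $[\Psi(\gamma'_i)]=\Psi_*[\gamma'_i]=-[\alpha_i]-\sum_j\widetilde{Q}_{ij}[\beta_j]=[\gamma_i]$ for every $i$. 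Finally, $\gammas$ and $\Psi(\gammas')$ are cut systems of curves, each of which extends to a geometric symplectic basis, so $[\gamma_1],\dots,[\gamma_g]$ are linearly independent; Proposition~\ref{prop:cut-system-torelli} then yields $\rho\in\cI_g$ with $\rho(\gamma_i)=\Psi(\gamma'_i)$ for all $i$, whence $\rho(\gammas)=\Psi(\gammas')$ and $(\Sigma,\alphas,\betas,\rho(\gammas))=(\Sigma,\alphas,\betas,\Psi(\gammas'))$ is a diagram for $\cT_Y$.

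The only step that is not purely formal is the assertion that both diagrams can be made to record the same matrix $\widetilde{Q}$ --- equivalently, that given the diagram Theorem~\ref{thrm:q-formula} supplies for $\cT_Y$, one can conjugate its recorded matrix by an arbitrary $P\in\mathrm{GL}_g(\ZZ)$ of block form $P_0\oplus I_k$ while keeping $(\Sigma',\alphas',\betas')$ standard for $S^3$. I would do this by simultaneous (``symmetric'') handle slides of the $\alpha$- and $\beta$-systems realizing the elementary matrices and signed permutations that generate $\mathrm{GL}_g(\ZZ)$: such moves fix the handlebodies $Z_1,Z_2$, hence preserve standardness, and they replace $\widetilde{Q}$ by $P^{T}\widetilde{Q}P$ in the formula $[\gamma'_i]=-[\alpha'_i]-\sum_j\widetilde{Q}_{ij}[\beta'_j]$. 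Granting this, the corollary follows at once from Theorem~\ref{thrm:q-formula}, the change-of-coordinates principle, and Proposition~\ref{prop:cut-system-torelli}.
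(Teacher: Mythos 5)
Your proof follows the paper's route exactly: the corollary is obtained there by combining Theorem \ref{thrm:q-formula} with Proposition \ref{prop:cut-system-torelli}, identifying the two standard genus-$g$ Heegaard diagrams of $S^3$ by a global reparametrization so that the two $\gamma$-systems become homologous curve by curve, and then applying the Torelli-group proposition. Your extra discussion of the $\mathrm{GL}_g(\ZZ)$-congruence ambiguity in the recorded matrix $\widetilde{Q}$ is care the paper leaves implicit (it in effect reads Theorem \ref{thrm:q-formula} as realizing any chosen matrix representative of the intersection form), and it does not alter the argument.
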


\subsection{Pseudotrisections}

We introduce the more general notion of a {\it pseudotrisection}.  Recall from the previous subsection that a trisection of a closed, smooth, oriented 4-manifold can be encoded by a triple of maps $\{\phi_{\lambda}: \del H_g \rightarrow \Sigma_g\}$ or a Heegaard triple $(\Sigma,\alphas,\betas,\gammas)$.  In an honest trisection, the 3-manifolds obtained as the union of any pair of handlebodies must by homeomorphic to $S^3$ or $\#_k S^1 \times S^2$ for some $k \geq 1$.  In a pseudotrisection, we relax the condition and allow two of the 3-manifolds to merely have the same $\ZZ$-homology as one of those manifolds.  

\begin{definition}
Let $(\Sigma,\alphas,\betas,\gammas)$ be a diagram that defines a $(g;0,k,0)$-pseudotrisection $p\cT$.  The {\it intersection form} $Q_{p\cT}$ of the pseudotrisection is the quadratic form determines by the formula in Equation \ref{eq:q-formula}.
\end{definition}

For each unimodular intersection form $Q$, we can build a pseudotrisection with intersection form $Q$ as follows:

\begin{proposition}
\label{prop:pseudo-Q}
Let $Q$ be a unimodular, symmetric bilinear form of rank $g$.  Then there exists a $(g+k;k,0,0)$ pseudotrisection $(\Sigma,\alphas_Q,\betas_Q,\gammas_Q)$ such that
\begin{enumerate}
\item $\alpha,\beta$ are standard,
\item $\beta,\gamma$ are standard, and
\item $[\gamma_i] = -[\alpha_i] - \sum_{j = 1}^g \widetilde{Q}_{i,j} [\beta_j]$
\end{enumerate}
where $\widetilde{Q} = Q \oplus \langle 0 \rangle^k$.
\end{proposition}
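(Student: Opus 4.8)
The plan is to build the diagram $(\Sigma,\alphas_Q,\betas_Q,\gammas_Q)$ by hand on a surface $\Sigma$ of genus $g+k$ and then read off each of the three listed properties from a short computation in $H_1(\Sigma;\ZZ)$. Fix a geometric symplectic basis $\{x_1,\dots,x_{g+k},y_1,\dots,y_{g+k}\}$ of curves on $\Sigma$, and set $\alphas_Q=\{x_1,\dots,x_{g+k}\}$ and $\betas_Q=\{y_1,\dots,y_{g+k}\}$. Then $(\Sigma,\alphas_Q,\betas_Q)$ is, by inspection, a standard genus-$(g+k)$ Heegaard diagram for $S^3$, so property (1) holds; write $H_\alpha$, $H_\beta$ for the handlebodies with meridian systems $\alphas_Q$, $\betas_Q$. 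Since $\widetilde Q=Q\oplus\langle 0\rangle^k$ has $k$ vanishing rows and columns, the curves $\gamma_i$ for $g<i\le g+k$ will just be $\overline{x_i}$ (orientation reversed), contributing the $S^1\times S^2$ summands; the content is in producing $\gamma_1,\dots,\gamma_g$.

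The idea is to obtain $\gammas_Q$ by applying to the cut system dual to $\betas_Q$ a surface homeomorphism that extends across $H_\beta$. Consider the automorphism of $H_1(\Sigma;\ZZ)$ fixing each $[y_i]$ and sending $[x_i]\mapsto[x_i]+\sum_j\widetilde Q_{ij}[y_j]$; because $\widetilde Q$ is symmetric, this map preserves the intersection pairing, hence is induced by a mapping class $X_Q\in\cM(\Sigma)$. The subgroup of the symplectic group consisting of such transformations is generated by the transvections along the classes $[y_i]$ and $[y_i]+[y_j]$, and these are realized by Dehn twists along meridians of $H_\beta$ (namely $y_i$ and the boundary of a band sum of meridian disks of $H_\beta$). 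Such twists extend across $H_\beta$, and the relevant meridians can be chosen disjoint from all of $y_1,\dots,y_{g+k}$, so $X_Q$ can be taken to extend across $H_\beta$ and to fix each curve of $\betas_Q$. Now set $\gamma_i:=X_Q(\overline{x_i})$ for all $i$ (so indeed $\gamma_i=\overline{x_i}$ for $i>g$). This is a cut system, being the image of one under a homeomorphism, and
\[[\gamma_i]=X_Q(-[x_i])=-[x_i]-\sum_j\widetilde Q_{ij}[y_j]=-[\alpha_i]-\sum_j\widetilde Q_{ij}[\beta_j],\]
which is property (3). Since $X_Q$ fixes $\betas_Q$, the pair $(\Sigma,\betas_Q,\gammas_Q)$ is the image under the homeomorphism $X_Q$ of the standard diagram $(\Sigma,\betas_Q,\{\overline{x_i}\})$ for $S^3$, hence is itself a standard Heegaard diagram for $S^3$; this is property (2).

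It then remains to confirm that $(\Sigma,\alphas_Q,\betas_Q,\gammas_Q)$ is a $(g+k;k,0,0)$ pseudotrisection with intersection form $Q$. The pairwise unions $Y_{\alpha\beta}$ and $Y_{\beta\gamma}$ are honestly $S^3$ by properties (1) and (2). For the remaining pair, the matrix of intersection numbers is $\langle\gamma_i,\alpha_j\rangle_\Sigma=\widetilde Q_{ij}$; as $Q$ is unimodular, $\widetilde Q$ has torsion-free cokernel $\ZZ^k$, so by Proposition~\ref{prop:heegaard-ZHS} the manifold $Y_{\gamma\alpha}$ has the integral homology of $\#_k S^1\times S^2$ (of $S^3$ when $k=0$). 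Hence the triple defines a pseudotrisection, and feeding ${}_{\alpha}Q_{\beta}=I$, ${}_{\gamma}Q_{\beta}=-I$, ${}_{\gamma}Q_{\alpha}=\widetilde Q$ into Equation~\ref{eq:q-formula}, or equivalently computing the cohomology ring from the Lagrangian triple $(L_\alpha,L_\beta,L_\gamma)$ as in~\cite{FKSZ}, gives intersection form $\widetilde Q$, whose nondegenerate part is $Q$.

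I expect the second step to be the crux: realizing $\gammas_Q$ as an honest cut system with the prescribed homology classes while keeping $(\Sigma,\betas_Q,\gammas_Q)$ standard amounts precisely to the claim that the homological shear by $\widetilde Q$ is carried by a mapping class extending across $H_\beta$, and this is exactly the place where the symmetry of $Q$ is needed — an asymmetric matrix would not even determine a symplectic transformation, and the shear could not be realized without disturbing $\betas_Q$. Everything else reduces to bookkeeping of band sums and direct computation in $H_1(\Sigma;\ZZ)$.
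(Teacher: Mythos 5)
Your genus-$g$ core is essentially the paper's own argument. The paper also realizes the homological shear $[x_i]\mapsto [x_i]+\sum_j Q_{i,j}[y_j]$ by Dehn twists along pushoffs $b_i$ of the $\beta$-curves and along band sums $b_{i,j}$, all chosen disjoint from $\betas$, so that the $\beta\gamma$ pair stays geometrically standard while the $\gamma$-classes become $-[\alpha_i]-\sum_j Q_{i,j}[\beta_j]$; it just packages this as the explicit word $\prod T_{b_i}^{Q_{i,i}-1}\prod (T_{b_{i,j}}T_{b_i}^{-1}T_{b_j}^{-1})^{Q_{i,j}}$ applied to the $\gamma$-curves of the standard trisection diagram of $\#_g\CP^2$, rather than as your appeal to generation of the symmetric-shear subgroup of the symplectic group by transvections along $y_i$ and $y_i+y_j$; the two are interchangeable, and your observation that such twists fix $\betas$ and extend over $H_\beta$ is exactly the paper's.

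The genuine discrepancy is in the $k$ extra handles. The paper gets the $(g+k;k,0,0)$ type by stabilizing $k$ times with the standard $(1;1,0,0)$ trisection of $S^4$, whose diagram is $(T^2,\alpha,\alpha,\beta)$: on each new handle the new $\alpha$- and $\beta$-curves are parallel and the new $\gamma$-curve is dual, so $Y_{\alpha\beta}\cong \#_k S^1\times S^2$ honestly, $Y_{\beta\gamma}\cong S^3$ honestly, and $Y_{\gamma\alpha}$ is the homology sphere carrying $Q$. You instead set $\gamma_i=\overline{x_i}$ on the new handles, i.e.\ the new $\gamma$-curves are parallel to the new $\alpha$-curves with $\beta$ dual. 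Then for $k\geq 1$ no pairwise union is homeomorphic to $\#_k S^1\times S^2$: you get $Y_{\alpha\beta}\cong Y_{\beta\gamma}\cong S^3$ while $Y_{\gamma\alpha}\cong M_Q\,\#\,(\#_k S^1\times S^2)$, where $M_Q$ is the homology sphere determined by $Q$ (the Poincar\'e sphere when $Q=E_8$), which is neither an honest $\#_k S^1\times S^2$ nor an integral homology sphere. Under the paper's definition of a pseudotrisection (one sector boundary homeomorphic to $\#_k S^1\times S^2$, the other two integral homology 3-spheres), your triple is therefore not a $(g+k;k,0,0)$ pseudotrisection, and it is not the normal form the paper uses downstream: Proposition \ref{prop:Johnson-surjective} explicitly assumes that $x_{g-k+1},\dots,x_g$ bound in $H_\beta$ (extra $\beta$-curves parallel to $\alpha$-curves), and the proof of Theorem \ref{thrm:gsc-Rohlin} stabilizes with $(T^2,\alpha,\alpha,\beta)$. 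In fairness, condition (3) read literally, with $\widetilde Q=Q\oplus\langle 0\rangle^k$ having $k$ zero rows, points toward your layout, and the paper's own stabilization only satisfies (3) on the first $g$ indices; but the configuration actually needed is the paper's, and the repair to your argument is simply to run your construction at genus $g$ and then stabilize by connected sum with $(T^2,\alpha,\alpha,\beta)$ $k$ times, rather than placing the extra handles on the $\gamma\alpha$ side.
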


\begin{proof}

Start with the standard $(g,0)$ trisection $(\Sigma,\alphas,\betas,\gammas')$ of $\#_g \CP^2$.  The standard $(1,0)$-trisection of $\CP^2$ is given by the diagram $(\Sigma,\alpha,\beta,\gamma)$ where $\alpha,\beta$ intersect geometrically once and $\gamma$ represents the class $-[\alpha] - [\beta]$.  Taking the connected sum of $g$ copies gives the standard $(g,0)$ trisection of $\#_g \CP^2$.

Let $b_i, b_j$ be curves homotopic to $\beta_i$ and $\beta_j$, respectively, and disjoint from $\betas$.  In addition, let $b_{i,j}$ be a band sum of $b_i$ and $b_j$ disjoint from $\betas$.  Now set
\[\phi = \left(\prod_{i = 1}^g T_{b_i}^{Q_{i,i} - 1}\middle)\middle(\prod_{1 \leq i < j \leq g} (T_{b_{i,j}}T^{-1}_{b_i}T^{-1}_{b_j})^{Q_{i,j}}\right)\]
and let $\gammas = \phi(\gammas')$.  We can assume that each Dehn twist is along a curve disjoint from a fixed collection of curves $\{\beta_1,\dots,\beta_g\}$.  Thus, the geometric intersection number between $\betas$ and $\gammas$ never changes.  In particular, $\gammas$ and $\betas$ still form a geometric symplectic basis for the surface.  The induced map $\phi_*$ on $H_1(\Sigma_g)$ satisfies
\[\phi_*([\alpha_i] + [\beta_i]) = [\alpha_i] + \sum_{j = 1}^g Q_{i,j}[\beta_j].\]
Therefore, $(\Sigma,\alphas,\betas,\gammas)$ gives a $(g;0)$ pseudotrisection with intersection form $Q$.

To obtain a $(g+k;k,0,0)$ pseudotrisection, we can stabilize $k$ times by connected sum with the standard $(1;1,0,0)$ trisection of $S^4$.  A diagram for this trisection is $(T^2,\alpha,\alpha,\beta)$ where $(T^2,\alpha,\beta)$ is a standard Heegaard diagram for the genus 1 splitting of $S^3$.
\end{proof}

We refer to the pseudotrisection $\cT_Q = (\Sigma,\alphas_Q,\betas_Q,\gammas_Q)$ constructed in Proposition \ref{prop:pseudo-Q} as the {\it standard pseudotrisection} for $Q$.

\pagebreak

\begin{proposition}
\label{prop:pseudo-tri-standardize}
Let $Q$ be an intersection form.
\begin{enumerate}
\item Let $\cT = (\Sigma,\alphas',\betas',\gammas')$ a $(g;k,0,0)$-pseudotrisection with intersection form $Q$.  There exists an equivalent diagram $(\Sigma,\alphas,\betas,\gammas)$ for $\cT$ such that $\{[\alpha_i],[\beta_j]\}$ is a symplectic basis for $H_1(\Sigma)$ and 
\[ [\gamma_i] = - [\alpha_i] - \sum_{j = 1}^g \widetilde{Q}_{i,j} [\beta_j]\]
where $\widetilde{Q} = Q \oplus \langle 0 \rangle^k$.
\item  Let $\cT = (\Sigma,\alphas',\betas',\gammas')$ a $(g;k,0,0)$-pseudotrisection diagram satisfying the conclusions of part (1).  Then there exists some $\phi \in \cI_g$ such that $\cT_{Q,\phi} \coloneqq (\Sigma,\alphas_Q,\betas_Q,\phi(\gammas_Q))$ is equivalent to $\cT$.
\item Let $\cT_1 = (\Sigma_g,\alphas_1,\betas_1,\gammas_1)$ and $\cT_2 = (\Sigma_g,\alphas_2,\betas_2,\gammas_2)$ be $(g+k;k,0,0)$-pseudotrisections that satisfy the conclusions of part (1) and have the same intersection form $Q$.  Then there exists some $\phi \in \cI_g$ such that $\phi(\gammas_1) = \gammas_2$.
\end{enumerate}
\end{proposition}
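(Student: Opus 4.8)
The proposition comprises the substantive statement (1) together with two corollaries, (2) and (3), which will follow from it via Proposition \ref{prop:cut-system-torelli}. For (1) the plan is to pass to homology and use the classification of configurations of Lagrangian subspaces. By \cite{FKSZ} and Equation \ref{eq:q-formula}, the intersection form $Q$ is computed from the triple of Lagrangians $L_\alpha, L_\beta, L_\gamma \subset H_1(\Sigma)$ spanned by the three cut systems; conversely, a linear-algebra argument over $\ZZ$ shows that this triple is determined, up to a symplectic automorphism of $H_1(\Sigma)$, by $Q$ together with the combinatorial type of the triple, that is, which of the three pairs $(L_\alpha,L_\beta)$, $(L_\beta,L_\gamma)$, $(L_\gamma,L_\alpha)$ is complementary. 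For a $(g;k,0,0)$-pseudotrisection this type is prescribed: two of the pairwise unions are integral homology spheres (so the corresponding pair of Lagrangians is complementary) and the third is $\#_k S^1\times S^2$ (so there the two Lagrangians meet in a rank-$k$ subspace). The model configuration realizing this type is exactly the one appearing in Proposition \ref{prop:pseudo-Q}, namely $\{[\alpha_i],[\beta_j]\}$ a symplectic basis of $H_1(\Sigma)$ and $[\gamma_i] = -[\alpha_i] - \sum_j \widetilde{Q}_{i,j}[\beta_j]$ with $\widetilde{Q} = Q \oplus \langle 0\rangle^k$.

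To upgrade this to an equivalence of diagrams I would use the moves available on a (pseudo)trisection diagram: handleslides inside each of the three handlebodies and a global reparametrization of $\Sigma$. Since $\cM(\Sigma_g)$ surjects onto $\mathrm{Sp}(2g,\ZZ)$, a single global reparametrization carries $(L_\alpha,L_\beta,L_\gamma)$ onto the model triple; being an allowed move, it preserves the pseudotrisection type. After this reparametrization each of $\alphas,\betas,\gammas$ spans the corresponding model Lagrangian but may be the wrong basis of it; a round of handleslides inside each handlebody changes that cut system's homology basis by an arbitrary element of $\mathrm{GL}_g(\ZZ)$ while leaving the other two cut systems alone, and three such rounds put the diagram into model form. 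This proves (1).

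For (2), apply (1) to $\cT$ and recall that the standard pseudotrisection $\cT_Q$ of Proposition \ref{prop:pseudo-Q} is already in model form with the same intersection form; after further handlebody diffeomorphisms and a global reparametrization identifying the $\alpha$- and $\beta$-curves of the two diagrams, $\gammas$ and $\gammas_Q$ are cut systems with identical homology classes $-[\alpha_i] - \sum_j \widetilde{Q}_{i,j}[\beta_j]$, so Proposition \ref{prop:cut-system-torelli} provides $\phi \in \cI_g$ with $\phi(\gammas_Q) = \gammas$, i.e.\ $\cT_{Q,\phi} \sim \cT$. Statement (3) runs the same way with $\cT_1,\cT_2$ in place of $\cT_Q,\cT$: a global reparametrization matching the two symplectic bases also matches, via the common formula for the $\gamma$-classes, the two families $\gammas_1$ and $\gammas_2$ up to homology, whence Proposition \ref{prop:cut-system-torelli} yields $\phi\in\cI_g$ with $\phi(\gammas_1) = \gammas_2$. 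The part I expect to demand the most care is the passage from homology to geometry in (1): proving that two Lagrangian triples with the prescribed combinatorial type and the same $Q$ are carried onto one another by a symplectic automorphism (a Smith-normal-form computation over $\ZZ$ that must absorb the $\mathrm{GL}(\ZZ)$-congruence ambiguity in the form $Q$), and checking that each abstract move is realized by an honest handleslide or reparametrization without undoing the earlier normalizations — in particular that the $k$ cut-system curves that must become homologous to $\alpha$-curves can be arranged so without disturbing the rest.
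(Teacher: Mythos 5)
Your proposal is correct and takes essentially the same route as the paper: the paper's proof of part (1) simply defers to the standardization argument of \cite{FKSZ} behind Theorem \ref{thrm:q-formula}, noting that only homological (not geometric) standardness of $(\alphas,\betas)$ is required, which is precisely the Lagrangian-triple normalization you unpack via the surjection $\cM(\Sigma_g)\to \mathrm{Sp}(2g,\ZZ)$ and handleslides, and parts (2) and (3) are likewise deduced from Proposition \ref{prop:cut-system-torelli}, with (3) in the paper obtained by composing the two Torelli elements coming from (2). The one step to treat carefully is your appeal in (2) to a global reparametrization ``identifying the $\alpha$- and $\beta$-curves of the two diagrams'': matching these curves geometrically requires Waldhausen/Haken and hence that the corresponding pairwise union is honestly standard rather than merely a homology sphere, whereas what Proposition \ref{prop:cut-system-torelli} needs (and what the paper's own terse argument uses) is only agreement of the homology classes of the $\gamma$-systems.
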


\begin{proof}
The proof of (1) is essentially the same as the proof of Theorem \ref{thrm:q-formula}, which is a restatement of \cite[Theorem 4.4]{FKSZ}.  The only difference is that we cannot assume $(\Sigma,\alphas,\betas)$ is geometrically standard.  However, the proof only requires that $\{[\alpha_i],[\beta_i]\}$ form a symplectic basis for $H_1(\Sigma)$.  This follows since the 3-manifold constructed from the diagram $(\Sigma,\alphas,\betas)$ is an integral homology 3-sphere.

Part (2) follows from part (1) by Proposition \ref{prop:cut-system-torelli}.

To prove (3), note that by part (2) we can assume $\cT_1$ has a diagram $(\Sigma,\alphas_Q,\betas_Q,\phi_1(\gammas_Q))$ and $\cT_2$ has a diagram $(\Sigma,\alphas_Q,\betas_Q,\phi_2(\gammas_Q))$.  Now set $\phi = \phi_2 \circ \phi_1^{-1}$.
\end{proof}

\section{Trisections and Torelli group}

The main result of this section is an analogue of Morita's result (Theorem \ref{thrm:Morita}) for  trisections of closed, smooth 4-manifolds.

\begin{theorem}
\label{thrm:torelli-trisection}
Let $Q$ be a unimodular, symmetric bilinear form of rank $n$ over $\ZZ$.  Let $X$ be a closed, smooth 4-manifold with intersection form $Q$ and which admits a $(g;k,0,0)$ trisection $\cT_X$.  Let $(\Sigma,\alphas_Q,\betas_Q,\gammas_Q)$ be the pseudotrisection from Part (1).  Then there exists a map $\phi \in \cK_{g}$ such that $(\alphas_Q,\betas_Q,\phi(\gammas_Q))$ is a trisection diagram for the trisection $\cT_X$.
\end{theorem}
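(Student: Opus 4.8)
The plan is to start from the Torelli element already supplied by Proposition~\ref{prop:pseudo-tri-standardize}(2)---an element $\phi\in\cI_g$ for which $(\Sigma,\alphas_Q,\betas_Q,\phi(\gammas_Q))$ is a trisection diagram for $\cT_X$---and to improve it to an element of the Johnson kernel, mirroring Morita's argument in dimension three. The natural obstruction is the Johnson homomorphism $\tau$: working with $\cI_{g,1}$, and its quotient $\cI_g$, so that $\cK_g$ is exactly $\ker\tau$, the class $\tau(\phi)$, lying in the quotient $\bigwedge^3 H_1(\Sigma)/H_1(\Sigma)$, is the sole obstruction to $\phi\in\cK_g$. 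Thus the problem reduces to modifying $\phi$, without changing the $\cG_3$-equivalence class of the diagram, so as to annihilate $\tau(\phi)$.

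The first step is to pin down the available modifications. Since the first two cut systems of the diagram are held in the fixed standard position, any equivalence to another diagram with those same two cut systems is realized by a mapping class of $\Sigma$ extending simultaneously over the $\alpha$- and $\beta$-handlebodies, together with handleslides of the $\gamma$-curves; hence $\phi$ may be replaced by $\rho\,\phi\,\mu$ for any $\rho\in\cA\cB_\phi$ and any $\mu$ extending over the $\gamma$-handlebody (Lemma~\ref{lemma:AB-equiv}, together with handleslide-invariance of a Heegaard triple). It therefore suffices to exhibit, among these replacements, elements of $\cI_g$ whose Johnson images together span $\bigwedge^3 H_1(\Sigma)$ modulo $H_1(\Sigma)$. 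For this I would use bounding-pair maps from $3$-chains: by Lemma~\ref{lemma:3-chain-twist} the bounding-pair map of a $3$-chain $(a,b,c)$ one of whose curves bounds a disk in the $\alpha$-handlebody extends over that handlebody, with Johnson image $[a]\wedge[b]\wedge[c]$ by Lemma~\ref{lemma:Johnson-3chain}, and dually for the $\beta$-handlebody. Because $(\Sigma,\alphas_Q,\betas_Q)$ is a standard genus-$g$ Heegaard diagram for $S^3$, the associated Lagrangians satisfy $H_1(\Sigma)=L_\alpha\oplus L_\beta$ and every primitive class of $L_\alpha$, and of $L_\beta$, is carried by a meridian of the corresponding handlebody; a direct computation then shows that the Johnson classes available from $3$-chains through the $\alpha$-handlebody span $L_\alpha\wedge\bigwedge^2 H_1(\Sigma)$ and those through the $\beta$-handlebody span $L_\beta\wedge\bigwedge^2 H_1(\Sigma)$, and these span $\bigwedge^3 H_1(\Sigma)$ since $L_\alpha+L_\beta=H_1(\Sigma)$. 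Cancelling $\tau(\phi)$ by finitely many such moves would then put $\phi$ into $\cK_g$.

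The step I expect to be the real obstacle is arranging these bounding-pair moves to be compatible with all three handlebodies of the trisection at once, not just with one of its Heegaard splittings. A bounding-pair map made to extend over the $\alpha$-handlebody must also fix $\betas_Q$ up to handleslide, and the crudest way to do that---taking the $3$-chain disjoint from $\betas_Q$---forces all three homology classes into $L_\beta$, so that $[a]\wedge[b]\wedge[c]=0$ because $L_\alpha\cap L_\beta=0$; a more careful construction is needed. Here I would exploit that $\cT_X$ is an honest trisection: the Heegaard splittings $(\Sigma,\betas_Q,\phi(\gammas_Q))\cong S^3$ and $(\Sigma,\phi(\gammas_Q),\alphas_Q)\cong\#_k S^1\times S^2$ occurring inside it are standard, by Theorem~\ref{thrm:Waldhausen} and Corollary~\ref{cor:Waldhausen}. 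Applying Theorem~\ref{thrm:Morita} to the first splitting should rewrite $\phi$, after the allowed moves, as $\sigma\kappa$ with $\kappa\in\cK_g$ and $\sigma$ extending over the $\beta$-handlebody, and the second splitting should then constrain $\sigma$ relative to the $\alpha$-handlebody tightly enough that the residual $\sigma$ can be cleared using the $\cA\cB_\phi$-freedom of Lemma~\ref{lemma:AB-equiv}; once $\sigma$ is absorbed, $\phi$ lies in $\cK_g$ and $(\Sigma,\alphas_Q,\betas_Q,\phi(\gammas_Q))$ is the sought trisection diagram.
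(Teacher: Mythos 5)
Your opening and closing moves coincide with the paper's: produce $\psi\in\cI_g$ with $(\Sigma,\alphas_Q,\betas_Q,\psi(\gammas_Q))$ a diagram for $\cT_X$ (Proposition \ref{prop:pseudo-tri-standardize}), and then kill $\tau(\psi)$ using only corrections that do not change the equivalence class of the diagram, namely elements of $\cT\cA\cB$ (allowed by Lemma \ref{lemma:AB-equiv}) and of $\cT\cC$ (allowed because they only handleslide the $\gamma$-system). But the heart of the theorem is the surjectivity statement that makes this cancellation possible, and that is exactly where your proposal has a genuine gap. The spanning computation you sketch in your second paragraph uses bounding-pair maps extending over $H_\alpha$ \emph{alone} or over $H_\beta$ \emph{alone}; those are not admissible corrections, since a map extending only over $H_\alpha$ will in general destroy $\betas_Q$, and vice versa. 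The statement you actually need is the paper's Proposition \ref{prop:Johnson-surjective}: $\tau(\cT\cA\cB)+\tau(\cT\cC)=\Lambda^3 H_1(\Sigma)$, i.e.\ spanning by classes of elements extending over $H_\alpha$ \emph{and} $H_\beta$ simultaneously, together with elements extending over $H_\gamma$. The paper proves this by an explicit construction of $3$-chains adapted to the standard pseudotrisection $\cT_Q$ (using which curves bound in $H_\alpha$, $H_\beta$, $H_\gamma$, Lemma \ref{lemma:3-chain-twist}, and unimodularity of $Q$ to hit the $y$-classes via $Q[y_i]$); nothing in your proposal substitutes for this computation.

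You do recognize the difficulty in your third paragraph, but the escape route offered there is not an argument. Applying Theorem \ref{thrm:Morita} to the splitting $(\Sigma,\betas_Q,\phi(\gammas_Q))$ only produces a factor $\sigma$ extending over $H_\beta$, and such a $\sigma$ cannot be ``cleared using the $\cA\cB_\phi$-freedom'' unless it also extends over $H_\alpha$; no mechanism is given for why the second splitting forces this, and Morita's equivalence moreover involves handlebody diffeomorphisms and reparametrizations that disturb the standard position of $\alphas_Q$ and $\betas_Q$, which you would have to control. (Also, as a side point, your claim that a $3$-chain disjoint from $\betas_Q$ yields $[a]\wedge[b]\wedge[c]=0$ ``because $L_\alpha\cap L_\beta=0$'' is not the right reason: $\wedge^3 L_\beta\neq 0$ in general; the actual obstruction is that consecutive curves of a $3$-chain have algebraic intersection $\pm1$, which is impossible for classes in the Lagrangian $L_\beta$, so no such $3$-chain exists.) To complete the proof along your lines you must prove the $\cA\cB$/$\cC$ surjectivity of $\tau$ for the standard pseudotrisection, which is precisely the paper's main technical step.
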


Fix an intersection form $Q$ and let $\cT_Q$ be the standard $(g;k,0,0)$-pseudotrisection with intersection form $Q$.  For concreteness, let $\phi = \{\phi_1,\phi_2,\phi_3\}$ be a triple of maps $\phi_{\lambda}: \del H_g \rightarrow \Sigma_g$ encoding this pseudotrisection.  We then obtain subgroups $\cA_{\phi}, \cB_{\phi}, \cC_{\phi} \subset \cM(\Sigma_g)$ of homeomorphisms that extend across the $\alpha-\beta-$ and $\gamma$-handlebodies, respectively.  From now own, we supress the subscript $\phi$, although it is important that these groups depend on $\phi$.

Let $\cT \cA$ be the intersection of $\cA$ with the Torelli group $\cI_g$.  Define $\cT \cB, \cT \cC$ and $\cT \cA \cB$ similarly, where $\cA \cB$ denotes the intersection of $\cA$ and $\cB$.  Finally, let $\tau: \cI_{g,1} \rightarrow \Lambda^3 H_1(\Sigma_g)$ denote the Johnson homomorphism.  

\begin{proposition}
\label{prop:Johnson-surjective}
Let $Q$ be a unimodular, symmetric bilinear form and let $(\Sigma,\alphas_Q,\betas_Q,\gammas_Q)$ be the standard $(g;k,0,0)$-pseudotrisection for $Q$.  Then
\[ \tau(\cT \cA \cB) + \tau(\cT \cC) = \Lambda^3 H_1(\Sigma) \]
\end{proposition}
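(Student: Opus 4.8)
The plan is to compute the images $\tau(\cT\cA\cB)$ and $\tau(\cT\cC)$ separately and show that their sum is all of $\Lambda^3 H_1(\Sigma)$. The starting point is the explicit description of the standard pseudotrisection $\cT_Q$ from Proposition~\ref{prop:pseudo-Q}: the pair $(\Sigma,\alphas_Q,\betas_Q)$ is standard, so there is a geometric symplectic basis $\{x_i,y_i\}$ for $H_1(\Sigma)$ with $[\alpha_i]=x_i$ and $[\beta_i]=y_i$ (after reordering the roles so that $\alpha_i,\beta_i$ span a summand), and $[\gamma_i]=-[\alpha_i]-\sum_j\widetilde Q_{i,j}[\beta_j]$. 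The first step is to produce, for each basic wedge $x_a\wedge x_b\wedge x_c$, a bounding-pair map lying in $\cT\cC$ with that Johnson image. Since $\beta,\gamma$ is standard, the $\gamma$-handlebody is a handlebody in which the curves $\gamma_i$ bound disks; inside it one finds $3$-chains $(a,b,c)$ realizing any triple of homology classes in $L_\gamma$, and by Lemma~\ref{lemma:3-chain-twist} the corresponding bounding-pair map extends across the $\gamma$-handlebody whenever one of the three curves bounds there, hence lies in $\cC$ (and in the Torelli group, so in $\cT\cC$); by Lemma~\ref{lemma:Johnson-3chain} its Johnson image is the corresponding wedge. So $\tau(\cT\cC)\supseteq\Lambda^3 L_\gamma$, and symmetrically one should get the analogous statement for $\cT\cA$ and $\cT\cB$ using the $\alpha$- and $\beta$-handlebodies.

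The second step is to handle $\cT\cA\cB=\cA\cap\cB\cap\cI_g$: these are Torelli maps extending across \emph{both} the $\alpha$- and the $\beta$-handlebody. The relevant subspace here is $L_\alpha\cap L_\beta$ — but since $(\Sigma,\alphas_Q,\betas_Q)$ is standard, $L_\alpha$ and $L_\beta$ are complementary Lagrangians, so $L_\alpha\cap L_\beta=0$ and one cannot build a nontrivial $3$-chain whose curves all bound in both handlebodies. Instead I would use \emph{annular} bounding pairs via Lemma~\ref{lemma:annular-twist}: if $d,d'$ cobound an annulus in the $\alpha$-handlebody and also cobound an annulus in the $\beta$-handlebody, the bounding-pair map extends across both. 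Concretely, take a simple closed curve $e$ that bounds a disk in the $\alpha$-handlebody and simultaneously is isotopic to a curve bounding a disk in the $\beta$-handlebody (i.e.\ $[e]\in L_\alpha\cap L_\beta$ is too weak, so instead one uses curves $e$ that are boundary-parallel in appropriate subsurfaces); band-summing a parallel copy $d$ of a third curve along $e$ produces $d'$ with $d,d'$ cobounding annuli in both handlebodies. Tracking Johnson images with Lemma~\ref{lemma:Johnson-3chain}, one gets $\tau(\cT\cA\cB)\supseteq x_a\wedge x_b\wedge y_c$ and, allowing $e$ to range, the full subspace $(\Lambda^2 L_\alpha)\wedge L_\beta + L_\alpha\wedge(\Lambda^2 L_\beta)$ or a comparable explicit subspace.

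The final step is the linear-algebra bookkeeping: show that $\Lambda^3 L_\gamma$ together with the subspace obtained in step two spans $\Lambda^3 H_1(\Sigma)$. Writing $H_1(\Sigma)=L_\alpha\oplus L_\beta$ gives the decomposition $\Lambda^3 H = \Lambda^3 L_\alpha \oplus (\Lambda^2 L_\alpha\otimes L_\beta)\oplus(L_\alpha\otimes\Lambda^2 L_\beta)\oplus\Lambda^3 L_\beta$; the middle two pieces come from $\cT\cA\cB$, and $\Lambda^3 L_\gamma$ must be used to reach $\Lambda^3 L_\alpha$ and $\Lambda^3 L_\beta$. Here the formula $[\gamma_i]=-[\alpha_i]-\sum_j\widetilde Q_{i,j}[\beta_j]$ is essential: expanding $[\gamma_a]\wedge[\gamma_b]\wedge[\gamma_c]$ in the $x,y$ basis produces $x_a\wedge x_b\wedge x_c$ (the $\Lambda^3 L_\alpha$ piece) plus correction terms that already lie in the middle two summands, hence in $\tau(\cT\cA\cB)$; subtracting, $\Lambda^3 L_\alpha\subseteq\tau(\cT\cA\cB)+\tau(\cT\cC)$. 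A symmetric argument (or a direct computation, since $\widetilde Q$ is unimodular up to the $\langle 0\rangle^k$ block) recovers $\Lambda^3 L_\beta$; the $\langle 0\rangle^k$ summands coming from stabilization contribute extra symplectic pairs and are handled by the standard-$(1;1,0,0)$-trisection piece, where $\alpha=\beta$ makes the $\alpha$- and $\beta$-handlebodies coincide so that every relevant bounding pair trivially extends across both. I expect the main obstacle to be step two — pinning down exactly which bounding-pair maps extend across both the $\alpha$- and $\beta$-handlebody and verifying that their Johnson images really do fill the two mixed summands — since $L_\alpha\cap L_\beta=0$ rules out the naive $3$-chain construction and forces the more delicate annular argument.
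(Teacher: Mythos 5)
Your overall strategy (explicit bounding-pair generators via Lemmas \ref{lemma:Johnson-3chain} and \ref{lemma:3-chain-twist}, the relation $[\gamma_i]=-[\alpha_i]-\sum_j\widetilde{Q}_{i,j}[\beta_j]$, unimodularity of $Q$) is the same as the paper's, but the final bookkeeping step has a genuine gap. Expanding $[\gamma_a]\wedge[\gamma_b]\wedge[\gamma_c]$ in the basis $\{x_i,y_i\}$, the correction terms do \emph{not} all lie in the two mixed summands: there is also the term $Q[y_a]\wedge Q[y_b]\wedge Q[y_c]\in\Lambda^3 L_{\beta}$. So from $\Lambda^3 L_{\gamma}$ together with the mixed summands you only reach, inside $\Lambda^3 L_{\alpha}\oplus\Lambda^3 L_{\beta}$, the ``diagonal'' subspace spanned by the sums $x_a\wedge x_b\wedge x_c + Q y_a\wedge Q y_b\wedge Q y_c$; the pure summands are not obtained, and there is no symmetric argument to fall back on, because a Torelli map extending over $H_{\beta}$ (resp.\ $H_{\alpha}$) has Johnson image contained in $L_{\beta}\wedge\Lambda^2 H_1(\Sigma)$ (resp.\ $L_{\alpha}\wedge\Lambda^2 H_1(\Sigma)$), so $\tau(\cT\cA\cB)$ can contain neither pure $x$-wedges nor pure $y$-wedges. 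The paper closes exactly this hole with \emph{additional} elements of $\tau(\cT\cC)$ coming from 3-chains in which only one curve bounds in $H_{\gamma}$: using that the $\beta\gamma$ pair is geometrically standard, one gets images such as $[y_i]\wedge([x_i]+Q[y_i])\wedge([y_i]+[y_j])\equiv[y_i]\wedge Q[y_i]\wedge[y_j]$ modulo mixed terms, and then unimodularity of $Q$ converts the classes $Q[y_i]$ into arbitrary $[y_m]$, spanning all $y$-only wedges; only after that do the elements $[z_i]\wedge[z_j]\wedge[z_k]$ deliver the pure $x$-wedges. You mention unimodularity in passing but never construct these elements, and your accounting asserts the opposite (that the corrections are purely mixed), so as written the images do not span $\Lambda^3 H_1(\Sigma)$.

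Two further points. First, a 3-chain cannot have all three curves with classes in $L_{\gamma}$: consecutive curves of a chain intersect once, while $L_{\gamma}$ is Lagrangian, so ``3-chains realizing any triple of homology classes in $L_{\gamma}$'' is impossible as stated. The conclusion $\Lambda^3 L_{\gamma}\subseteq\tau(\cT\cC)$ is still true, but you obtain it as Morita does, by taking differences of chains whose outer curves bound in $H_{\gamma}$ while varying the middle curve. Second, for the standard $(g;k,0,0)$-pseudotrisection with $k>0$ the $\alpha\beta$ splitting is of $\#_k S^1\times S^2$, so $L_{\alpha}\cap L_{\beta}$ has rank $k$ rather than $0$; the paper exploits precisely these shared curves when spanning the mixed summands. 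For $\cT\cA\cB$ the cleaner mechanism (and the one the paper uses) is Lemma \ref{lemma:3-chain-twist} applied twice -- a 3-chain with one curve bounding in $H_{\alpha}$ and another bounding in $H_{\beta}$ gives a bounding pair extending over both handlebodies -- whereas your annulus/band-sum sketch is too loosely specified to verify that it fills the mixed summands.
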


\pagebreak

\begin{proof}
We just need to find explicit bounding pairs that map onto a basis for $\Lambda^3 H_1(\Sigma_g)$.   

To do this, we decompose $\Lambda^3 H_1(S_g)$ into subspaces and then check that each subspace is in $\tau(\cT \cA \cB) + \tau(\cT \cC)$.  Let $\{x_i,y_i\}$ for $1 \leq i \leq g - k$ be the {\it left} generators and $\{x_l,y_l\}$ for $g - k + 1 \leq l \leq g$ be the {\it right} generators.

Let $W^1_X$ be the subspace spanned by elements of the form $x_i \wedge a \wedge b$, where $x_i,a,b$ are left generators; let $W^1_Y$ be the subspace spanned by elements of the form $y_i \wedge a \wedge b$, where $y_i,a,b$ are left generators; let $W^1_{XY} = W^1_X \cap W^1_Y$ be their intersection and let $W^1 = W^1_X + W^1_Y$ their sum.  Define $W^2_X,W^2_Y,W^2_{XY},W^2$ similarly using only right generators.  Finally, define $W_X, W_Y, W_{XY},W$ in the same manner but allowing elements of any combination between left and right.

By assumption, we know that $\{x_1,\dots,x_g\}$ bound in $H_{\alpha}$, that $\{y_1,\dots,y_{g-k},x_{g-k+1},\dots,x_g\}$ bound in $H_{\beta}$ and some curves $\{z_1,\dots,z_g\}$ bound in $H_{\gamma}$, where
\[ [z_i] =
\begin{cases}
[x_i] + Q[y_i] & \text{for } 1 \leq i \leq g- k \\
[y_i] & \text{for } g-k+1 \leq i \leq g
\end{cases} \]

First, it follows from Morita's original argument that $W^2$ is in the image $\tau(\cT\cA \cB) + \tau(\cT\cC)$.  Specifically, the following elements lie in $\tau(\cT \cA \cB) \cap \tau (\cT \cC)$:
\begin{align*}
[x_i] \wedge [y_i] \wedge ([x_i] + [x_j]) && [y_i] \wedge [x_i] \wedge ([y_i] + [y_j]) \\
 [x_i] \wedge ([y_i] + [x_j]) \wedge ([y_j] + [y_k]) && [y_i] \wedge ([x_i] + [y_j]) \wedge ([x_j] + [x_k]) \\
\end{align*}
In addition, the following elements
\[ [x_i] \wedge ([y_i] + [x_j]) \wedge ([y_j] + [x_k])  \] 
lie in $\tau( \cT \cA \cB)$ and the elements
\[ [y_i] \wedge ([x_i] + [y_j]) \wedge ([x_j] + [y_k]) \]
 lie in $\tau(\cT \cC)$.  These elements suffice to span $W^2$.

Next, let $1 \leq i,j,l \leq g-k$.  The following elements are each the image of a 3-chain, at least one of which bounds in $H_{\alpha}$ and at least one of which bounds in $H_{\beta}$.  Thus, by Lemma \ref{lemma:3-chain-twist}, they lie in the image $\tau(\cA\cB)$:
\begin{align*}
[x_i] \wedge [y_i] \wedge ([x_i] + [x_j]) && [y_i] \wedge [x_i] \wedge ([y_i] + [y_j]) \\
([x_i] + [x_l]) \wedge [y_i] \wedge ([x_i] + [x_j]) &&  ([y_i]+ [y_l]) \wedge [x_i] \wedge ([y_i] + [y_j])
\end{align*}
Thus $W^1_{XY} \subset \tau(\cT\cA\cB)$.

Next, since $\{y_i,z_i\}$ are geometrically standard, we can find 3-chains that live in $\cC$ mapping to
\begin{align*}
[y_i] \wedge ([x_i] + Q[y_i]) \wedge ([y_i] + [y_l]) &= [y_i] \wedge Q[y_i] \wedge [y_j] && \text{ mod } W_{XY} \\
([y_i] + [y_l]) \wedge ([x_i] + Q[y_i]) \wedge ([y_i] + [y_l]) &= ([y_i] + [y_l]) \wedge Q[y_i] \wedge ([y_i] + [y_l]) && \text{ mod } W_{XY}
\end{align*}
Since $Q$ is unimodular, we can express any $[y_m]$ as a $\ZZ$-linear combination of the $\{Q[y_i]\}$ and therefore $W^1_Y \subset \tau(\cA\cB) + \tau(\cC)$.  Finally, it again follows from Morita's original proof that
\[ [z_i] \wedge [z_j] \wedge [z_i] = [x_i] \wedge [x_j] \wedge [x_k] \text{ mod }W_Y\]
is in $\tau(\cC)$.  Consequently, $W^2 \subset \tau(\cT\cA\cB) + \tau(\cT\cC)$.

Finally, we need to ensure all mixed elements lie in the image.  From now on, we will let $1 \leq i,j \leq g - k$ and $g-k+1 \leq l,m \leq g$.

By a similar argument as above, we can obtain the elements
\begin{align*}
[x_i] \wedge [y_i] \wedge [x_l] && ([x_i] + [x_j]) \wedge [y_i] \wedge [x_l] && [x_i] \wedge ([y_i] + [y_j]) \wedge [x_l] \\
[x_i] \wedge [y_i] \wedge [y_l] && ([x_i] + [x_j]) \wedge [y_i] \wedge [y_l] && [x_i] \wedge ([y_i] + [y_j]) \wedge [y_l] \\
[x_l] \wedge [y_l] \wedge [x_i]  && ([x_l] + [x_m]) \wedge [y_l] \wedge [x_i] && [x_l] \wedge ([y_l] + [y_m]) \wedge [x_i] \\
[x_l] \wedge [y_l] \wedge [y_i] && ([x_l] + [x_m]) \wedge [y_l] \wedge [y_i] && [x_l] \wedge ([y_l] + [y_m]) \wedge [y_i]
\end{align*}
in the image $\tau(\cT\cA\cB)$.  This is enough to completely span $W_{XY}$.

Moreover, the elements
\begin{align*}
[x_i] \wedge [y_i + x_l] \wedge [x_j] &= [x_i] \wedge [x_l] \wedge [x_j] && \text{mod } W_{XY}\\
[x_i] \wedge [y_i + x_l] \wedge [x_m] &= [x_i] \wedge [x_l] \wedge [x_m] && \text{mod } W_{XY}
\end{align*}
also lie in $\tau(\cT\cA \cB)$ and so we can span $W_X$.  And finally, the elements
\begin{align*}
[y_i] \wedge ([z_i] + [z_l]) \wedge [y_j] &= [y_i] \wedge ([y_i] + [y_l]) \wedge [y_j] && \text{mod } W_{X} \\
[y_i] \wedge ([z_i] + [z_l]) \wedge [y_m] &= [y_i] \wedge ([y_i] + [y_l]) \wedge [y_m] && \text{mod } W_{X}
\end{align*}
are in the image $\tau(\cT\cC)$.  We have thus constructed a basis for $\Lambda^3 H_1$ that lies in $\tau(\cT\cA\cB) + \tau(\cT\cC)$.

\end{proof}

\begin{theorem}
Let $X$ be a closed, oriented, smooth 4-manifold $X$ that admits a $(g;k,0,0)$ trisection.  Then $X$ admits a trisection diagram
\[\cT_{Q,\phi} = (\Sigma,\alphas_Q,\betas_Q,\phi(\gammas_Q))\]
where $Q$ is the intersection form of $X$, $\cT_Q = (\Sigma,\alphas_Q,\betas_Q,\gammas_Q)$ is the standard pseudotrisection for $Q$ constructed in Proposition \ref{prop:pseudo-Q}, and $\phi \in \cK_g$.
\end{theorem}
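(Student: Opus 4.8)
The plan is to reduce $\cT_X$ to the normal form provided by Proposition \ref{prop:pseudo-tri-standardize} and then to adjust the resulting gluing map so that it lies in the Johnson kernel, using the surjectivity statement of Proposition \ref{prop:Johnson-surjective}. An honest $(g;k,0,0)$-trisection is in particular a $(g;k,0,0)$-pseudotrisection, and its intersection form in the sense of Equation \ref{eq:q-formula} agrees with the topological intersection form $Q=Q_X$. So Proposition \ref{prop:pseudo-tri-standardize}(1) supplies a diagram for $\cT_X$ satisfying the conclusions of that part, and then Proposition \ref{prop:pseudo-tri-standardize}(2) produces an element $\psi\in\cI_g$ for which $\cT_{Q,\psi}=(\Sigma,\alphas_Q,\betas_Q,\psi(\gammas_Q))$ is a diagram for $\cT_X$. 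It then suffices to modify $\psi$ within $\cI_g$, without changing the equivalence class of the diagram, until it lies in $\cK_g$.

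The second step is to record which modifications of $\psi$ are available. Handleslides of the $\gamma$-cut system inside the $\gamma$-handlebody are handlebody diffeomorphisms, and for $c\in\cT\cC$ they carry $\psi(\gammas_Q)$ to the handleslide class of $\psi c(\gammas_Q)$; hence $\cT_{Q,\psi}\sim\cT_{Q,\psi c}$. For $r\in\cT\cA\cB$, Lemma \ref{lemma:AB-equiv}---whose hypothesis involves only the $\alpha$- and $\beta$-maps and so is unaffected by the choice of $\gamma$-map---lets us act by $r$ on the $\gamma$-data while restoring $\alphas_Q,\betas_Q$ up to handleslides, so $\cT_{Q,\psi c}\sim\cT_{Q,\,r\psi c}$. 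Chaining the two, $\cT_{Q,\psi}\sim\cT_{Q,\,r\psi c}$ for every $r\in\cT\cA\cB$ and $c\in\cT\cC$, and $r\psi c\in\cI_g$ since each factor is Torelli.

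Finally, let $\tau$ denote the Johnson homomorphism, so that $\cK_g=\ker\tau$ and $\tau$ is additive. Proposition \ref{prop:Johnson-surjective} gives $\tau(\cT\cA\cB)+\tau(\cT\cC)=\Lambda^3 H_1(\Sigma)$, and since both summands are subgroups I may choose $r\in\cT\cA\cB$ and $c\in\cT\cC$ with $\tau(r)+\tau(c)=-\tau(\psi)$. Then $\psi':=r\psi c$ satisfies $\tau(\psi')=0$, that is $\psi'\in\cK_g$, while by the previous step $\cT_{Q,\psi'}=(\Sigma,\alphas_Q,\betas_Q,\psi'(\gammas_Q))$ is still a diagram for $\cT_X$. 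Taking $\phi=\psi'$ proves the theorem, and with it Theorem \ref{thrm:torelli-trisection}.

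I expect the main obstacle to lie not in this assembly but in its input, Proposition \ref{prop:Johnson-surjective}: one must produce bounding pairs, each supported in a single one of the $\alpha$-, $\beta$-, and $\gamma$-handlebodies, whose Johnson images together span $\Lambda^3 H_1(\Sigma)$. The delicate cases are the generators that mix the $\#_k S^1\times S^2$ directions with the intersection-form directions, and the use of unimodularity of $Q$ needed to reach the $W^1_Y$ summand in that argument. A secondary technical point, which should be kept consistent throughout, is the precise relationship between the Johnson homomorphism and its kernel for $\cI_g$ versus $\cI_{g,1}$.
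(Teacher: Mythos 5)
Your proposal is correct and takes essentially the same approach as the paper: standardize to a diagram $(\Sigma,\alphas_Q,\betas_Q,\psi(\gammas_Q))$ with $\psi\in\cI_g$, then use Proposition \ref{prop:Johnson-surjective}, Lemma \ref{lemma:AB-equiv}, and handleslides over the $\gamma$-handlebody to correct $\psi$ to an element of $\cK_g$. The only difference is cosmetic: you write the correction as $r\psi c$ and spell out why each factor preserves the equivalence class of the diagram, whereas the paper writes $\phi = c^{-1}\psi a^{-1}$ and simply cites Lemma \ref{lemma:AB-equiv}.
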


\begin{proof}
Just as in the proof of Corollary \ref{cor:trisection-torelli-equiv}, we can apply Proposition \ref{prop:cut-system-torelli} and find a trisection diagram $(\Sigma,\alphas_Q,\betas_Q,\psi(\gammas_Q))$ for $X$, where $\psi$ is an element of the Torelli group $\cI_g$.  By Proposition \ref{prop:Johnson-surjective}, we can find some elements $a \in \cT \cA \cB$ and $c \in \cT \cC$ such that 
\[\tau(\psi) = \tau(a) + \tau(c)\]
Consequently, $\tau(c^{-1} \psi a^{-1}) = -\tau(c) + \tau(\phi) - \tau(a) = 0$.  Thus, $\phi = c^{-1} \psi a^{-1} \in \text{ker}(\tau) = \cK_g$.  It now follows from Lemma \ref{lemma:AB-equiv} that the trisection diagrams
\[(\Sigma,\alphas_Q,\betas_Q,\phi(\gammas_Q)) \qquad \text{and} \qquad (\Sigma,\alphas_Q,\betas_Q,\psi(\gammas_Q))\]
are equivalent.
\end{proof}

\section{Casson invariant and linking forms}

\subsection{Casson invariant}

If $K$ is a knot in a homology 3-sphere $Y$, let $Y + \frac{1}{m} K$ denote the integral homology 3-sphere obtained by Dehn surgery on $K$ with slope $\frac{1}{m}$. 

Let $\lambda(Y) \in \ZZ$ denote the Casson invariant of an integral homology 3-sphere $Y$.  It has the following properties:

\begin{enumerate}
\item $\lambda(S^3) = 0$.
\item For any $Y$, any knot $K$ in $Y$ and any integer $m \in \ZZ$, the difference
\[\lambda'(K) \coloneqq \lambda \left(Y + \frac{1}{m+1} K \right) - \lambda \left(Y + \frac{1}{m} K \right)\]
is independent of $m$.
\item If $K \cup L$ is a boundary link in $Y$, then
\begin{align*}
\lambda''(K,L) &\coloneqq \lambda\left(Y + \frac{1}{m+1}K + \frac{1}{n+1}L\right) - \lambda\left(Y + \frac{1}{m}K + \frac{1}{n+1}L\right)\\
& - \lambda\left(Y + \frac{1}{m+1}K + \frac{1}{n}L\right) + \lambda\left(Y + \frac{1}{m}K + \frac{1}{n}L\right) \\
&=0.
\end{align*}
\item $\lambda'(T(2,3)) = 1$.
\item $\lambda'(K) = \frac{1}{2} \Delta''_K(1)$.
\item $\lambda(-Y) = - \lambda(Y)$.
\item $\lambda(Y_1 \# Y_2) = \lambda(Y_1) + \lambda(Y_2)$.
\end{enumerate}
Moreover, we have that
\[\frac{1}{2} \Delta''_{K}(1) = \text{Arf}(K) \text{ mod }2.\]

\subsection{Linking form}

The Arf invariant and Casson invariant of knots can be computed in terms of a linking form on the homology of a Seifert surface.

Let $Y$ be an oriented 3-manifold and $\Sigma$ an oriented, embedded surface in $Y$.  The linking form is a map
\[l : H_1(\Sigma) \times H_1(\Sigma) \rightarrow \ZZ\]
defined in terms of the pairwise linking of curves on $\Sigma$.  Specifically, let $a$ and $b$ be simple closed curves in $\Sigma$.  Then
\[l([a],[b]) = lk_Y(a,b^+)\]
where $b^+$ denotes a pushoff of $b$ in the positive normal direction to $\Sigma$.  The linking pairing is well-defined on homology, bilinear, and satisfies the symmetry relation
\begin{equation}
\label{eq:linking-symmetry}
l([b],[a]) = l([a],[b]) + \langle [a],[b] \rangle_{\Sigma}.
\end{equation}
Reducing mod 2, we obtain a map $q: H_1(\Sigma; \ZZ/2\ZZ) \rightarrow \ZZ/2 \ZZ$ defined by setting
\[q(a) = l(a,a)\]
It is a {\it quadratic enhancement} of the intersection pairing $\langle , \rangle_{\Sigma}$, meaning that it satisfies the relation
\begin{equation}
\label{eq:quad-enhancement}
q(x + y) = q(x) + q(y) + \langle x,y \rangle_{\Sigma} \text{ mod }2
\end{equation}

Now, suppose that $\Sigma$ is a Seifert surface for a knot $K \subset Y$, where $Y$ is an integral homology 3-sphere.  Let $\{a_i,b_i\}$ be a geometric symplectic basis for $\Sigma$.  The Casson invariant $\lambda'(K)$ of $K$ can be computed in terms of the linking form on $\Sigma$ according to the formula
\begin{equation}
\lambda'(K) = \sum_{i = 1}^{g} \left( l(a_i,a_i) l(b_i,b_i) - l(a_i,b_i)l(a_i,b_i) \right) + 2 \sum_{1 \leq i < j \leq g} \left( l(a_i,a_j)l(b_i,b_j) - l(a_i,b_j)l(a_j,b_i) \right)
\end{equation}
The Arf invariant can be computing using the simpler formula
\begin{equation}
\label{eq:Arf}
\text{Arf}(K) = \sum_{i = 1}^g q(a_i)\cdot q(b_i) \text{ mod } 2.
\end{equation}

\subsection{Linking forms on the central surface}

Let $\Sigma$ be the central surface of a pseudotrisection.  This surface embeds in each 3-manifold $Y_i$ as a Heegaard surface.  Each embedding determines a linking form $l_i$ on $H_1(\Sigma)$ and a quadratic enhancements $q_i$.  In general, these linking forms are distinct.  However, with respect to a particular basis, we can describe $l_2$ and $l_3$ in terms of the quadratic form of the pseudotrisection.  In addition, let $c$ be a separating simple closed curve in $\Sigma$.  Then, via the embedding of $\Sigma$ in $Y_i$, it determines a knot $K_i$.  In general, the Arf invariant, Alexander polynomial and Casson invariant of $K_i$ depends on $i$.  However, these also can be described in terms of the quadratic form of the pseudotrisection.

Throughout this subsection, let $(\Sigma,\alphas,\betas,\gammas)$ be a pseudotrisection with quadratic form $Q$.  Let $Y_2 = H_{\beta} \cup -H_{\gamma}$ and $Y_3 = H_{\gamma} \cup - H_{\alpha}$.

First, we describe the difference between the quadratic enhancements $q_2$ and $q_3$.

\begin{lemma}
\label{lemma:Arf-quadratic}
Suppose that $Q$ is even.  Then $q_2 = q_3$.
\end{lemma}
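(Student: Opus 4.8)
The goal is to compare the two quadratic enhancements $q_2, q_3 \colon H_1(\Sigma;\ZZ/2) \to \ZZ/2$ associated to the embeddings of the central surface $\Sigma$ into $Y_2 = H_\beta \cup (-H_\gamma)$ and $Y_3 = H_\gamma \cup (-H_\alpha)$, under the hypothesis that the intersection form $Q$ of the pseudotrisection is even.  My strategy is to compute $q_2$ and $q_3$ explicitly on the standard symplectic basis $\{[\alpha_i],[\beta_j]\}$ for $H_1(\Sigma)$ provided by Proposition~\ref{prop:pseudo-tri-standardize}(1), show the values agree, and then conclude they agree everywhere by the quadratic-enhancement relation \eqref{eq:quad-enhancement}.

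First I would set up the basis and recall the key principle: if a simple closed curve $c$ bounds a disk in a handlebody $H$, then for any curve $a$ on $\del H$, the linking number $\mathrm{lk}(c, a^+)$ computed via that handlebody is zero — pushing off $c$ into the handlebody and capping with the bounding disk gives a nullhomologous surface.  More precisely, if $c$ bounds in $H$ then $l(c,c) = 0$ and $l(c,b) = 0$ in the linking form coming from any Heegaard splitting with $H$ on one side.  Since $(\Sigma,\alphas,\betas)$ and $(\Sigma,\betas,\gammas)$ are (after standardization) genus-$g$ Heegaard splittings of integral homology spheres, and since $\{[\alpha_i]\}$ spans $L_\alpha$, $\{[\beta_i]\}$ spans $L_\beta$, and by Proposition~\ref{prop:pseudo-tri-standardize}(1) $[\gamma_i] = -[\alpha_i] - \sum_j \widetilde{Q}_{ij}[\beta_j]$, I can use these bounding relations in $H_\alpha$, $H_\beta$, $H_\gamma$ to pin down $l_2$ and $l_3$.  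For $q_3$, built from $Y_3 = H_\gamma \cup(-H_\alpha)$: the $\alpha$-curves bound in $H_\alpha$, so $q_3([\alpha_i]) = 0$; and $q_3$ on a $\beta$-class is controlled by the $\gamma$-side, which after using $[\gamma_i] = -[\alpha_i]-\sum_j\widetilde Q_{ij}[\beta_j]$ and solving for $[\beta_i]$ (legitimate since $Q$ is unimodular, hence invertible mod $2$) expresses $q_3([\beta_i])$ in terms of diagonal entries of $\widetilde Q^{-1}$.  For $q_2$, built from $Y_2 = H_\beta\cup(-H_\gamma)$: the $\beta$-curves bound in $H_\beta$, so $q_2([\beta_i]) = 0$, and $q_2([\alpha_i])$ is read off the $\gamma$-side, giving an expression in the diagonal entries of $\widetilde Q$ itself.

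The main point is then the arithmetic observation that all the relevant diagonal quantities vanish mod $2$ precisely because $Q$ is even.  The diagonal entries $\widetilde Q_{ii} = Q_{ii}$ are even by hypothesis, so the $\gamma$-side contributions to $q_2([\alpha_i])$ vanish mod $2$; and since $Q$ even implies $Q \equiv 0$ on the diagonal mod $2$ while being unimodular, the inverse $\widetilde Q^{-1}$ over the $k=0$ block is also even on the diagonal (a unimodular even integral form has even-diagonal inverse — this follows because $Q$ even $\iff$ $Q$ is the Gram matrix of an even lattice, and the dual lattice of an even unimodular lattice is again even).  Hence $q_3([\beta_i]) = 0$ as well.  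Therefore $q_2$ and $q_3$ agree on the symplectic basis $\{[\alpha_i],[\beta_j]\}$ — both send every basis vector to $0$ — and since any quadratic enhancement of a fixed bilinear form is determined by its values on a basis via \eqref{eq:quad-enhancement}, we conclude $q_2 = q_3$.

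\textbf{Expected obstacle.}  The technical heart is the bookkeeping for the linking forms: carefully justifying that $q_i$ of a class bounding on one side of the relevant Heegaard splitting vanishes, and correctly tracking the contribution of the $\gamma$-curves through the change of basis $[\gamma_i] = -[\alpha_i]-\sum_j\widetilde Q_{ij}[\beta_j]$.  A subtle point is that $l_i$ is not symmetric (it satisfies \eqref{eq:linking-symmetry}), so one must be attentive to the order of arguments; but this washes out for the diagonal quadratic refinement $q_i(x) = l_i(x,x)$, which is what is needed.  The claim that the inverse of an even unimodular matrix has even diagonal mod $2$ deserves a one-line justification (e.g.\ via the even-unimodular-lattice/dual-lattice argument, or by a direct mod-$2$ computation using that an even symmetric matrix over $\ZZ/2$ is alternating, hence its inverse is alternating), and I would include that as a short lemma or inline remark.
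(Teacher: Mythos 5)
Your proof is correct, and its skeleton is the same as the paper's: evaluate both enhancements on a basis of $H_1(\Sigma;\ZZ/2\ZZ)$, use the fact that $q_i$ vanishes on any class represented by the boundary of a compressing disk of a handlebody on one side of the relevant Heegaard splitting, transfer values through the relation $[\gamma_i]=-[\alpha_i]-\sum_j\widetilde{Q}_{i,j}[\beta_j]$ with evenness of $Q$ killing the diagonal contributions, and conclude via the enhancement relation \eqref{eq:quad-enhancement}. The only real difference is the choice of basis. You evaluate on $\{[\alpha_i],[\beta_i]\}$, which forces you to compute $q_3([\beta_i])$; this comes out to $(Q^{-1})_{i,i}$ mod $2$ (consistent with the paper's later formula $l_3(y_i,y_j)=Q^{-1}_{i,j}$), so you need the auxiliary fact that an even unimodular form has an inverse with even diagonal --- true, and either of your justifications (dual lattice of an even unimodular lattice is even, or the mod-$2$ alternating-matrix argument) works, but it is an extra lemma. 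The paper instead evaluates on the basis $\{[x_i],[z_i]\}$ of $\alpha$- and $\gamma$-classes, on which $q_3$ vanishes for free (each class bounds in $H_\alpha$ or $H_\gamma$) and $q_2$ vanishes on $[z_i]$ for free, so the single computation needed is $q_2([x_i])=Q_{i,i}\equiv 0$, and no inverse matrix ever appears. Your route costs a little more bookkeeping but is no less valid; the one point to state explicitly is the treatment of the $k$ stabilized indices, where $\widetilde{Q}$ is singular and cannot be inverted: there the stabilized $\alpha$-, $\beta$-, $\gamma$-curves are parallel or dual in pairs, so $q_2$ and $q_3$ vanish on those basis vectors directly by the bounding-disk argument, which is what your remark about restricting to the nondegenerate block should be expanded to say.
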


\begin{proof}
By Equation \ref{eq:quad-enhancement}, it is enought to check on the symplectic basis $\{[z_i],[x_i]\}$.  Furthermore, we assume that we have performed handeslides so that the diagram satisfies the conclusions of Proposition \ref{prop:pseudo-tri-standardize}.

First, the mod 2 linking form $q_3$ in $Y_{3}$ vanishes on each element of the basis $\{[x_i],[z_i]\}$ since we can choose representatives that bound disks in one of the two handlebodies $H_{\gamma}$ and $H_{\alpha}$.  By the same argument, the mod 2 linking form $q_2$ in $Y_{2}$ vanishes on the basis $\{[y_i],[z_i]\}$ since each can be represented by the boundary of a compressing disk.  To check that the mod 2 linking forms agree, we need to check that $q_2([x_i])$ always vanishes.  By Equation \ref{eq:quad-enhancement}, we have
\[q_2([x_i]) = q_2(-[z_i] + Q[y_i]) = q_2([z_i]) + q_2(Q[y_i]) + \langle [z_i], Q[y_i] \rangle_{\Sigma} = Q_{i,i} \, \, \text{ mod }2.\]
But since $Q$ is an even intersection form, every diagonal element $Q_{i,i}$ is even and vanishes mod $2$.
\end{proof}

As a corollary, we see that the Arf invariants of $K_2$ and $K_3$ agree when the intersection form is even.

\begin{lemma}
\label{lemma:same-Arf}
Suppose that $Q$ is even.  Let $c$ be a separating curve in $\Sigma$ and let $K_2$ and $K_3$ be the knots obtained as the image of $c$ in $Y_2$ and $Y_3$.  Then $\text{Arf}(K_2) = \text{Arf}(K_3)$.
\end{lemma}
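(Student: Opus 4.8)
The plan is to deduce the equality of Arf invariants directly from the equality of the mod-2 quadratic enhancements established in Lemma \ref{lemma:Arf-quadratic}, using the fact that $\mathrm{Arf}$ of a Seifert-framed knot on the central surface is computed purely from the restriction of the quadratic enhancement to a symplectic basis. More precisely: the separating curve $c$ cuts $\Sigma$ into two subsurfaces; let $\Sigma_c$ be the one bounded by $c$ on the side giving a Seifert surface for $K_i$, of genus $h$ say. Via the embedding $\Sigma \hookrightarrow Y_i$, this subsurface $\Sigma_c$ is a Seifert surface for $K_i$, and by Equation \ref{eq:Arf} we have $\mathrm{Arf}(K_i) = \sum_{j=1}^h q_i(a_j) q_i(b_j) \bmod 2$, where $\{a_j,b_j\}$ is a geometric symplectic basis for $H_1(\Sigma_c)$.

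First I would observe that a geometric symplectic basis $\{a_j,b_j\}$ for $H_1(\Sigma_c)$, with all curves supported in the subsurface $\Sigma_c \subset \Sigma$, maps to a partial basis of $H_1(\Sigma)$; the classes $[a_j],[b_j]$ are among the data on which $q_2$ and $q_3$ are evaluated. The key point, which I would make explicit, is that the linking form $l_i$ restricted to curves lying in the Seifert surface $\Sigma_c$ agrees with the linking form of the knot $K_i$ computed from that Seifert surface — this is exactly how the push-off $b^+$ in the normal direction to $\Sigma$ matches the push-off used in the Seifert-surface formula, so $l_i(a,b) = lk_{Y_i}(a,b^+)$ is the Seifert linking number. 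Hence $\mathrm{Arf}(K_i) = \sum_{j=1}^h q_i([a_j]) q_i([b_j]) \bmod 2$ with $q_i$ the enhancement on $H_1(\Sigma;\ZZ/2\ZZ)$ from the pseudotrisection.

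Then the conclusion is immediate: by Lemma \ref{lemma:Arf-quadratic}, since $Q$ is even we have $q_2 = q_3$ as functions on $H_1(\Sigma;\ZZ/2\ZZ)$, so $q_2([a_j]) = q_3([a_j])$ and $q_2([b_j]) = q_3([b_j])$ for every $j$, whence the two sums $\sum_j q_2([a_j]) q_2([b_j])$ and $\sum_j q_3([a_j]) q_3([b_j])$ are equal mod $2$, i.e. $\mathrm{Arf}(K_2) = \mathrm{Arf}(K_3)$.

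The main obstacle I anticipate is the bookkeeping in the second step: one must be careful that the Arf formula \ref{eq:Arf} is genuinely being applied with $q_i$ equal to the restriction of the central-surface enhancement, and that choosing the symplectic basis of $H_1(\Sigma_c)$ to consist of curves embedded in $\Sigma$ (rather than abstract homology classes) is legitimate — this uses that any symplectic basis of a subsurface can be realized geometrically, and that the linking-form computation is basis-independent in the appropriate sense. Once that compatibility is pinned down, no computation is needed beyond invoking the previous lemma.
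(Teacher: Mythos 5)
Your proposal is correct and follows essentially the same route as the paper: cut $\Sigma$ along $c$ to get a Seifert surface for each $K_i$, apply the Arf formula (Equation \ref{eq:Arf}) with a symplectic basis of the subsurface, and conclude from $q_2 = q_3$ (Lemma \ref{lemma:Arf-quadratic}). The extra care you take in matching the restricted central-surface enhancement with the Seifert-surface quadratic form is a worthwhile elaboration of a step the paper treats implicitly, but it is the same argument.
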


\begin{proof}
Since $c$ is separating on $\Sigma$, it cuts off a subsurface $\Sigma'$.  The images of this surface in $Y_2$ and $Y_3$ are Seifert surfaces for $K_2$ and $K_3$, respectively.  Choose a symplectic basis $\{a_i,b_i\}$ for $H_1(\Sigma')$.  By Equation \ref{eq:Arf} and Lemma \ref{lemma:Arf-quadratic}, 
\begin{align*}
\text{Arf}(K_2) &= \sum q_2(a_i) q_2(b_i)\\
&= \sum q_3(a_i) q_3(b_i) \\
&= \text{Arf}(K_3).
\end{align*}
\end{proof}

Now we describe the linking forms $l_2$ and $l_3$ in terms of the quadratic form of the pseudotrisection.  Recall that we have a symplectic basis $\{x_i,y_i\}$ for $H_1(\Sigma,\ZZ)$ such that $\{x_i\}$ is a basis for $L_{\alpha}$ and $\{y_i\}$ is a basis for $L_{\beta}$  Furthermore.  To simplify notation, let $Qy_i = \sum_{j = 1}^n Q_{i,j} y_j$.  Then $z_i = - x_i - Qy_i$ is a basis for $L_{\gamma}$.

\begin{proposition}
\label{prop:linking-l2}
The linking form $l_2$ satisfies
\begin{align*}
l_2(z_i,z_j) &= 0 & l_2(z_i,x_j) &= -Q_{i,j}\\
l_2(x_i,z_j) &= 0 & l_2(x_i,x_j) &= Q_{i,j}\\
l_2(x_i,y_j) &= -\delta_{i,j} & l_2(y_i,y_j) &= 0
\end{align*}
\end{proposition}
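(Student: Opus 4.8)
The plan is to reduce $l_2$ to two ``upper--triangularity'' properties of the linking form of the Heegaard surface $\Sigma\subset Y_2=H_\beta\cup(-H_\gamma)$, and then to finish with the symmetry relation~(\ref{eq:linking-symmetry}), bilinearity, and the identity $z_i=-x_i-Qy_i$, where $Qy_i=\sum_j Q_{i,j}y_j\in L_\beta$. Recall that $L_\beta$ is spanned by the classes of the compressing curves $\beta_j$ of $H_\beta$ and $L_\gamma$ by those of the compressing curves $\gamma_j$ of $H_\gamma$. After fixing orientations so that the positive normal to $\Sigma$ in $Y_2$ points out of $H_\beta$ (i.e.\ toward the $-H_\gamma$ side), I would first establish: (A) $l_2(v,w)=0$ whenever $v\in L_\beta$, for all $w$; and (B) $l_2(v,w)=0$ whenever $w\in L_\gamma$, for all $v$. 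For (A), bilinearity reduces to $v=[\beta_j]$; then $\beta_j$ bounds a disk $D_j$ properly embedded in $H_\beta$, while the pushoff $c^+$ of any curve $c\subset\Sigma$ lies on the $H_\gamma$ side, disjoint from $D_j$, so $lk_{Y_2}(\beta_j,c^+)=D_j\cdot c^+=0$. For (B), bilinearity reduces to $w=[\gamma_j]$; the pushoff $\gamma_j^+$ lies on the $H_\gamma$ side, where it is isotopic to $\gamma_j$; since $\gamma_j$ bounds a meridian disk of $H_\gamma$, the curve $\gamma_j^+$ bounds an embedded disk in the interior of $H_\gamma$, disjoint from $\Sigma$, whence $lk_{Y_2}(c,\gamma_j^+)=0$ for every $c\subset\Sigma$.

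I would then feed (A) and (B) through the symmetry relation~(\ref{eq:linking-symmetry}), obtaining two companion identities: (C) $l_2(w,v)=\langle v,w\rangle_\Sigma$ whenever $v\in L_\beta$; and (D) $l_2(v,w)=-\langle v,w\rangle_\Sigma$ whenever $v\in L_\gamma$. Every entry of the proposition is then forced. Using $\langle x_i,y_j\rangle_\Sigma=\delta_{i,j}$ and $\langle x_i,x_j\rangle_\Sigma=\langle y_i,y_j\rangle_\Sigma=0$, one records $\langle z_i,z_j\rangle_\Sigma=0$ and $\langle z_i,x_j\rangle_\Sigma=Q_{i,j}$ (the former using that $Q$ is symmetric). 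Then $l_2(z_i,z_j)=l_2(x_i,z_j)=0$ by (B); $l_2(y_i,y_j)=0$ by (A); $l_2(z_i,x_j)=-\langle z_i,x_j\rangle_\Sigma=-Q_{i,j}$ by (D); $l_2(x_i,y_j)=\langle y_j,x_i\rangle_\Sigma=-\delta_{i,j}$ by (C); and, expanding $x_i=-z_i-Qy_i$ and using (A) on the $L_\beta$ term $Qy_i$, $l_2(x_i,x_j)=-l_2(z_i,x_j)-l_2(Qy_i,x_j)=Q_{i,j}$.

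There is no genuinely hard step here: the content is the pair of vanishing statements (A)--(B), which are standard facts about linking forms of Heegaard surfaces, and the rest is bilinearity, the symmetry relation, and the definition $z_i=-x_i-Qy_i$. The one place demanding care is the orientation bookkeeping --- verifying that the positive normal to $\Sigma\subset Y_2=H_\beta\cup(-H_\gamma)$ points out of $H_\beta$ rather than out of $H_\gamma$, since the opposite convention would interchange the roles of $L_\beta$ and $L_\gamma$ and hence transpose the matrix of values. The parallel statement for $l_3$ on $Y_3=H_\gamma\cup(-H_\alpha)$ would be proved the same way, with the roles of the handlebodies permuted accordingly.
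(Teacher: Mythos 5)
Your proof is correct and follows essentially the same route as the paper's: both rest on the two vanishing statements (first slot vanishes on $L_\beta$, second slot vanishes on $L_\gamma$), then use the symmetry relation, bilinearity, and $z_i=-x_i-Qy_i$ to force all the entries. The only difference is cosmetic --- you supply the geometric disk argument and the orientation bookkeeping for those vanishing facts, which the paper simply asserts, and you package the symmetry consequences as the reusable identities (C)--(D) rather than invoking the relation ad hoc.
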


\begin{proof}
For any $A \in H_1(\Sigma;\ZZ)$, we have that
\begin{align*}
l_2(y_i,A)& = 0 &  l_2(A,z_j) &= 0
\end{align*}
Consequently, we easily see that
\[l_2(z_i,z_j) = l_2(x_i,z_j) = 0\]
Using the relation in Equation \ref{eq:linking-symmetry}, we then see that
\begin{align*}
l_2(z_i,x_j) &= l_2(x_j,z_i) + \langle x_j, z_j \rangle \\
&= 0 + \langle x_j, -x_i - Qy_i \rangle\\
&= \langle x_j, -Q_{i,j}y_j \rangle\\
&= - Q_{i,j}
\end{align*}
Furthermore, we have that $z_i + x_i = -Qy_i$.  Consequently
\[ l_2(z_i + x_i,z_j + x_j) =  0\]
Using the bilinearity relation, we obtain
\[l_2(z_i,z_j) + l_2(z_i,x_j) + l_2(x_i,z_j) + l_2(x_i,x_j) = 0\]
Using the above formulas, we see that
\[l_2(x_i,x_j) = -l_2(z_i,x_j) = Q_{i,j}\]
Next, we have
\[- Q_{i,j} = l_2(z_i,x_j) = l_2(-x_i,x_j) + l_2(-Qy_i,x_j) = -Q_{i,j} + l_2(Qy_i,x_j)\]
so $ l_2(Qy_i,x_j) = 0$ for all $i,j$.  This implies that $l_2(y_i,x_j) = 0$ and consequently $l_2(x_j,y_i) = \langle y_i,x_j \rangle = \delta_{i,j}$.
\end{proof}

\begin{proposition}
The linking form $l_3$ satisfies
\begin{align*}
l_3(z_i,z_j) &= 0&\ l_3(z_i,x_j) &= 0\\
l_3(x_i,z_j) &= Q_{i,j} & l_3(x_i,x_j) &= 0 \\
l_3(x_i,y_j) &= -\delta_{i,j} & l_3(y_i,y_j) &= Q^{-1}_{i,j}
\end{align*}
\end{proposition}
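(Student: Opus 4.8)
The proof should run parallel to that of Proposition~\ref{prop:linking-l2}, with the roles of the $\alpha$- and $\gamma$-handlebodies interchanged: now $Y_3 = H_\gamma \cup -H_\alpha$ in place of $H_\beta \cup -H_\gamma$.

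First I would record the two vanishing statements forced by the handlebody structure. Since $\{z_i\}$ is a basis for $L_\gamma$, each $z_i$ is represented by the boundary of a compressing disk in $H_\gamma$; pushing any representative of $A$ off $\Sigma$ in the positive normal direction and computing its linking against such a curve gives $l_3(z_i,A) = 0$ for every $A \in H_1(\Sigma;\ZZ)$ --- this is the analogue of $l_2(A,z_j) = 0$, with left and right exchanged because $H_\gamma$ is now the ``positive'' handlebody of the splitting. Symmetrically, since $\{x_i\}$ is a basis for $L_\alpha$, each $x_j$ bounds a compressing disk in $H_\alpha$, which gives $l_3(A,x_j) = 0$ for every $A$, the analogue of $l_2(y_i,A) = 0$. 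Together these immediately yield $l_3(z_i,z_j) = 0$, $l_3(z_i,x_j) = 0$, and $l_3(x_i,x_j) = 0$.

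Next I would get $l_3(x_i,z_j)$ from the symmetry relation in Equation~\ref{eq:linking-symmetry}:
\[ l_3(x_i,z_j) = l_3(z_j,x_i) + \langle z_j, x_i \rangle_\Sigma = \langle z_j, x_i \rangle_\Sigma , \]
using $l_3(z_j,x_i) = 0$. Since $z_j = -x_j - Qy_j$ and $\{x_i,y_i\}$ is a symplectic basis, $\langle z_j, x_i \rangle_\Sigma = -\langle Qy_j, x_i \rangle_\Sigma = Q_{i,j}$, by the same intersection-number computation as for $l_2$ (up to the antisymmetry of $\langle\cdot,\cdot\rangle_\Sigma$), so $l_3(x_i,z_j) = Q_{i,j}$.

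Finally, for the entries involving $y$'s I would use that $Q$ is unimodular, so the relation $z_i + x_i = -Qy_i$ inverts to express $y_j$ as an integral combination of the $x_k$ and $z_k$, namely $y_j = -\sum_k Q^{-1}_{j,k}(x_k + z_k)$. Bilinearity of $l_3$ together with the values already computed then gives
\[ l_3(x_i,y_j) = -\sum_k Q^{-1}_{j,k}\bigl( l_3(x_i,x_k) + l_3(x_i,z_k) \bigr) = -\sum_k Q^{-1}_{j,k} Q_{i,k} = -\delta_{i,j} \]
and
\[ l_3(y_i,y_j) = -\sum_k Q^{-1}_{i,k}\bigl( l_3(x_k,y_j) + l_3(z_k,y_j) \bigr) = -\sum_k Q^{-1}_{i,k}(-\delta_{k,j}) = Q^{-1}_{i,j} , \]
using that $Q$ and $Q^{-1}$ are symmetric and that $l_3(z_k,y_j) = 0$. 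The only point requiring care --- and the nearest thing to an obstacle --- is fixing the orientation convention for the positive pushoff so that the vanishing falls on the correct side, namely left for the $z_i$ (bounding in $H_\gamma$) and right for the $x_j$ (bounding in $H_\alpha$), and checking this is consistent with the convention used for $l_2$; once that is pinned down, everything else is the same routine linear algebra.
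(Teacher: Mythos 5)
Your proposal is correct and follows essentially the same route as the paper: the same two vanishing statements $l_3(z_i,A)=l_3(A,x_j)=0$ from the compressing disks in $H_\gamma$ and $H_\alpha$, the symmetry relation to get $l_3(x_i,z_j)=\langle z_j,x_i\rangle_\Sigma=Q_{i,j}$, and then linear algebra with unimodularity of $Q$ for the $y$-entries. The only (harmless) variation is at the end: you invert $z_i+x_i=-Qy_i$ to expand the $y_j$ directly, whereas the paper instead expands $l_3(x_i,z_j)$ and $l_3(z_i,z_j)=0$ by bilinearity to extract $l_3(x_i,y_j)=-\delta_{i,j}$ and $l_3(Qy_i,Qy_j)=Q_{i,j}$; these are the same computation organized differently.
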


\begin{proof}
As in the proof of Proposition \ref{prop:linking-l2}, we have that
\[l_3(z_i,A) = l_3(A,x_j) = 0\]
for any $A \in H_1(\Sigma;\ZZ)$.  We then easily obtain
\[l_3(z_i,z_j) = l_3(z_i,x_j) = l_3(x_i,x_j) = 0\]
It then follows that
\[l_3(x_i,z_j) = \langle z_j,x_i \rangle = Q_{i,j}\]
Now,
\[Q_{i,j} = l_3(x_i,z_j) = l_3(x_i,-x_j) + l_3(x_i,-Qy_j) = l_3(x_i,-Qy_j)\]
which implies that $l_3(x_i,y_j) = -\delta_{i,j}$.  Finally,
\begin{align*}
l_3(z_i,z_j) &= l_3(-x_i,-x_j) + l_3(-x_i,-Qy_j) + l_3(-Qy_i,-x_j) + l_3(-Qy_i,-Qy_j) \\
&= 0 - Q_{i,j} + 0 + l_3(Qy_i,Qy_j) \\
&= 0
\end{align*}
Thus $ l_3(Qy_i,Qy_j) = Q_{i,j}$.  Since $Q$ is the matrix for the linking form in the basis $\{Qy_i\}$, we see that $Q^{-1}$ is the matrix for the linking form in the basis $\{y_i\}$.
\end{proof}

\section{Rohlin's Theorem}

\begin{lemma}
\label{lemma:spin-cobordant}
Let $X$ be a closed, spin 4-manifold.  Then $X$ is spin-cobordant to a 4-manifold $X'$ such that
\begin{enumerate}
\item $X'$ has a handle decomposition without 1- or 3-handles, and
\item the intersection form of $X'$ is even and indefinite.
\end{enumerate}
In particular, $X'$ is spin and satisfies $\sigma(X') = \sigma(X)$.
\end{lemma}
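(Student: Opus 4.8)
The plan is to reach $X'$ from $X$ through three successive modifications, each realized by a spin cobordism, and then concatenate the cobordisms.

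First I would make $X$ simply connected by surgery on circles. Choose disjoint embedded circles $\gamma_1,\dots,\gamma_n$ in $X$ whose classes generate $\pi_1(X)$. A circle in an oriented $4$--manifold has trivial normal bundle, and of the two framings available for surgery, exactly one has the property that the trace of the surgery is a spin cobordism (the obstruction to extending the spin structure over the cocore $2$--handle lives in $H^2(D^2,\partial;\ZZ/2)$ and is toggled by the framing change); thus surgery on circles can be performed within the spin category. Since a circle has codimension $3$ in $X$, removing the $\gamma_i$ does not change $\pi_1$, and a Van Kampen computation shows that surgering $\gamma_i$ simply quotients $\pi_1$ by the normal closure of $[\gamma_i]$. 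Doing all of them yields a closed spin $4$--manifold $X_1$ with $\pi_1(X_1)=1$, together with a spin cobordism from $X$ to $X_1$.

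The heart of the argument is the second step: replacing $X_1$ by a spin--cobordant manifold that admits a handle decomposition without $1$-- and $3$--handles. A closed simply connected $4$--manifold need not itself have such a decomposition, so I would stabilize: for $m$ sufficiently large, $X':=X_1\,\#\,m(S^2\times S^2)$ is \emph{geometrically simply connected} in both directions, i.e. it and its upside--down decomposition each have no $1$--handles, which together means $X'$ admits a handle decomposition with no $1$-- and no $3$--handles. The mechanism is that simple connectivity makes the $2$--handles cancel the $1$--handles \emph{algebraically} (after handle slides one can normalize the relevant integer matrix to the identity), while each $S^2\times S^2$ summand supplies a pair of $2$--handles whose geometric duals let one promote algebraic cancellation to geometric cancellation (the Norman trick); the $3$--handles are dealt with dually, and connected sum with $S^2\times S^2$, being self--dual, never reintroduces $1$-- or $3$--handles. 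Crucially this stays in the spin cobordism class of $X$: $S^2\times S^2$ bounds the spin manifold $S^2\times D^3$, so it is null--spin--cobordant, and $X_1\,\#\,m(S^2\times S^2)$ is spin cobordant to $X_1\sqcup m(S^2\times S^2)$, hence to $X_1$.

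It remains to assemble the pieces. Concatenating the cobordisms from the two steps gives a spin cobordism from $X$ to $X'$, so $X'$ is spin and $\sigma(X')=\sigma(X)$ since the signature is a cobordism invariant, and $X'$ is simply connected because it has no $1$--handles. Its intersection form is $Q_{X_1}\oplus mH$, which is even because the intersection form of a closed spin $4$--manifold is even (Wu's formula $\langle x,x\rangle\equiv\langle w_2,x\rangle \bmod 2$) and $H$ is even; if this form happens to be definite, one more connected sum with $S^2\times S^2$ makes it indefinite while preserving all the other properties. The main obstacle is the stabilization result invoked in the second step; I expect the routine parts to be the spin--structure bookkeeping in the surgeries and the elementary cobordism invariance. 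I would take care that the stabilization input is the purely handle--theoretic statement, proved without any signature--divisibility, so that this proof of Rohlin's Theorem is not circular; in particular one must avoid the computation $\Omega_4^{\mathrm{Spin}}\cong\ZZ\langle K3\rangle$, which already contains Rohlin's Theorem.
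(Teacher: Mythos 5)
Your overall strategy (surgery on circles realized as spin cobordisms, plus an $S^2\times S^2$ summand for indefiniteness) is in the right spirit, and your first and third steps are fine, but the step you yourself flag as the heart --- that for $m$ large the stabilization $X_1\,\#\,m(S^2\times S^2)$ admits a handle decomposition with no 1- or 3-handles --- is not justified by the mechanism you sketch. Handle slides over the new hyperbolic pairs change the attaching word of a 2-handle only by multiplication by conjugates of the new (trivial) relators, so they do not by themselves reduce the geometric number of times an attaching circle runs over a 1-handle; the ``Norman trick'' with geometrically dual spheres is a tool for removing excess intersections of \emph{surfaces} in the 4-manifold, not for removing excess intersections of an attaching circle with a belt 2-sphere inside the 3-dimensional boundary of the 1-skeleton. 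Promoting algebraic to geometric cancellation in the closed manifold is exactly the kind of Andrews--Curtis-type difficulty that is not resolved by this sort of stabilization argument, so as written the key claim is a black box without a proof.

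The fix is to bypass the closed-manifold stabilization statement entirely, which is what the paper does: instead of first killing $\pi_1(X)$ and then trying to simplify the handle structure of the resulting closed manifold, attach the 5-dimensional 2-handles along the core circles of the 1-handles of $X$ themselves (with the spin framing). In Kirby-calculus terms this replaces each dotted circle by a 0-framed 2-handle, so the upper boundary of the cobordism has, by construction, a handle decomposition with no 1-handles; turning the decomposition upside down and repeating kills the 3-handles, and one further 5-dimensional 2-handle along a contractible loop gives the $S^2\times S^2$ summand making the (even, by Wu's formula) form indefinite. Note that this also shows your step 1 is redundant, and in fact it is the only proof I know of your stabilization claim: once $\pi_1=1$, surgery on the (now null-homotopic) 1-handle cores with the spin framing is precisely connected sum with copies of $S^2\times S^2$, so the statement you wanted to invoke is true, but its proof is the direct handle-trading cobordism argument rather than a handle-slide argument inside the closed manifold. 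Your concern about avoiding the computation of $\Omega_4^{\mathrm{Spin}}$ is well taken and is respected by this route.
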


\begin{proof}
Following the standard surgery theory trick, we can attach 5-dimensional 2-handles to kill the 1-handles of $X$.  Moreover, we can extend the spin structure across these 2-handles.  Turning the manifold upside down, we can also kill all of the 3-handles.  This results in some spin $X''$ that has no 1- or 3-handles.  By attaching one more 2-handle along a contractible loop, we get a spin cobordism to $X'' \# S^2 \times S^2$, which has an even, indefinite intersection form. 
\end{proof}

The connection between handle decompositions and trisections is given by the following result.

\begin{theorem}[\cite{GK}]
\label{thrm:trisection-existence}
Let $X$ be a closed, oriented 4-manifold that admits a handle decomposition with a single $0$-handle, $k_1$ 1-handles, $n$ 2-handles, and $k_3$ 3-handles.  Then $X$ admits a $(g;k_1,g-n,k_2)$ trisection for some $g$.
\end{theorem}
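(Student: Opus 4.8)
The plan is to construct the trisection directly from a handle decomposition, as in Gay--Kirby \cite{GK}; I only sketch the idea, since the real work is bookkeeping.

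\emph{Outer sectors.} Realize the given handle decomposition by a self-indexing Morse function $f\colon X\to[0,4]$. Then $Z_1:=f^{-1}[0,3/2]$ is a regular neighborhood of the $0$-handle together with the $k_1$ $1$-handles, hence $Z_1\cong\natural_{k_1}S^1\times B^3$; dually $Z_3:=f^{-1}[5/2,4]$ is built from the $4$-handle and the $k_3$ $3$-handles, so, read from the $4$-handle downward, $Z_3\cong\natural_{k_3}S^1\times B^3$. The middle region $W:=f^{-1}[3/2,5/2]$ is a cobordism from $\partial_+Z_1\cong\#_{k_1}S^1\times S^2$ to $\partial_-Z_3\cong\#_{k_3}S^1\times S^2$ obtained by attaching $n$ $2$-handles along a framed link $L$ with $n$ components. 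These will be the first and third sectors; what remains is to enlarge $Z_1$ and $Z_3$ slightly into $W$ so that what is left over, $Z_2$, is a $4$-dimensional $1$-handlebody, and to identify the central surface.

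\emph{The key step.} Isotope $L$ so that it lies on a genus-$g$ Heegaard surface $\Sigma$ for $\partial_+Z_1$, with the surface framing of every component equal to its $2$-handle framing, and in a position on $\Sigma$ dual to $n$ of the $g$ curves of a cut system of one of the two handlebodies of the splitting (so that each $2$-handle will cancel one $1$-handle). Any link can be pushed onto a Heegaard surface after stabilizing the splitting, and further stabilizations and handleslides correct the framings and achieve the required configuration; each such move raises $g$, so in the end $g$ may be taken arbitrarily large, in particular $g\ge n$. With $L$ so positioned, $\Sigma$ (pushed into the interior of $W$, across the $2$-handles) becomes the central surface, and one writes down its three cut systems: $\alphas$ compresses to the $Z_1$-side handlebody, $\gammas$ compresses to the $Z_3$-side handlebody, and $\betas$ is an intermediate system, so that $(\Sigma,\alphas,\betas)$ and $(\Sigma,\gammas,\alphas)$ are standard Heegaard diagrams for $\partial Z_1\cong\#_{k_1}S^1\times S^2$ and $\partial Z_3\cong\#_{k_3}S^1\times S^2$ respectively (the latter recording precisely the surgery along $L$), while $(\Sigma,\betas,\gammas)$ is a Heegaard diagram for $\#_mS^1\times S^2$ for some $m\ge0$. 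That $Z_2$ is a $1$-handlebody at all follows because, with $L$ in the chosen position, the $n$ $2$-handles of $W$ cancel $n$ of the $1$-handles of the $1$-handlebody onto which $Z_2$ deformation retracts.

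\emph{Fixing the middle genus, and the main obstacle.} The value $m=g-n$ is then forced, without a curve-by-curve check, by an Euler characteristic count: inclusion--exclusion for a $(g;k_1,m,k_3)$-trisection gives $\chi(X)=2+g-k_1-m-k_3$, whereas the handle decomposition gives $\chi(X)=2-k_1+n-k_3$; equating these yields $m=g-n$, and $g\ge n$ by the stabilizations above. The main obstacle is the key step: positioning the $2$-handle link on a Heegaard surface \emph{simultaneously} with the correct surface framing and the correct configuration relative to a cut system, and then verifying that the resulting enlargement of $Z_1$ and $Z_3$ into $W$ genuinely leaves all three sectors $1$-handlebodies. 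This framing-and-position bookkeeping is the substance of \cite{GK}.
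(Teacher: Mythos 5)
The paper does not prove this statement at all---it is imported verbatim from Gay--Kirby \cite{GK}---and your sketch follows exactly the construction of that cited proof: outer sectors from the $0/1$- and $3/4$-handles, the attaching link of the $2$-handles flattened (after stabilization) onto a Heegaard surface of $\#_{k_1}S^1\times S^2$ with surface framing equal to the handle framing and each component dual to a cut curve, so that the $2$-handles cancel $1$-handles of the thickened handlebody and the middle sector is a $1$-handlebody with $k_2=g-n$. Your outline is correct, and the one step you defer---simultaneously achieving the embedded position, the framing condition, and duality to a cut system via stabilizations and slides---is indeed precisely the technical content of \cite{GK}, so your attempt matches the (cited) proof in approach.
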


\begin{corollary}
\label{cor:trisection-gsc}
Suppose that a closed, smooth oriented 4-manifold $X$ admits a handle decomposition without 1- or 3-handles.  Then 
\begin{enumerate}
\item $X$ admits a $(g;0,k,0)$ trisection for some $g,k$, and
\item $X$ admits a $(g;k,0,0)$ trisection for some $g,k$.
\end{enumerate}
\end{corollary}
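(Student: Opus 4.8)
The plan is to derive both parts from Theorem \ref{thrm:trisection-existence}, together with the cyclic symmetry that is built into the definition of a trisection.

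First I would arrange that the given handle decomposition of $X$ has exactly one $0$-handle. This is automatic when $X$ is connected: with no $1$-handles the $2$-handles are attached along framed links lying in the boundary spheres of the $0$-handles, so no $2$-handle joins two distinct $0$-handles, and since there are no $3$-handles either, having more than one $0$-handle would force $X$ to be disconnected. (Dually, the decomposition then also has exactly one $4$-handle.) So the decomposition has the form: one $0$-handle, $n$ $2$-handles for some $n\ge 0$, and one $4$-handle. For part (1) I then apply Theorem \ref{thrm:trisection-existence} with $k_1 = k_3 = 0$, which produces, for some $g$, a $(g;0,g-n,0)$-trisection of $X$; setting $k = g-n \ge 0$ gives the desired $(g;0,k,0)$-trisection.

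For part (2) I use the cyclic symmetry of trisections, since Theorem \ref{thrm:trisection-existence} cannot directly produce a nonzero first index when $X$ has no $1$-handles. Let $X = Z_1 \cup Z_2 \cup Z_3$ be the $(g;0,k,0)$-trisection from part (1), with $H_\lambda = Z_{\lambda-1}\cap Z_\lambda$ and central surface $\Sigma = Z_1 \cap Z_2 \cap Z_3$. Relabel the sectors cyclically by $Z_1' = Z_2$, $Z_2' = Z_3$, $Z_3' = Z_1$. Then $H_\lambda' := Z_{\lambda-1}'\cap Z_\lambda' = H_{\lambda+1}$ is again a genus-$g$ handlebody, the triple intersection $Z_1'\cap Z_2'\cap Z_3' = \Sigma$ is unchanged, and $Z_\lambda' \cong \natural_{k_{\lambda+1}} S^1\times B^3$, so the parameters are cyclically permuted to $(g; k_2,k_3,k_1) = (g;k,0,0)$. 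Hence $Z_1'\cup Z_2'\cup Z_3'$ is a $(g;k,0,0)$-trisection of $X$.

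I do not anticipate a real obstacle: the only points needing care are the elementary bookkeeping that reduces to a single $0$-handle so that Theorem \ref{thrm:trisection-existence} applies, and the verification that the cyclic relabeling still satisfies all three trisection axioms (in particular that the double intersections remain handlebodies and the central surface is preserved), both of which are immediate from the definitions.
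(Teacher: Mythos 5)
Your proof is correct and follows essentially the same route as the paper: part (1) is a direct application of Theorem \ref{thrm:trisection-existence} (with the handle decomposition normalized to a single $0$-handle), and part (2) is obtained by cyclically permuting the sectors of that trisection. The extra bookkeeping you supply (connectedness forcing a single $0$-handle, and the check that the cyclic relabeling preserves the trisection axioms) is exactly what the paper leaves implicit.
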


\begin{proof}
The first statement follows directly from Theorem \ref{thrm:trisection-existence}, while the second follows from the first by cyclically permuting the sectors of the trisection.
\end{proof}

A pseudotrisection with intersection form $E_8$ is guaranteed by Proposition \ref{prop:pseudo-Q}.  An example of such a pseudotrisection (which is not obtained by the method of the proof of Proposition \ref{prop:pseudo-Q}, however), is given in Figure \ref{fig:pseudoE8}.

\begin{figure}[h!]
\centering
\includegraphics[width=.95\textwidth]{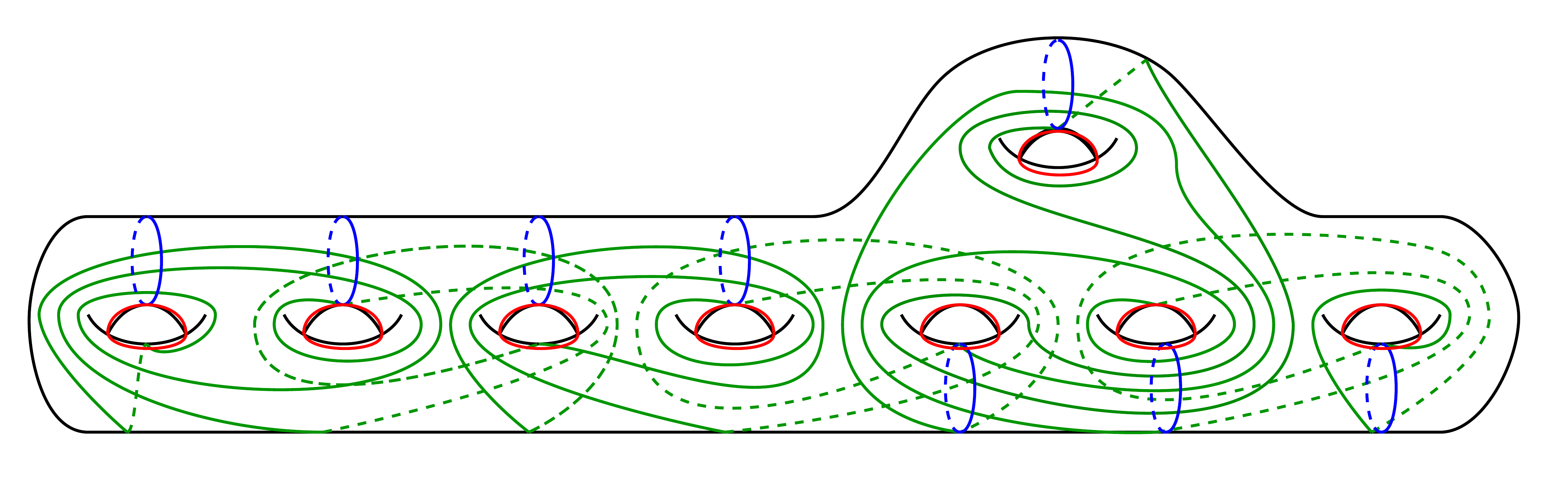}
\caption{A pseudotrisection with intersection form $E_8$.}
\label{fig:pseudoE8}
\end{figure}

\begin{proposition}
Let $(\Sigma,\alphas,\betas,\gammas)$ be the Heegaard triple in Figure \ref{fig:pseudoE8}.  Then $Y_{\alpha \beta} \cong Y_{\gamma \alpha} \cong S^3$ and $Y_{\beta \gamma}$ is the Poincar\'e homology sphere.
\end{proposition}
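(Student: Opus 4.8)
The plan is to verify the three homeomorphisms directly from the curves drawn in Figure~\ref{fig:pseudoE8}, reducing each of the Heegaard diagrams $(\Sigma,\alphas,\betas)$, $(\Sigma,\gammas,\alphas)$ and $(\Sigma,\betas,\gammas)$ to a recognizable standard form by handleslides, ambient isotopies, and destabilizations. The basic move is: if some curve of the first cut system meets exactly one curve of the second in exactly one point, that pair destabilizes, lowering the genus by one, and a genus-$g$ diagram admitting $g$ successive such destabilizations represents $S^3$. One useful preliminary, which isolates the real content, is a homology computation on the figure: once one checks that $(\Sigma,\alphas,\betas)$ is standard and that $[\gamma_i]=-[\alpha_i]-\sum_j (E_8)_{i,j}[\beta_j]$ in $H_1(\Sigma)$ (as in Proposition~\ref{prop:pseudo-tri-standardize}), the matrices $\langle\beta_i,\gamma_j\rangle_\Sigma$ and $\langle\gamma_i,\alpha_j\rangle_\Sigma$ are unimodular, so Proposition~\ref{prop:heegaard-ZHS} already forces $Y_{\beta\gamma}$ and $Y_{\gamma\alpha}$ to be integral homology spheres; it then remains only to pin down their homeomorphism types.

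For the two $S^3$ statements I would argue as follows. Inspecting Figure~\ref{fig:pseudoE8}, one locates a curve $\alpha_i$ which, after handleslides among the $\beta$-curves and an ambient isotopy, meets exactly one $\beta$-curve once and is disjoint from the rest; destabilizing this pair yields a genus-$7$ diagram of the same shape, and iterating eight times presents $(\Sigma,\alphas,\betas)$ as the genus-$0$ diagram of $S^3$, so $Y_{\alpha\beta}\cong S^3$. The identical argument applied to $(\Sigma,\gammas,\alphas)$ gives $Y_{\gamma\alpha}\cong S^3$. It is convenient to record the handleslides needed at each stage as elements of $\cH_g$, presented through bounding pairs and $k$-chains as in Lemma~\ref{lemma:3-chain-twist}; with this bookkeeping in hand the reductions are mechanical.

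For $Y_{\beta\gamma}=H_\beta\cup(-H_\gamma)$ the same procedure applies, but it now terminates at genus $2$ rather than $0$: I would perform six destabilizations of $(\Sigma,\betas,\gammas)$ and check that what remains is, after a final round of handleslides, the classical genus-$2$ Heegaard diagram of the Poincar\'e homology sphere, thereby identifying $Y_{\beta\gamma}$. Figure~\ref{fig:pseudoE8} is evidently arranged so that this succeeds. As independent cross-checks one could instead (i) convert the triple to a surgery presentation of $Y_{\beta\gamma}$ in $S^3$ via the Gay--Kirby dictionary (using that $(\alphas,\betas)$ is standard), observe that Equation~\ref{eq:q-formula} forces the resulting framed link to be Kirby-equivalent to the $E_8$-plumbing, and blow it down to a $(\mp 1)$-framed trefoil, whose surgery is the Poincar\'e sphere; or (ii) read a presentation of $\pi_1(Y_{\beta\gamma})$ off the $(\betas,\gammas)$-diagram --- one generator per handle of $H_\beta$, one relator per $\gamma$-curve --- and simplify it to the binary icosahedral group $\langle x,y,z \mid x^2=y^3=z^5=xyz\rangle$ of order $120$, which, being finite and nontrivial, determines the homeomorphism type.

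The mathematical content here is slight; the one real obstacle is bookkeeping --- tracking the eight $\gamma$-curves of Figure~\ref{fig:pseudoE8} accurately through the handleslides and destabilizations (equivalently, through the blow-downs collapsing the $E_8$-plumbing) without error. No individual step is deep, but the diagram is intricate, so the cleanest write-up fixes an explicit order of destabilizations, displays the intermediate diagrams, and, where slides are needed, names the corresponding elements of $\cH_g$.
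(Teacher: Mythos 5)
Your treatment of the two $S^3$ statements is fine, and in fact simpler than you make it: in Figure~\ref{fig:pseudoE8} each $\alpha_i$ already meets exactly one $\beta$-curve and exactly one $\gamma$-curve, each in a single point, so $(\Sigma,\alphas,\betas)$ and $(\Sigma,\gammas,\alphas)$ are geometrically standard diagrams for $S^3$; no handleslides or destabilizations are needed, and this observation is exactly the paper's argument.

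The gap is in the identification of $Y_{\beta\gamma}$. Your primary route (six destabilizations of $(\Sigma,\betas,\gammas)$ down to a genus-$2$ diagram of the Poincar\'e sphere) defers all of the content to an unperformed reduction: it is not evident from the figure that any $\beta$--$\gamma$ pair meets geometrically once and is disjoint from the remaining curves, so even the first destabilization requires prior work, and ``the figure is evidently arranged so that this succeeds'' is not a justification. More seriously, your cross-check (i), which is the route closest to the paper's actual proof, contains a false step: Equation~\ref{eq:q-formula} (equivalently, the linking matrix of the surgery link) does \emph{not} force the framed link to be Kirby-equivalent to the $E_8$-plumbing link. Two framed links with the same linking matrix can have non-homeomorphic surgeries (replace an unknotted component by a knotted one, for instance); the linking matrix only controls homology, which you had already used to see that $Y_{\beta\gamma}$ is a homology $3$-sphere. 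The missing geometric input --- and the actual content of the paper's proof --- is that, viewing $\Sigma$ as a Heegaard surface in $Y_{\alpha\beta}\cong S^3$, the eight $\gamma$-curves are visibly unknots whose surface framing is $+2$ and which link according to the $E_8$ Dynkin diagram; only with that identification in hand does the standard fact that surgery on this framed link yields the Poincar\'e homology sphere apply. Your cross-check (ii), reading a presentation of $\pi_1(Y_{\beta\gamma})$ off the diagram and reducing it to the binary icosahedral group, would be a legitimate alternative, but as written it too is only a plan, with the simplification of an eight-generator, eight-relator presentation left entirely to the reader.
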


\begin{proof}
Each $\alpha_i$ intersects a unique $\beta$-curve in a single point and a unique $\gamma$ curve in a single point.  This implies that the pairs $(\Sigma,\alpha,\beta)$ and $(\Sigma,\gamma,\alpha)$ are the standard Heegaard diagram for $S^3$.

Viewing $\Sigma$ as a Heegaard surface in $Y_{\alpha \beta}$, the surface framing of each $\gamma$ curve is $+2$ and the curves $\{\gamma_1,\dots,\gamma_8\}$ are a collection of unknots that link according to the $E_8$ Dynkin diagram.  It is well-known (e.g. \cite{KS-8faces}) that the result of surgery on this framed link results in the Poincar\'e homology sphere.
\end{proof}

We now have sufficient tools to prove Rohlin's Theorem

\begin{proof}[Proof of Theorem \ref{thrm:gsc-Rohlin}]

Fix $m,n,k \geq 0$ and set $g = 8m+2n+k$.  Then we can obtain a $(g;k,0,0)$ pseudotrisection $(\Sigma,\alphas,\betas,\gammas)$ with intersection form $Q = m E_8 \oplus n H$ by taking a connected sum of $m$ copies of the counterfeit $E_8$, $n$ copies of the standard trisection of $S^2 \times S^2$, and $k$ copies of the standard $(1;1,0,0)$ trisection of $S^4$.

Let $\phi \in \cK_g$ be any element of the Johnson kernel and let  $(\Sigma,\alphas,\betas,\phi(\gammas))$ be the resulting pseudotrisection.  We obtain 3-manifolds $Y_{2,\phi}$ and $Y_{3,\phi}$ as the union of handlebodies.  Since $\phi \in \cK_g$, it is the product of $r$ separating twists.  Consequently, $Y_{2,\phi}$ is obtained from $Y_2$ by surgery on a boundary link $L_2 = l_{2,1} \cup \dots \cup l_{2,r}$ and $Y_{3,\phi}$ is obtained from $Y_3$ by surgery on a boundary link $L_3 = l_{3,1} \cup \dots \cup l_{3,r}$.  The surgery formula for the Casson invariant implies that
\begin{align*}
\mu(Y_{2,\phi}) - \mu(Y_2) &= \sum_{i = 1}^r \text{Arf}(l_{2,i}) \\
\mu(Y_{3,\phi}) - \mu(Y_3) &= \sum_{i = 1}^r \text{Arf}(l_{3,i}) \\
\end{align*} 
But by Lemma \ref{lemma:same-Arf}, we have that $\text{Arf}(l_{2,i}) = \text{Arf}(k_{3,i})$ for all $i = 1,\dots,r$.  Consequently
\[ \mu(Y_{2, \phi}) + \mu(Y_{3,\phi}) = \mu(Y_2) + \mu(Y_3) = m \text{ mod }2.\]
Now, suppose that $X$ is any closed, smooth, oriented 4-manifold with intersection $Q$ and a $(8m+2n+k;k,0,0)$ trisection.  By Theorem \ref{thrm:torelli-trisection}, we can assume there is some element $\phi \in \cK_g$ such that $(\Sigma_g,\alphas,\betas,\phi(\gammas))$ is a trisection diagram for $X$.  Moreover, since this is an honest trisection decomposition, we have that $Y_{2,\phi} \cong Y_{3,\phi} \cong S^3$.  Therefore
\[m = \mu(Y_{2,\phi}) + \mu(Y_{3,\phi}) = 0 \text{ mod }2\]
\end{proof}

\bibliographystyle{alpha}
\nocite{*}
\bibliography{References}


\end{document}